\newcommand{\R}{\mathbb{R}}
\newcommand{\N}{\mathbb{N}}
\newcommand{\Z}{\mathbb{Z}}
\newcommand{\Q}{\mathbb{Q}}
\newcommand{\C}{\mathbb{C}}
\newcommand{\F}{\mathcal{F}}
\renewcommand{\H}{\mathcal{H}}
\newcommand{\curlyC}{\mathcal{C}}
\newcommand{\const}{\textnormal{const}}
\newcommand{\id}{\textnormal{id}}
\newcommand{\im}{\textnormal{im}}
\newcommand{\colim}{\textnormal{colim}}
\newcommand{\into}{\hookrightarrow}
\newcommand{\tr}{\textnormal{tr}}
\newcommand{\Map}{\textnormal{Map}}
\newcommand{\rel}{\textnormal{ rel }}
\newcommand{\Diff}{\textnormal{Diff}}
\renewcommand{\1}{{\bf 1}}
\newcommand{\hcob}{\textit{h}-cobordism }
\newcommand{\hcobs}{\textit{h}-cobordisms }
\renewcommand{\P}{\mathcal{P}}
\renewcommand{\Re}{\textnormal{Re}}
\renewcommand{\tilde}{\widetilde}
\renewcommand{\bar}{\overline}
\newtheorem{Lemma}{Lemma}[section]
\newtheorem{Theorem}[Lemma]{Theorem}
\newtheorem*{Theorem*}{Theorem}
\newtheorem{Proposition}[Lemma]{Proposition}
\newtheorem{Corollary}[Lemma]{Corollary}
\theoremstyle{definition}
\newtheorem{Axiom}[Lemma]{Axiom}
\newtheorem{Definition}[Lemma]{Definition}
\theoremstyle{remark}
\newtheorem{Remark}[Lemma]{Remark}
\date{\today}
\begin{document}

\title{Axioms for Higher Twisted Torsion Invariants}
\author{Christopher Ohrt}
\maketitle

\begin{abstract}
This paper attempts to investigate the space of various characteristic classes for smooth manifold bundles with local system on the total space inducing a finite holonomy covering. These classes are known as twisted higher torsion classes. We will give a system of axioms that we require these cohomology classes to satisfy. Higher Franz Reidemeister torsion and twisted versions of the higher Miller-Morita-Mumford classes will satisfy these axioms. We will show that the space of twisted torsion invariants is two dimensional or one dimensional depending on the torsion degree and spanned by these two classes. The proof will greatly depend on results on the equivariant Hatcher constructions developed in \cite{Hatcherconstruction}.
\end{abstract} 

\tableofcontents

\section{Introduction}

Higher torsion invariants have been developed by J. Wagoner, J. R. Klein, K. Igusa, M. Bismut, J. Lott, W. Dwyer, M. Weiss, E. B. Williams, S. Goette and many others (\cite{Igusa2}, \cite{0793.19002}, \cite{0837.58028}, \cite{1077.19002}, \cite{1071.58025}).\\
In his paper \cite{Igusa1} K. Igusa defined a higher torsion invariant to be a characteristic class $\tau(E)\in H^{4k}(B;\R)$ of a  smooth bundle $E\to B$ satisfying an additivity and a transfer axiom (see section 2 of \cite{Igusa1}). He proved that the set of higher torsion invariants forms a two dimensional vector space spanned by the higher Reidemeister torsion and the Miller-Morita-Mumford classes.\\
But higher Reidemeister torsion or Igusa Klein torsion can be defined in a more general way: It is a characteristic class $\tau^{IK}(E,\rho)\in H^{2k}(B;\R)$ for a  smooth bundle with an unitary representation $\rho:\pi_1 E\to U(m)$ factorizing through a finite group (See for example \cite{Igusa2}). For our purposes it will be better to look at finite complex local systems on $E$ instead. After a choice of a base point, this corresponds to a representation of the fundamental group as can be found for example in T. Szamuely's book \cite{1189.14002}. Regarding that, we will define a twisted higher torsion invariant to be a characteristic class $\tau(E;\F)\in H^{2k}(B;\R)$ for a finite local complex system $\F$ on $E$ inducing a finite holonomy covering satisfying six axioms: The first two are versions of the original two axioms for non-twisted torsion invariants, which will respect the local system. The next axiom will guarantee that the non-twisted torsion invariant $\tau(E; \1)$ obtained from a twisted torsion invariant by inserting the constant local system will be zero in degree $4l+2$, since there is no non-twisted torsion in these degrees. The remaining three axioms will determine the dependence of the torsion class on the local system. The third of these will be very specific and might be dropped based on a conjecture of Milnor (in \cite{Milnor}) we will discuss in section 5.\\
The goal of this paper is to show an analogous result to Igusa's on twisted torsion invariants. For this we will generalize Igusa's paper \cite{Igusa1} step by step: In the second chapter, we will define twisted higher torsion invariants. \\
In the third section, we will repeat why the Igusa Klein torsion $\tau^{IK}$ satisfies the axioms and introduce a twisted version of the Miller-Morita-Mumford classes $M^{2k}$ and show that these also satisfy the axioms. The MMM classes will be zero in degree $4l+2.$ Then we will state our main theorem:



\begin{Theorem}[Main Theorem]
The space of higher twisted torsion invariants in degree $4l$ on bundels with simple fiber and rationally simply connected base is two dimensional and spanned by the twisted MMM class and the twisted Igusa-Klein torsion, and one dimensional in degree $4l+2$ and spanned by the Igusa-Klein torsion. Especially, for any twisted torsion invariant $\tau$, there exists a unique $a\in \R$ and a (not necessarily unique) $b\in \R$, so that
	$$\tau=a\tau^{IK}+bM.$$
\end{Theorem}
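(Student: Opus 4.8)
The plan is to follow Igusa's strategy from \cite{Igusa1} adapted to the twisted setting, reducing the classification of twisted torsion invariants to a computation of a certain equivariant cohomology group via the equivariant Hatcher constructions of \cite{Hatcherconstruction}. First I would set up the universal situation: by the first two axioms (additivity and transfer), a twisted torsion invariant $\tau$ is determined by its values on a cofinal family of bundles, and via a stabilization/scanning argument it suffices to evaluate $\tau$ on bundles obtained from the equivariant Hatcher construction associated to a finite group $G$ (the holonomy group of the local system $\F$) acting on a disc bundle. The additivity axiom makes $\tau$ a homomorphism out of a suitable bordism-type group generated by such bundles, so the first real step is to identify that group — rationally — with the relevant piece of the homology of a classifying space for the equivariant Hatcher construction, on which the Igusa–Klein torsion and the twisted MMM class give two explicit functionals.

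The key steps, in order, would be: (1) Reduce via Axioms 1--2 to bundles with simple fiber over a rationally simply connected base, and show that $\tau$ factors through a linear functional on $H_{4l}$ (resp. $H_{4l+2}$) of the relevant classifying space with $\R$ coefficients; here the hypotheses "simple fiber'' and "rationally simply connected base'' are exactly what is needed to kill the error terms and make the reduction clean. (2) Use Axioms 4--6 (the three axioms governing dependence on the local system) to pin down how $\tau$ varies as $\F$ ranges over finite local systems with a fixed holonomy group $G$; this should reduce the problem to a single "universal'' local system, e.g. the one coming from the regular representation or a faithful representation of $G$. (3) Invoke the computation of the equivariant Hatcher construction from \cite{Hatcherconstruction} to show the target cohomology group is $2$-dimensional in degree $4l$ and $1$-dimensional in degree $4l+2$. (4) Check that $\tau^{IK}$ and $M$ are linearly independent in degree $4l$ (for instance by evaluating on a bundle where the MMM class is known to vanish while Igusa–Klein torsion does not, using Axiom 3 and section 3), and that $\tau^{IK}$ is nonzero in degree $4l+2$ while $M$ vanishes there; this gives that they span. (5) Conclude: given any $\tau$, subtract off the unique multiple $a\tau^{IK}$ forced by the degree-$(4l+2)$ component and the "rigid'' part of degree $4l$, leaving a class proportional to $M$, with $b$ determined only up to the kernel of the MMM functional — hence not necessarily unique.

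The main obstacle I expect is step (1)--(2): controlling the passage from the axioms to a genuine linear functional on the homology of the equivariant Hatcher classifying space. In the non-twisted case Igusa has a clean dictionary between the additivity/transfer axioms and the framed function / Hatcher-handle picture; here one must check that the twisted axioms interact correctly with the holonomy covering, in particular that the transfer axiom (Axiom 2) is strong enough to reduce arbitrary finite local systems to those induced from a fixed finite group, and that no new torsion-type invariant can appear that is sensitive to finer data of $\F$ than its class in the equivariant bordism group. The possibly-removable Axiom 3 (tied to Milnor's conjecture, section 5) is presumably what rigidifies the degree-$(4l+2)$ computation, so I would expect that axiom to be invoked precisely at the point of showing the relevant group is $1$-dimensional rather than $2$-dimensional there, and to flag that dependence explicitly in the proof.
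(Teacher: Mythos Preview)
Your plan has the right ingredients but assembles them in a way that does not match the paper and, more importantly, leaves the central step unproved. There is no single ``classifying space for twisted torsion invariants'' whose rational cohomology you can compute to be two- (resp.\ one-) dimensional; the paper never attempts such a computation, and I do not see how your step~(1) could be carried out. The actual argument is a vanishing argument for the \emph{difference torsion} $\tau^\delta:=\tau-a\tau^{IK}-bM$, and the logical skeleton is quite different from what you outline.

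Concretely, the paper proceeds as follows. First, the scalars $a,b$ are \emph{determined} on $S^1$-bundles: one shows that $\zeta\mapsto \tau(S^1(\lambda)/n,\F_\zeta)$ satisfies the Kubert identity, and Milnor's theorem on continuous Kubert functions forces this to be a multiple of the corresponding function for $\tau^{IK}$. This is where the continuity axiom (the one tied to Milnor's conjecture) is actually used; it is not, as you suggest, a device to cut the degree-$(4l+2)$ space from two dimensions to one. Second, one shows $\tau^\delta=0$ on sphere, disk, and linear lens space bundles by induction using additivity and transfer. Third, the equivariant Hatcher constructions from \cite{Hatcherconstruction} are invoked, but their role is narrower than you propose: they give explicit rational generators of the space of $h$-cobordism bundles of $L_n^{2N-1}$, on which $\tau^\delta$ can be checked to vanish; from this one deduces that $\tau^\delta$ is a \emph{fiber homotopy invariant}. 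Fourth --- and this is entirely missing from your plan --- one must show that a fiber-homotopy-invariant torsion vanishing on lens space bundles vanishes on \emph{all} bundles with simple fiber and rationally simply connected base. This requires the lens space suspension construction $\Sigma_n$ and an obstruction-theory argument to reduce the rational homology of the fiber, followed by an analysis of embedding/diffeomorphism spaces for fibers with trivial rational homology. None of your steps~(1)--(5) addresses this, and without it the argument cannot close: knowing the answer on Hatcher-type bundles alone does not determine $\tau$ on a general bundle.
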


The scalars $a$ and $b$ can be calculated as follows: For torsion in degree $4l$ we look at the universal line bundle $\lambda: ES^1\to \C\P^\infty.$ Since the cohomology of $H^{2k}(\C\P^\infty;\R)$ is one dimensional, the torsion invariant of the associated $S^1$-bundle $S^1(\lambda)$ and the associated $S^2$-bundle $S^2(\lambda)$ over $\C\P^\infty$ will determine the scalars $a$ and $b$. In degree $4l+2$ we only have to calculate $a$ by looking at a fiberwise quotient $S^1(\lambda)/(\Z/n)$ of the $n$-action on $S^1$. This admits a non-trivial finite complex local system and therefore has a non-trivial higher twisted torsion.\\
Before we prove the main theorem, we will extend an higher twisted torsion invariant to have values on bundles with vertical boundaries and then define a relative torsion for  bundle pairs (defined in section 4), which we will use to deconstruct any bundle into easier pieces and keep control over the torsion.\\
In the fifth section, we will show that the main theorem holds on $S^1$-bundles.\\
Then we will define the difference torsion to be
	$$\tau^\delta:=\tau-a\tau^{IK}-bM$$
and we will see that $\tau^\delta=0$ for every sphere, disk bundle and odd dimensional lens space bundle. In \cite{Hatcherconstruction} we give an explicit base for the space of \hcob bundles of a lens space and the calculations in this paper show that the difference torsion will be zero on these basis elements. From this crucial observation we can deduce that the difference torsion will be a fiber homotopy invariant, and in section 7 we will show that this fiber homotopy invariant must be trivial if it is restricted to bundles with rationally simply connected base and simple fiber.\\
This paper is the product of my work with Kiyoshi Igusa during my stay at the Brandeis University, Waltham, MA in the academic year 2011/2012 and lead up to further work on the equivariant Hatcher constructions with Thomas Goodwillie and Kiyoshi Igusa in the academic year 2012/2013. I want to thank Kiyoshi for the great support, advice, and guidance he offered me. I also want to thank Ulrich Bunke from my home university in Regensburg, Germany, for the many comments and corrections he contributed.

\section{Axioms and Definitions}

\subsection{Preliminaries}
Throughout the whole paper, let $F\into E\stackrel{p}{\to} B$ be a smooth fiber bundle, where $E$ and $B$ are compact smooth manifolds, $p$ is a smooth submersion, and $F$ is a compact orientable $n$-dimensional manifold with or without boundary. In the boundary case, there is a subbundle $\partial F\to \partial^v E\to B$ of $E$. We call $\partial^vE$ the vertical boundary of $E$. We assume that $B$ is connected and that the action of $\pi_1B$ on $F$ preserves the orientation of $F$. We also assume that $\pi_1 B$ is finite, which immediatly implies that the bundle $E$ is unipotent (as required in \cite{Igusa1}).

These are all similiar assumptions to the ones for considering non-twisted higher torsion classes. Additionally to those, we assume that $E$ comes equipped with a finite complex local system $\F.$ By "`finite"' we mean that there exists a finite covering $\tilde E\to E$ such that the pull-back of the local system is trivializable. These local systems are sometimes also called hermitian local coefficient systems because they induce a well defined hermitian inner product on each fiber. We will often call $\F$ just local coefficient system.\\
Now we repeat another construction from Igusa's paper \cite{Igusa1}:\\
Let $T^vE$ denote the vertical tangent bundle of $E$. This is the subbundle of the tangent bundle $TE$ of $E$ consisting of all tangent vectors mapping to zero in $TB$, that is, $T^vE$ is the kernel of $Tp:TE\to TB$. The Euler class
	$$e(E)\in H^n(E;\Z)$$
of the bundle $E$ is defined to be the usual Euler class of $T^vE$.\\
The transfer on oriented bundles
	$$tr_B^E:H^*(E;\Z)\to H^*(B;\Z)$$
is given by
	$$tr_B^E(x)=p_*(x\cup e(E)),$$
where 
	$$p_*:H^{*+n}(E;\Z)\to H^*(B;\Z)$$
is the push-down operator or Umkehr map. Over $\R$, it is given as the composition of two maps
	$$H^{k+n}(E;\R)\to H^k(B;H^n(F;\R))\to H^k(B;\R)$$
where the first map comes from the Serre spectral sequence of the bundle and the second map is induced by the coefficient map $H^n(F;\Z)\to \Z,$ given by evaluating on the orientation class of the fiber. For details see \cite{0976.57026} or \cite{Igusa3}.\\
If the orientation of the fiber $F$ is reversed, both $e(E)$ and $p_*$ change sign. Thus, the transfer is independent of the choice of orientation of $F$. For the basic properties of the transfer, see \cite{0306.55017}. The main property that we need is that, for closed fibers $F$,
	$$tr_B^E=(-1)^ntr_B^E.$$
So, rationally, $tr_B^E=0$ if $n=\dim F$ is odd.

\subsection{Higher Twisted Torsion Invariants}

Now we are ready to give the definition of a twisted higher torsion invariant. Most of the axioms were proposed by K. Igusa in \cite{Igusa4}.

\begin{Definition} A higher twisted torsion invariant in degree $2k$ with $k\in \N$ is a rule $\tau_k$, which assigns to any bundle $F\into E\to B$ with closed fiber $F$ and local coefficient system $\F$ on $E$ a cohomology class $\tau_k(E,\F)\in H^{2k}(B;\R)$ subject to the following Axioms. We will drop the degree out of the notation most of the time and just write $\tau$.
\end{Definition}

\begin{Axiom}[Naturality]
$\tau_k$ is a characteristic class in degree $2k$. That means for a map $f:B'\to B$ and a bundle $F\into E\to B$ with local coefficient system $\F$ on $E$ we have 
	$$\tau_k(f^*(E),f^*\F)=f^*\tau(E,\F)\in H^{2k}(B';\R),$$
where $f^*$ denotes the pull-back along $f$.\\
\end{Axiom}

\begin{Remark} The naturality axiom immediately implies triviality on trivial bundles $\tau_k(B\times F,\F)=0$, if $\F=\1$ is the constant local system. Furthermore, if $B$ is simply connected, the local system $\F$ on $B\times F$ will pull-back from a local system $\F_F$ on $F$ under the projection $B\times F\to F.$ Now we can look at an $F$-bundle $E\to B'$ with local system $\F_E$ which induces the local system $\F_F$ on the fiber $F_*$ over the base point $*\in B'.$ If we pull back $E$ along the trivial map $\const_*:B \to B',$ we get the trivial bundle $B\times F\to B$ and the local system $\F_E$ will induce the local system $\F$ on $B\times F.$ Thereby we see that $\tau(B\times F,\F)=0$ for all local systems $\F$ on $B\times F$ as long as $B$ is simply connected.
\end{Remark}

Let $E_1$ and $E_2$ be bundles over $B$ with local coefficient systems $\F_1$ and $\F_2$, such that there is an isomorphism $\phi:\partial^v E_1\to\partial^v E_2\neq \emptyset$ and we have for the restrictions of the local systems
	$$\F_{|\partial^vE_1}\cong\phi^*\F_{|\partial^vE}.$$
Then we can glue them together to a local coefficient system $\F:=\F_1\cup_\phi \F_2$ on $E_1\cup_\phi E_2$. 

\begin{Axiom}[geometric additivity]
In the setting from above we have for any twisted torsion invariant $\tau$
	$$\tau(E_1\cup_\phi E_2,\F)=\frac 1 2 (\tau(DE_1,\F_1^l\cup_\id\F_1^r)+\tau(DE_2,\F_2^l\cup_\id\F_2^r)),$$
where $DE_i$ denotes the fiberwise double $E_i^l\cup_\id E_i^r$ with a left copy $E_i^l$ and a right copy $E_i^r$ glued together along their isomorphic boundaries and the induced local coefficient system $\F_i^l\cup_\id\F_i^r.$
\end{Axiom}

Now suppose again that $p:E\to B$ is a  bundle with closed fiber $F$ and local coefficient system $\F$ on $E.$ Let $q:D\to E$ be a $S^n$-bundle which is isomorphic to the sphere bundle of a vector bundle. We get the local coefficient system $q^*\F$ on $D$ by pulling back $\F$ along $q.$

\begin{Axiom}[geometric transfer]
In the situation above, for a twisted torsion invariant $\tau$, we have the following relation between the torsion class $\tau_B(D,q^*\F)\in H^{2k}(B;\R)$ of $D$ as a bundle over $B$  and the torsion class $\tau_E(D,q^*\F)\in H^{2k}(E;\R)$ of $D$ as a bundle over $E$:
	$$\tau_B(D,q^*\F)=\chi(S^n)\tau_B(E,\F)+\tr_B^E(\tau_E(D,q^*\F)),$$
where $\chi$ denotes the Euler class, $\tr_B^E:H^{2k}(E;\R)\to H^{2k}(B;\R)$ the trace, and $\tau_E(D,q^*\F)$ the twisted torsion class of $D$ over $E$.
\end{Axiom}

\begin{Remark} 
We have $\chi(S^n)=2$ or $0$ depending on whether $n$ is even or odd.
\end{Remark}

\begin{Remark}
If we take a twisted torsion class $\tau_{2k}$, we will get a non-twisted torsion class 
	$$\tau_{\textnormal{non-tw.}}(E):=\tau(E,\1)\in H^{4k}(B;\R),$$
where $E\to B$ is a bundle and $\1$ the constant local system on $E.$ We will denote this non-twisted torsion invariant simply by $\tau(E)$ without any local system in the argument.
\end{Remark}
Since there are no higher torsion invariants in degree $4l+2=2k,$ we also need the following Axiom:

\begin{Axiom}[triviality]
For a twisted torsion invariant in degree $4l+2,$ we have for every bundle $E\to B$ and the constant local system $\1$ on $E$
	$$\tau(E,\1)=0\in H^{4l+2}(B;\R).$$
\end{Axiom}

These axioms so far were only modifications of the axioms for non-twisted torsion invariants. We also need some axioms concerning the local system $\F$ on $E:$

\begin{Axiom}[additivity for coefficients]
If $\F=\bigoplus_i\F_i$ for local systems $\F_i$ on $E$ and a bundle $E\to B$, we have for every twisted torsion invariant $\tau$
	$$\tau(E,\F)=\sum_i\tau(E,\F_i).$$
\end{Axiom}

\begin{Axiom}[transfer/induction for coefficients]
If $\tilde E\to B$ and $E\to B$ are bundles and $\pi:\tilde E\to E$ is a finite fiberwise covering, then we have for every local system $\F$ on $\tilde E$
	$$\tau(\tilde E, \F)=\tau(E,\pi_*\F),$$
where $\pi_*$ denotes the push-down operator for local systems.

\end{Axiom}

\begin{Remark} K. Igusa proposed this axiom originally in the following form (see \cite{Igusa4}), which corresponds to our formulation:\\
If $G$ is a group that acts freely and fiberwise on $E\to B,$ $H$ is a subgroup of $G,$ and $V$ is a unitary representation of $H$, then the torsion of the orbit bundles $E/G,\,E/H\to B$ are related by
	$$\tau(E/G, Ind_H^GV)=\tau(E/H, V).$$
\end{Remark}

Besides these two, we also need a very specific continuity axiom stated in section 5.2, which we will only use once. Following a conjecture of Milnor, we should be able to drop this axiom. We will discuss this matter, as we get to it. We will also state the axiom in its specific form at the one point, where we will use it.

\section{Statement}

\subsection{Examples of Twisted Higher Torsion Invariants}

There are two prominent examples for higher torsion. The first one is the higher Franz Reidemeister torsion or Igusa-Klein torsion 
	$$\tau^{IK}_k(E,\partial_0 E,\F)\in H^{2k}(B;\R),$$
which is defined for any unipotent bundle pair $(F,\partial_0 F)\to (E,\partial_0 E)\to B$ with $\partial_0 E\subseteq \partial^v E$ and local system $\F$ on $E$ (for details, see \cite{Igusa2}).\\
K. Igusa proved the following result in \cite{Igusa3}:
\begin{Theorem}
Igusa-Klein torsion invariants are higher twisted torsion invariants for  bundles with closed fibers.
\end{Theorem}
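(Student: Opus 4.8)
The plan is to verify the six axioms one at a time, relying on the established foundational properties of Igusa--Klein torsion (naturality, additivity, transfer, and behaviour under finite coverings) that are developed in \cite{Igusa2} and \cite{Igusa3}. First I would recall that $\tau^{IK}_k$ is a characteristic class of the bundle-with-coefficients, so the \textbf{Naturality} axiom is immediate from the construction: the filtered Whitehead space or the framed function / higher Reidemeister construction used to define $\tau^{IK}$ is functorial under pull-back, and pulling back a finite local system along $f:B'\to B$ pulls back the associated acyclic complex, hence the torsion cocycle. The \textbf{triviality} axiom for degree $4l+2$ is the statement that the \emph{non-twisted} Igusa--Klein torsion vanishes rationally in degrees $\equiv 2 \pmod 4$; this is classical (the higher Reidemeister torsion classes $\tau_{4k}$ live only in degrees divisible by $4$), so $\tau^{IK}_{4l+2}(E,\1)=0$.

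Next I would treat the two coefficient axioms. \textbf{Additivity for coefficients} $\tau^{IK}(E,\F_1\oplus\F_2)=\tau^{IK}(E,\F_1)+\tau^{IK}(E,\F_2)$ follows because the fiberwise twisted chain complex of a direct sum of local systems is the direct sum of the twisted complexes, and higher torsion is additive on short exact sequences (in particular direct sums) of acyclic complexes --- this is one of the defining structural properties in Igusa's work. The \textbf{transfer/induction for coefficients} axiom $\tau^{IK}(\tilde E,\F)=\tau^{IK}(E,\pi_*\F)$ for a finite fiberwise covering $\pi:\tilde E\to E$ is exactly the assertion that pushing a local system down along a finite covering leaves the twisted torsion unchanged; one reduces to the model case of an actual free $G$-action (the $\Ind_H^G$ formulation in the Remark) and uses that the $\pi_*\F$-twisted chains on $E$ are canonically the $\F$-twisted chains on $\tilde E$, so the two torsions are computed from isomorphic complexes. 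Both of these are stated (in the orbit-bundle language) as known properties of $\tau^{IK}$ in \cite{Igusa4} and \cite{Igusa3}.

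The substantive work is in the two \emph{geometric} axioms. For \textbf{geometric additivity}, I would use Igusa's additivity theorem for higher Franz--Reidemeister torsion along a fiberwise cut: when $E=E_1\cup_\phi E_2$ is glued along $\partial^v E_1\cong\partial^v E_2$, the relative torsions satisfy $\tau^{IK}(E,\F)=\tau^{IK}(E_1,\partial^vE_1,\F_1)+\tau^{IK}(E_2,\partial^vE_2,\F_2)$ up to the boundary correction term, and then identify the relative torsion of $(E_i,\partial^v E_i)$ with half the torsion of the fiberwise double $DE_i$ by applying the same additivity to $DE_i=E_i^l\cup E_i^r$ together with the involution that swaps the two copies (which forces the two summands to be equal). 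For the \textbf{geometric transfer} axiom, with $q:D\to E$ the sphere bundle $S(V)$ of a vector bundle over $E$ and $D$ viewed as a bundle over $B$, I would invoke the transfer formula for Igusa--Klein torsion: fiberwise over $E$ the sphere $S^n$ contributes the Euler-characteristic multiple $\chi(S^n)\,\tau^{IK}_B(E,\F)$ coming from the "base" part of the fibration $S^n\to D\to E\to B$, while the genuinely new part over $E$ contributes $\tr_B^E\bigl(\tau^{IK}_E(D,q^*\F)\bigr)$ via the push-down $p_*(\,\cdot\,\cup e(E))$ appearing in the Serre spectral sequence of the composite fibration. This is precisely the twisted analogue of the transfer axiom Igusa verifies for non-twisted torsion in \cite{Igusa1}, and the local system passes through because $q^*\F$ is pulled back from $\F$ and the transfer is defined integrally on the level of the spectral sequence.

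The \textbf{main obstacle} is the geometric transfer axiom: one must check that the Leray--Hirsch / spectral-sequence bookkeeping for the iterated fibration $S^n\to D\to E\to B$ correctly splits the twisted torsion into the "$\chi(S^n)$ times base" term and the "transfer of fiberwise torsion" term, and in particular that the contribution of the $S^n$-directions to the $\F$-twisted torsion is insensitive to the twisting (which is reasonable since $q^*\F$ is constant along each sphere fiber, being pulled back from $E$). I expect this to follow from Igusa's existing transfer computations essentially verbatim once the coefficients are carried along, but it is the step where the geometric input — rather than a formal manipulation — is genuinely needed.
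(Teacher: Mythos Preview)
The paper does not actually give its own proof of this theorem: it simply attributes the result to Igusa and cites \cite{Igusa3} (``K.~Igusa proved the following result in \cite{Igusa3}''). Your axiom-by-axiom verification is therefore not something to compare against a proof in the paper, but rather an elaboration of what the paper defers to the literature. The sketch you give is consistent with the approach in the cited references: naturality from functoriality of the construction, additivity and transfer for coefficients from the chain-level identifications, geometric additivity from Igusa's gluing formula, and geometric transfer from the iterated-fibration formula.

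One omission worth flagging: you verify only six axioms, but the paper also requires the \emph{continuity} axiom (introduced in Section~5.2) as part of the definition of a twisted torsion invariant. For $\tau^{IK}$ this is not difficult---the explicit polylogarithm formula $\tau^{IK}(S^1(\lambda)/n,\F_{\zeta_n})=-n^kL_{k+1}(\zeta_n)\,ch_{2k}(\lambda)$ shows that the associated function $f_{\tau^{IK}}$ on $\Q/\Z$ is continuous---but strictly speaking it should be mentioned. Otherwise your outline is correct and, in particular, your identification of the geometric transfer axiom as the place where genuine geometric input is needed (and your observation that $q^*\F$ is constant along the sphere fibers, so the twisting does not interfere with the $S^n$-contribution) is on target.
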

Besides this torsion, we also have the Miller-Morita-Mumford classes in degree $4l$ with $l\in \N$
	$$M^{2l}(E):=\tr_B^E((2l!)ch_{4l}(T^vE)),$$
where $ch_{4l}(T^vE)=\frac 1 2 ch_{4l}(T^vE\otimes \C)$. We will consider this to be a real characteristic class. K. Igusa also showed that this class is a higher non-twisted torsion invariant (see \cite{Igusa1}). To make it a higher twisted torsion invariant we simply define for a $m-$dimensional local system $F$ on $E$
	$$M^{2l}(E,\F):=mM^{2l}(E)\in H_{4l}(B;\R).$$
Furthermore we set
	$$M^{2l+1}(E,\F):=0,$$
since there is no non-twisted torsion in degree $2k=2(2l+1),$ and the twisted MMM torsion always induces non-trivial non-twisted torsion. Knowing that the MMM class is a non-twisted torsion invariant as shown in \cite{Igusa1} (and therefore fulfills the first three axioms) it is now easy to see:
\begin{Theorem}
The twisted MMM class is a higher twisted torsion invariant.
\end{Theorem}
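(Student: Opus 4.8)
The plan is to verify the six axioms for $M^{2l}(E,\F):=m\,M^{2l}(E)$ (in degree $4l$) and $M^{2l+1}(E,\F):=0$ (in degree $4l+2$) one at a time, using as input the already-cited fact from \cite{Igusa1} that the untwisted class $M^{2l}(E)=\tr_B^E((2l!)ch_{4l}(T^vE))$ satisfies the first three (untwisted) torsion axioms. The degree $4l+2$ case is immediate: the zero rule trivially satisfies naturality, geometric additivity, geometric transfer (both sides of the transfer formula vanish), triviality, coefficient additivity, and the transfer/induction axiom, so I would dispose of it in one line. For the degree $4l$ case, the key observation is that the vertical tangent bundle $T^vE$, its Euler class $e(E)$, and the Umkehr map $p_*$ are all built purely from the bundle $E\to B$ and do not see the local system $\F$; the local system enters only through the multiplicative factor $m=\dim_\C\F$.

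First I would check Naturality: for $f:B'\to B$ one has $f^*T^vE=T^v(f^*E)$ and $f^*$ commutes with the transfer (this is part of the untwisted statement), and $\dim_\C(f^*\F)=\dim_\C\F=m$, so $M^{2l}(f^*E,f^*\F)=m\,f^*M^{2l}(E)=f^*M^{2l}(E,\F)$. For geometric additivity and geometric transfer, I would simply factor out $m$: since $M^{2l}(E_i,\F_i^l\cup_\id\F_i^r)=m\,M^{2l}(DE_i)$ and the glued coefficient system again has rank $m$, the twisted additivity formula is $m$ times the untwisted one, which holds by \cite{Igusa1}; likewise, because $q^*\F$ has the same rank $m$ as $\F$ over $E$, the twisted transfer identity $\tau_B(D,q^*\F)=\chi(S^n)\tau_B(E,\F)+\tr_B^E(\tau_E(D,q^*\F))$ is exactly $m$ times the untwisted transfer identity for $M^{2l}$. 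The triviality axiom is vacuous in degree $4l$ (it only constrains degree $4l+2$).

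The genuinely new content is in the last two axioms. For additivity for coefficients, if $\F=\bigoplus_i\F_i$ then $m=\dim_\C\F=\sum_i\dim_\C\F_i=\sum_i m_i$, so $M^{2l}(E,\F)=m\,M^{2l}(E)=\sum_i m_i\,M^{2l}(E)=\sum_i M^{2l}(E,\F_i)$. For transfer/induction for coefficients, given a finite fiberwise covering $\pi:\tilde E\to E$ of degree $d$ and a local system $\F$ on $\tilde E$, I would note two things: first, the pushforward $\pi_*\F$ on $E$ has rank $\dim_\C(\pi_*\F)=d\cdot\dim_\C\F$; second, since $\pi$ is a fiberwise covering, $T^v\tilde E=\pi^*T^vE$, hence $ch_{4l}(T^v\tilde E)=\pi^*ch_{4l}(T^vE)$ and, by the projection formula / multiplicativity of the transfer under composition of submersions, $\tr_B^{\tilde E}(\pi^*x)=d\cdot\tr_B^E(x)$; therefore $M^{2l}(\tilde E)=d\cdot M^{2l}(E)$. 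Combining, $M^{2l}(\tilde E,\F)=(\dim_\C\F)\,M^{2l}(\tilde E)=(\dim_\C\F)\,d\,M^{2l}(E)=(d\dim_\C\F)\,M^{2l}(E)=\dim_\C(\pi_*\F)\,M^{2l}(E)=M^{2l}(E,\pi_*\F)$, as required.

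The only step that needs genuine care — and hence the main obstacle — is the identity $\tr_B^{\tilde E}(\pi^*x)=d\cdot\tr_B^E(x)$ for a degree-$d$ fiberwise covering $\pi$, together with the compatible statement $e(\tilde E)=\pi^*e(E)$; both follow from $T^v\tilde E=\pi^*T^vE$ and the behaviour of the Umkehr map under the factorization $\tilde E\to E\to B$, but one must track the fiber dimensions and orientations (the fiber of $\tilde E\to B$ is a $d$-sheeted cover of the fiber of $E\to B$, so they have the same dimension $n$, and the push-down over $B$ of a class pulled back from $E$ picks up the extra factor $d$ from integration over the covering fibers of $\pi$). Everything else is bookkeeping with the scalar $m$.
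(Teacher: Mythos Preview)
Your proposal is correct and follows essentially the same route as the paper: the first three axioms are inherited from the untwisted case by factoring out the scalar $m=\dim_\C\F$, triviality and coefficient additivity are immediate from the definition, and the transfer/induction axiom is reduced to the identity $M^{2l}(\tilde E)=d\cdot M^{2l}(E)$ for a $d$-fold fiberwise covering, which both you and the paper prove via $T^v\tilde E=\pi^*T^vE$ together with the push-down relation $p_*^{\tilde E}\circ\pi^*=d\cdot p_*^E$. The one item you omit is the continuity axiom (introduced only later, in Section~5.2); the paper disposes of it in a single line by observing that $M$ does not depend on the local system beyond its rank, so $f_\tau$ is locally constant and hence continuous --- you should add that remark for completeness.
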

\begin{proof} Triviality is obvious, since the whole class is trivial in dimension $2k=2(2l+1).$ Additivity follows from the fact that the MMM class has a coefficient depending linearly on the dimension of the local system. We have not introduced the continuity axiom yet, but it will require the torsion class to depend continuously on the local system. It is met by the MMM class, since a transfer of coefficients leaves the MMM class constant.\\

So it only remains to show the transfer for coefficients axiom. Take two bundles $\tilde E\to B$ and $E\to B$ so that $\pi:\tilde E\to E$ is aN $n$-fold fiberwise covering. Assume we have a $m$-dimensional local system $\F$ on $\tilde E.$ Since the MMM class only takes the dimension of the local system in account we have
	$$M(E,\pi_* \F)=mnM(E)$$
and since $M(\tilde E,\F)=mM(\tilde E)$ it remains to show
	$$M(\tilde E)=nM(E).$$
Let us first recall the definition of the MMM class as 
	$$M^{4l}(E)=tr_B^E((2l!)ch_{4l}(T^vE)),$$
where $tr_B^E(x)=p_*(x\cup e(T^vE))$ with the push-down operator $p_*:H^{*+l}(E;\Z)\to H^*(B;\Z)$ where $l$ is the dimension of $F.$ Now we have the $n$-fold covering $\pi:\tilde E\to E$ and the following pull-back diagram:
	$$\xymatrix{
			T^v\tilde E\ar[r]\ar[d]		&	T^vE\ar[d]\\
			\tilde E\ar[r]^\pi				&	E.
		}
	$$
By naturality, this implies $ch_{4l}(T^v\tilde E)=\pi^*ch_{4l}(T^vE)$ and $e(T^v\tilde E)=\pi^*e(T^vE).$ Furthermore we have the following commutative diagram relating the push-down operators for $\tilde E$ and $E:$
	$$\xymatrix{
			H^{*+l}(\tilde E;\Z)\ar[dr]_{p_*}	&	&	H^{*+l}(E;\Z)\ar[ll]^{\pi^*}\ar[dl]^{n\cdot p_*}\\
			&	H^*(B;\Z).
		}
	$$
Putting everything together we calculate
	\begin{eqnarray*}
		M^{2l}(\tilde E)	&	=	&	p_*(ch_{4l}(T^v \tilde E)\cup e(T^v\tilde E))\\
											&	=	&	p^*(\pi^*(ch_{4l}(T^v(E))\cup e(T^vE)))\\
											&	=	&	np^*(ch_{4l}(T^vE)\cup e(T^vE))\\
											&	=	&	nM^{4l}(E)
	\end{eqnarray*}
and this completes the proof.
\end{proof}

Now we know that for any bundle $F\to E\to B$ with closed $l$-dimensional fiber $F,$ twice the transfer map $\tr_B^E$ is zero, if $l$ is odd. Therefore we get

\begin{Proposition}
$M^k(E,\F)=0$ for closed odd dimensional fiber $F$.
\end{Proposition}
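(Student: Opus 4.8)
The plan is to treat the two parities of the torsion degree $k$ separately. If $k=2l+1$ is odd, then $M^k(E,\F)=0$ holds by the very definition of the twisted MMM class, so there is nothing to prove in that case. If $k=2l$ is even, then by definition $M^{2l}(E,\F)=mM^{2l}(E)=m\,\tr_B^E((2l!)ch_{4l}(T^vE))$ with $m=\dim\F$, so it suffices to show that the transfer $\tr_B^E$ kills the class $(2l!)ch_{4l}(T^vE)\in H^{4l}(E;\R)$; in fact it suffices to show that $\tr_B^E=0$ on all of $H^{*}(E;\R)$.

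For this I would invoke the sign identity for the transfer on closed fibers recalled in the preliminaries: since reversing the orientation of $F$ reverses the sign of both $e(E)$ and of the push-down $p_*$, one obtains $\tr_B^E=(-1)^n\tr_B^E$ as maps $H^{*}(E;\R)\to H^{*}(B;\R)$, where $n=\dim F$. As $F$ is assumed odd-dimensional, this reads $\tr_B^E=-\tr_B^E$, hence $2\tr_B^E=0$; and since we work with real coefficients the factor $2$ is invertible, so $\tr_B^E=0$ identically. Plugging in $x=(2l!)ch_{4l}(T^vE)$ then gives $M^{2l}(E)=\tr_B^E(x)=0$, whence $M^{2l}(E,\F)=mM^{2l}(E)=0$. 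Combined with the odd case, this yields $M^k(E,\F)=0$ for every $k$ whenever $F$ is closed and odd-dimensional.

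There is no serious obstacle here: the proposition is an immediate consequence of the vanishing of the real transfer for odd-dimensional closed fibers, which was essentially already observed in the text just above the statement. The only subtlety worth flagging explicitly is that the relation $2\tr_B^E=0$ forces $\tr_B^E=0$ precisely because the MMM class — like the torsion classes in general — takes values in real rather than integral cohomology.
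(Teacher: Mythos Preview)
Your proposal is correct and follows essentially the same approach as the paper: the paper does not give a formal proof but simply observes, in the sentence immediately preceding the proposition, that twice the transfer $\tr_B^E$ vanishes for closed odd-dimensional fibers (rationally), and states the proposition as a direct consequence. Your write-up spells out the two cases and the 2-torsion argument over $\R$ explicitly, which is exactly the intended reasoning.
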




\subsection{The Space of Twisted Torsion Invariants}

Now we are moving on to the space of higher twisted torsion invariants in degree $2k$. We begin with the following elementary observation:

\begin{Lemma}
For each $k$, the set of all twisted torsion invariants of degree $2k$ is a  vector space over $\R.$
\end{Lemma}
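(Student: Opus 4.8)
The plan is to observe that every condition in the definition of a twisted higher torsion invariant is a homogeneous $\R$-linear identity in the values of $\tau$, so that the set of such invariants inherits a vector space structure from the target groups $H^{2k}(B;\R)$.

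First I would fix the ambient object: an assignment $\tau$ that sends each pair $(E,\F)$ consisting of a bundle $F\into E\to B$ with closed fiber and a finite complex local system $\F$ on $E$ to an element $\tau(E,\F)\in H^{2k}(B;\R)$. On the collection of all such assignments, I would define $(\alpha\tau_1+\beta\tau_2)(E,\F):=\alpha\,\tau_1(E,\F)+\beta\,\tau_2(E,\F)$ for $\alpha,\beta\in\R$, using the $\R$-vector space structure of $H^{2k}(B;\R)$. All the vector space axioms (commutativity and associativity of addition, distributivity, the inverse $-\tau$, and so on) then hold automatically because they hold in each target $H^{2k}(B;\R)$ and the operations are defined value by value; the zero assignment $\tau\equiv 0$ is the neutral element. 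It remains only to check that the subset of genuine twisted torsion invariants is a linear subspace.

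For this, I would verify two things. First, the zero assignment satisfies all the axioms vacuously: naturality reads $0=f^*0$, geometric additivity reads $0=\frac12(0+0)$, geometric transfer reads $0=\chi(S^n)\cdot 0+\tr_B^E(0)$, triviality reads $0=0$, the two coefficient axioms read $0=\sum_i 0$ and $0=0$, and the continuity axiom of section 5.2 is trivial for a constant assignment. Second, if $\tau_1$ and $\tau_2$ are twisted torsion invariants and $\tau:=\alpha\tau_1+\beta\tau_2$, then $\tau$ satisfies every axiom: naturality follows from $\R$-linearity of $f^*$, namely $\tau(f^*E,f^*\F)=\alpha f^*\tau_1(E,\F)+\beta f^*\tau_2(E,\F)=f^*\tau(E,\F)$; geometric additivity, geometric transfer, additivity for coefficients, and transfer/induction for coefficients each state that a fixed $\R$-linear combination of values of $\tau$ vanishes, and since each such relation holds for $\tau_1$ and $\tau_2$ and all the operators occurring in it are $\R$-linear, it holds for $\tau$; triviality passes because $\alpha\cdot 0+\beta\cdot 0=0$; and continuity passes because a real linear combination of continuous families is continuous.

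There is essentially no obstacle; the only point that deserves a line of care is the geometric transfer axiom, where one must note the relation is truly linear in $\tau$ — the Euler term $\chi(S^n)\tau_B(E,\F)$ scales with $\tau$, the coefficient $\chi(S^n)\in\{0,2\}$ is a fixed scalar, and $\tr_B^E$ is applied to the \emph{value} $\tau_E(D,q^*\F)$ and is $\R$-linear — and, similarly, that in the induction axiom the push-down $\pi_*$ acts only on the local-system argument, not on the value in cohomology. Once this bookkeeping is in place, linearity of all the axioms is immediate and the lemma follows.
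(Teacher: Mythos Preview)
Your proof is correct and follows exactly the paper's approach: the paper's entire proof is the single sentence ``The axioms are homogeneous linear equations in $\tau$,'' and your proposal simply unpacks this observation axiom by axiom.
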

\begin{proof} The axioms are homogeneous linear equations in $\tau$.
\end{proof}

Of course, the same statement holds for the set of non-twisted higher torsion invariants. K. Igusa proved for the space of non-twisted higher torsion invariants in \cite{Igusa1}:

\begin{Theorem}
For any $k$ the space of higher non-twisted torsion invariants in degree $4k$ is two dimensional and spanned by the non-twisted MMM class $M^{4k}$ and the non-twisted Igusa-Klein torsion $\tau^{IK}_{4k}$. In other words, for any non-twisted torsion invariant $\tau$ there exist unique $a,b\in \R$ so that 
	$$\tau=a\tau^{IK}+bM.$$
\end{Theorem}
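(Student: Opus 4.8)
The plan is to reproduce Igusa's classification argument from \cite{Igusa1}, which the twisted case will later mirror: first pin down the $2$-dimensional subspace spanned by $\tau^{IK}_{4k}$ and $M^{4k}$, then show every non-twisted torsion invariant lies in it. For the \emph{linear independence} of the two classes I would evaluate them on two well-chosen bundles. On the linear $S^2$-bundle $S^2(\lambda)$ associated to the universal complex line bundle over $\C\P^N$ (with $N$ large), the transfer formula gives $M^{4k}(S^2(\lambda)) = \tr_B^E\big((2k)!\,ch_{4k}(T^vE)\big) \neq 0$, since the vertical tangent bundle carries nonzero rational characteristic classes, whereas the Igusa--Klein torsion of a fiberwise-linear sphere bundle with constant coefficients vanishes (a linear bundle admits a fiberwise Morse function producing no interesting filtered torsion). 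Conversely, a suitable Hatcher exotic disk- or sphere-bundle is fiberwise homeomorphic to a product, so its vertical tangent (micro)bundle, hence $M^{4k}$, is trivial, while $\tau^{IK}_{4k}\neq 0$ on it --- this nonvanishing being exactly what makes Hatcher's examples interesting. Hence $\tau^{IK}_{4k}$ and $M^{4k}$ span a $2$-dimensional subspace, and this also forces uniqueness of the scalar $a$ (and $b$, in this non-twisted degree).

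For the \emph{spanning} statement, let $\tau$ be an arbitrary non-twisted torsion invariant. By naturality I may evaluate $\tau$ on the two universal examples above, and since $H^{4k}(\C\P^\infty;\R)$ is one-dimensional I can choose $a,b\in\R$ so that the difference torsion $\tau^\delta := \tau - a\tau^{IK} - bM$ vanishes on both of them. It then remains to prove $\tau^\delta \equiv 0$. Here the geometric additivity axiom reduces the value of $\tau^\delta$ on a smooth bundle with a handle decomposition to contributions of disk bundles glued handle-by-handle; the geometric transfer axiom controls the passage to and from sphere bundles of vector bundles; and the stable parametrized $h$-cobordism theorem together with the Hatcher construction of \cite{Hatcherconstruction} produce a generating family for the ``exotic'' smooth bundles that one must still account for.

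The main obstacle is to show $\tau^\delta$ vanishes on all of these building blocks. The crucial input is that $\tau^\delta$ kills the Hatcher exotic bundles --- which rationally generate, in each relevant degree, the difference between the smooth and the fiber-homotopy classification of bundles --- so that $\tau^\delta$ descends to an additive, transfer-compatible characteristic class depending only on the underlying fibration. Such a fiber-homotopy-invariant torsion class is, rationally, a polynomial in the MMM-type classes of the vertical tangent bundle, hence is detected on linear sphere and disk bundles over projective spaces, where the previous step has arranged that it vanishes; therefore $\tau^\delta = 0$ and $\tau = a\tau^{IK} + bM$. The genuinely nontrivial ingredients concealed in this last step --- the rational computation of the homology of diffeomorphism groups and of the relevant Waldhausen $K$-theory, and the identification of $\tau^{IK}$ on Hatcher bundles via the Borel regulator --- are what I would take wholesale from \cite{Igusa1} and the works cited there.
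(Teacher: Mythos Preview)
The paper does not actually prove this theorem; it is quoted as a result of Igusa from \cite{Igusa1}, and the paper's own contribution is the twisted analogue. So there is no proof here to compare your proposal against directly, though the paper's strategy for the twisted case (Sections 5--7) does mirror \cite{Igusa1}.

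That said, your sketch contains a concrete error in the linear-independence step. You assert that the Igusa--Klein torsion of a fiberwise-linear sphere bundle with constant coefficients vanishes. This is false: the paper records (Proposition~3.8, from \cite{Igusa3}) that
\[
\tau^{IK}_{2l}(S^n(\lambda)) = (-1)^{l+n}\zeta(2l+1)\,ch_{4l}(\lambda) \neq 0,
\]
so your proposed pair of test bundles does not separate the two classes. The correct separation, spelled out in Section~3.3, uses that the MMM class vanishes on bundles with closed odd-dimensional fiber (Proposition~3.3): hence $M(S^1(\lambda))=0$ while $\tau^{IK}(S^1(\lambda))\neq 0$, and on $S^2(\lambda)$ both are nonzero. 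Evaluating on $S^1(\lambda)$ and $S^2(\lambda)$ gives the scalars $a,b$ and proves linear independence; no Hatcher example is needed at this stage.

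Your endgame also does not match the actual argument. You say that a fiber-homotopy-invariant torsion class must be, rationally, a polynomial in MMM-type classes of the vertical tangent bundle and is therefore detected on linear bundles. But the vertical tangent bundle is not a fiber homotopy invariant, so this formulation does not make sense as stated. In \cite{Igusa1} (and in the twisted analogue carried out in Sections~7.2--7.3 of this paper), once $\tau^\delta$ is shown to be a fiber homotopy invariant one proceeds differently: by attaching trivial pieces one reduces the rational homology of the fiber to zero without changing $\tau^\delta$, and then one shows directly, via an analysis of the classifying space $B\Diff_0(M\rel D)$ and an embedding/immersion argument, that $\tau^\delta$ vanishes on bundles with rationally acyclic fiber. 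The broad architecture you describe (form $\tau^\delta$, kill it on spheres and disks, use Hatcher examples to get fiber homotopy invariance) is correct; the specific mechanisms at both ends need to be replaced by the ones above.
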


Now, let $Top_{fin}$ be the full subcategory of $Top$ of topological spaces with finite fundamental group and $Top_{sim}$ the full subcategory of simple topological spaces. A space $F$ is called simple if the fundamental group $\pi_1 F$ acts trivially on the homology $H_*(F;\Z).$ If we restrict a twisted torsion invariant to bundles with fibers in $Top_{sim}$ and base in $Top_{fin}$ we get the main theorem:



\begin{Theorem}[Main Theorem] 
In the setting above, the space of higher twisted torsion invariants in degree $2k$ on bundles with simple fibers and rationally simply conntected base is two dimensional and spanned by the twisted MMM class and the twisted Igusa-Klein torsion, if $k$ is even, and one dimensional and spanned by the Igusa-Klein torsion, if $k$ is odd. In other words, for any twisted torsion invariant $\tau$, there exists a unique $a\in \R$ and there exists a (not necessarily unique) $b\in \R$, so that
	$$\tau=a\tau^{IK}+bM.$$
\end{Theorem}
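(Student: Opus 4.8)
The plan is to reduce the classification of twisted torsion invariants on bundles with simple fiber and rationally simply connected base to the non-twisted case (Igusa's theorem, stated above) together with a computation on a small family of test bundles. First I would fix a twisted torsion invariant $\tau$ of degree $2k$ and produce the candidate scalars. Restricting $\tau$ to constant local systems gives a non-twisted torsion invariant $\tau(E,\1)$; by Igusa's theorem there are unique $a,b\in\R$ with $\tau(E,\1)=a\tau^{IK}(E,\1)+bM(E)$ in degree $4k$, while in degree $4l+2$ the triviality axiom forces $\tau(E,\1)=0$, so we may take any $b$ (for instance $b=0$, since $M$ vanishes in odd $k$ anyway) and we must pin down $a$ separately. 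To pin down $a$ in the odd-degree case I would evaluate $\tau$ on the fiberwise quotient $S^1(\lambda)/(\Z/n)$ over $\C\P^\infty$ described in the introduction: using the transfer/induction-for-coefficients axiom and additivity for coefficients to split the regular representation, together with naturality and the known value of $\tau^{IK}$ on this bundle, one solves for $a$. In the even-degree case the pair $(a,b)$ is determined by evaluating on $S^1(\lambda)$ and $S^2(\lambda)$ over $\C\P^\infty$, exactly as in Igusa's argument, since $H^{2k}(\C\P^\infty;\R)$ is one-dimensional and the geometric transfer axiom relates these two values.

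Next I would form the difference torsion $\tau^\delta:=\tau-a\tau^{IK}-bM$, which is again a twisted torsion invariant (by the vector space lemma and the fact that $\tau^{IK}$ and $M$ satisfy the axioms), and whose associated non-twisted invariant $\tau^\delta(E,\1)$ vanishes by construction. The goal becomes showing $\tau^\delta\equiv 0$ on the relevant class of bundles. I would first extend $\tau^\delta$ to bundles with vertical boundary and introduce the relative torsion for bundle pairs (section 4), so that geometric additivity can be used to cut any bundle along vertical hypersurfaces into standard pieces: disk bundles, sphere bundles, and lens space bundles. Using the geometric transfer axiom and the vanishing of $\tau^\delta(-,\1)$, one checks $\tau^\delta=0$ on sphere and disk bundles directly, and for lens space bundles one invokes the explicit basis for the space of $h$-cobordism bundles of a lens space from \cite{Hatcherconstruction} together with the continuity axiom (this is the one place it is used) and the equivariant Hatcher construction calculations, to conclude $\tau^\delta=0$ on odd-dimensional lens space bundles as well.

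From vanishing on these building blocks I would argue that $\tau^\delta$ descends to a fiber homotopy invariant: since the difference torsion vanishes on the Hatcher-type $h$-cobordism bundles that generate the difference between fiber-diffeomorphic and fiber-homotopy-equivalent bundles, $\tau^\delta$ cannot distinguish bundles in the same fiber homotopy type. Finally, restricting to bundles with simple fiber and rationally simply connected base, I would show this fiber homotopy invariant must be trivial — here one uses that such bundles are, rationally and up to fiber homotopy, controlled by their underlying fibration data in a way that the already-established vanishing on products (the naturality remark) and on the standard pieces forces $\tau^\delta=0$. This is section 7 of the outline.

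The main obstacle I expect is the lens space step: showing that $\tau^\delta$ vanishes on a spanning set of $h$-cobordism bundles over lens spaces. This requires the full strength of the equivariant Hatcher construction machinery of \cite{Hatcherconstruction}, the delicate continuity axiom, and a careful bookkeeping of how the twisted Igusa-Klein torsion and the MMM class behave under these constructions; everything downstream (fiber homotopy invariance, triviality on simple fibers) is comparatively formal once this input is in hand. A secondary difficulty is the passage from "vanishes on building blocks" to "vanishes on all bundles" via geometric additivity and the relative torsion — one must ensure every bundle over a rationally simply connected base with simple fiber can indeed be assembled, up to the information $\tau^\delta$ sees, from disks, spheres, and lens spaces.
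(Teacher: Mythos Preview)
Your outline tracks the paper's overall architecture, but there are two genuine gaps.

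\textbf{The $S^1$-bundle step and the continuity axiom.} You say $a$ in the odd-degree case is pinned down by evaluating on $S^1(\lambda)/(\Z/n)$ with a fixed $\F_{\zeta_n}$, and that the continuity axiom enters only later at the lens-space $h$-cobordism step. In the paper this is reversed. Evaluating on a single $(n,\zeta_n)$ gives a candidate $a=a(n,\zeta_n)$, but nothing in the axioms you cite forces this to be independent of the choice. The paper packages $\tau(S^1(\lambda)/n,\F_\zeta)$ into a function $f_\tau$ on roots of unity, uses the transfer/induction and the $m^k$-scaling lemma to show $f_\tau$ satisfies the Kubert identity, and then invokes Milnor's theorem that continuous Kubert solutions form a one-dimensional space of the correct parity. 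The continuity axiom is used precisely here, and only here, to make Milnor's theorem applicable; without it you do not get a single scalar $a$ working for all $n$ and $\zeta$. Once this is in hand, $\tau^\delta=0$ on all $S^1$-bundles, and that is the base case of the inductive join-decomposition argument for linear lens space bundles (which you skip over in favor of the $h$-cobordism step; the paper does both, in that order).

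\textbf{The final triviality step is not formal.} You describe Section~7 as ``comparatively formal once this input is in hand,'' arguing that vanishing on products and standard pieces forces $\tau^\delta=0$. The paper's argument is substantially more involved and does not proceed by cutting into disks, spheres, and lens spaces. It introduces a \emph{lens space suspension} $\Sigma_n F$ (a homotopy pushout along maps $F\to L_n^{2N}$ classifying the holonomy cover) that preserves the local system while shifting rational homology, proves $\tau^\delta(\Sigma_{n,B}E,\Sigma\F)=-\tau^\delta(E,\F)$, and then runs an obstruction-theory argument to produce maps $B\times L_n^{m+k}\to\Sigma_{n,B}^kE$ that kill top rational homology of the fiber one dimension at a time. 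This reduces to fibers with $\bar H_*(F;\R)=0$, and the final vanishing there uses a classifying-space argument together with immersion theory and the rational acyclicity of $\Map_*(M,X)$ for rationally acyclic $M$. None of this is a decomposition into standard pieces, and the simplicity hypothesis on the fiber is used specifically to control the $\Z/n$-action on $H_*(\tilde F;\R)$ during the homology-reduction step.
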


\begin{Remark}
If $k$ is even, we get a non-twisted torsion invariant from the twisted one by always inserting the trivial representation. Then the numbers $a$ and $b$ used in both theorems above will be the same.
\end{Remark}

The rest of the paper is dedicated to the prove of the main theorem.

\subsection{The Scalars $a$ and $b$}

Now we try to determine the scalars $a$ and $b,$ by evaluation on $S^1$-bundles.

\subsubsection{In degree $2k=4l$}

Let us first look at a twisted torsion invariant in degree $2k=4l.$ In this case the scalars must be the same as the ones we get for the corresponding non-twisted torsion. To determine them we follow Igusa's approach\cite{Igusa1} and look at the universal $S^1\cong U(1)\cong SO(2)$-bundle $\lambda$ over $\C\P^\infty=BU(1)$. Furthermore, let $S^1(\lambda)$ be the associated circle bundle with $\lambda$ and $S^2(\lambda)$ the $S^2$-bundle associated with $S^1(\lambda)$ (by fiberwise suspension of $S^1(\lambda)$). Since the cohomology ring of $\C\P^\infty$ is a polynomial algebra generated by $c_1(\lambda)$, the cohomology group $H^{2k}(\C\P^\infty;\R)\cong\R$ is generated by $ch_{2k}(\lambda)=c_1^k/k!$.\\
From this, we immediately get scalars $s_1,s_2\in\R$ for any twisted torsion invariant in degree $2k=4l$ with
	 $$\tau(S^1(\lambda))=s_1 ch_{2k}(\lambda)$$
and
	$$\tau(S^2(\lambda))=s_2 ch_{2k}(\lambda).$$
Furthermore we have the following two propositions (from \cite{Igusa3} and \cite{Igusa1}):

\begin{Proposition}
Substituting $2l=k$ we get
	$$\tau_{2l}^{IK}(S^n(\lambda))=(-1)^{l+n}\zeta(2l+1)ch_{4l}(\lambda).$$
\end{Proposition}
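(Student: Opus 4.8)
The plan is to reduce everything to Igusa's computation of the higher Franz-Reidemeister torsion of the universal oriented circle bundle (here $S^n(\lambda)$ carries the constant local system $\1$, so this is the non-twisted computation). First I would note that since $H^{4l}(\C\P^\infty;\R)\cong\R\cdot ch_{4l}(\lambda)$ is one-dimensional, in each of the cases $n\in\{1,2\}$ the claim is merely the identification of a single real number, and it suffices to evaluate $\tau^{IK}_{2l}(S^n(\lambda))$ on the fundamental class of a sufficiently large $\C\P^m$.

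For $n=1$: over $\C\P^m$ the bundle $S^1(\lambda)$ is the Hopf bundle $S^{2m+1}\to\C\P^m$, with $1$-dimensional fiber $S^1$ and $\pi_1(S^1)=\Z$, so that $\Z[\pi_1 S^1]=\Z[t^{\pm1}]$. The higher torsion is obtained by choosing a fiberwise framed (generalized Morse) function on the fiber circles, reading off the associated parametrized based acyclic complex over $\Z[t^{\pm1}]$, and applying the Igusa-Klein construction; equivalently, $S^1(\lambda)$ is rationally the universal instance of the Hatcher construction of \cite{Hatcherconstruction}. The arithmetic content is the image of the Borel regulator on $K_{4l+1}(\Z)\otimes\Q\cong\Q$ (for $l\ge1$), which is exactly what manufactures the factor $\zeta(2l+1)$, and Igusa's normalization is arranged so that the constant of proportionality is precisely $(-1)^{l+1}$. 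This is the computation carried out in \cite{Igusa3}.

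For $n=2$: $S^2(\lambda)$ is the fiberwise suspension of $S^1(\lambda)$, equivalently the fiberwise double of the disk bundle $D(\lambda_\R)$ of the underlying oriented $2$-plane bundle, glued along $S^1(\lambda)=S(\lambda_\R)$ with the two halves carrying opposite orientations. I would then invoke the behaviour of Igusa-Klein torsion under this doubling, worked out in \cite{Igusa1}: the gluing locus $S^1(\lambda)$ has odd-dimensional fiber and so contributes nothing rationally, while the orientation reversal built into the doubling flips the overall sign, with net effect $\tau^{IK}_{2l}(S^2(\lambda))=-\tau^{IK}_{2l}(S^1(\lambda))$. Thus the proportionality constant $(-1)^{l+1}$ for $n=1$ becomes $(-1)^{l+2}=(-1)^{l}$ for $n=2$, so in both cases the exponent is $l+n$, as asserted.

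The hard part is not the appearance of $\zeta(2l+1)$ --- that is essentially forced, being the only arithmetic input available and coming from $K_{4l+1}(\Z)$ --- but rather the exact sign $(-1)^{l+n}$, which demands careful bookkeeping of Igusa's conventions for higher torsion, of the orientation of the vertical tangent bundle, and of the identification of $ch_{4l}(\lambda)$ with the chosen generator of $H^{4l}(\C\P^\infty;\R)$. All of that sign bookkeeping is settled in \cite{Igusa3} and \cite{Igusa1}, so in practice one would assemble and quote it rather than redo it.
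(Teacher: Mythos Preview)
The paper does not prove this proposition; it simply quotes it from \cite{Igusa3} and \cite{Igusa1}. So there is no in-paper argument to compare against, and your task is really to point correctly at what those references do.

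For $n=1$ your sketch is accurate: the computation in \cite{Igusa3} does go through the Borel regulator on $K_{4l+1}(\Z)$, and that is where $\zeta(2l+1)$ enters.

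For $n=2$ your argument has a genuine gap. You write that $S^2(\lambda)$ is the fiberwise double of $D^2(\lambda)$ along $S^1(\lambda)$, and that ``the orientation reversal built into the doubling flips the overall sign'' to give $\tau^{IK}_{2l}(S^2(\lambda))=-\tau^{IK}_{2l}(S^1(\lambda))$. But the additivity axiom as stated here yields only the tautology $\tau(S^2(\lambda))=\tau(D\,D^2(\lambda))=\tau(S^2(\lambda))$; there is no orientation-reversal term in the axiom that would manufacture a sign. More decisively, the relation $\tau(S^2(\lambda))=-\tau(S^1(\lambda))$ \emph{fails} for a general torsion invariant: for the MMM class one has $M(S^1(\lambda))=0$ (odd-dimensional fiber) while $M(S^2(\lambda))=2k!\,ch_{2k}(\lambda)\neq 0$. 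So no argument from the doubling axiom alone can give this relation, and your step cannot be rescued by sharper bookkeeping.

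What actually happens in \cite{Igusa3} is a direct computation of $\tau^{IK}$ on linear sphere bundles via the framed function/Morse-theoretic definition; the sign $(-1)^{l+n}$ drops out of that calculation (fiberwise suspension adds a critical point of one higher index, which shifts the sign inside the IK construction itself, not in the axioms). You should cite that computation for both $n=1$ and $n=2$ rather than attempt to derive the $n=2$ case from the $n=1$ case by doubling.
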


\begin{Proposition}
$M_k(S^2(\lambda))=2k!ch_{2k}(\lambda)!.$
\end{Proposition}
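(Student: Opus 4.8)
The plan is to evaluate directly the two ingredients of the definition $M^{2l}(E) = \tr_B^E\bigl((2l)!\,ch_{4l}(T^vE)\bigr)$ for $E = S^2(\lambda)$: the Chern character of the vertical tangent bundle, and the fibre transfer. Throughout we are in the range $2k = 4l$, so $k = 2l$ is even and $M_k = M^{2l}$.

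First I would identify the vertical tangent bundle. Writing $\lambda_\R$ for the underlying real $2$-plane bundle of $\lambda$, the fibrewise suspension $S^2(\lambda)$ is the unit sphere bundle $S(\lambda_\R \oplus \underline\R)$ of the rank-$3$ real bundle $\lambda_\R \oplus \underline\R$ over $\C\P^\infty$ (the equatorial $S^1$-subbundle being $S^1(\lambda) = S(\lambda_\R)$, and the trivial summand giving the two polar sections). For the unit sphere bundle $q\colon S(V) \to X$ of any real vector bundle $V$, the radial vector field trivialises a line subbundle of $q^*V$ complementary to $T^vS(V)$, whence $T^vS(V) \oplus \underline\R \cong q^*V$. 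Applied to $V = \lambda_\R \oplus \underline\R$ and cancelling a trivial summand, this gives the stable isomorphism $T^vS^2(\lambda) \cong_s q^*\lambda_\R$, which suffices since $M$ depends only on characteristic classes.

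Next I would push this through the Chern character and the transfer. Complexifying, $(\lambda_\R)\otimes_\R\C \cong \lambda \oplus \bar\lambda$, so $ch\bigl(T^vS^2(\lambda)\otimes\C\bigr) = q^*\bigl(e^{c_1(\lambda)} + e^{-c_1(\lambda)}\bigr)$; in the even degree $4l$ the two exponentials contribute equally, giving $ch_{4l}\bigl(T^vS^2(\lambda)\otimes\C\bigr) = 2\,q^*ch_{4l}(\lambda)$, and the $\tfrac12$ in the convention $ch_{4l}(T^vE) = \tfrac12 ch_{4l}(T^vE\otimes\C)$ cancels it: $ch_{4l}\bigl(T^vS^2(\lambda)\bigr) = q^*ch_{4l}(\lambda)$. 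Then by the definition $\tr_B^E(x) = p_*\bigl(x\cup e(T^vE)\bigr)$ and the projection formula (with $p = q$),
\[ M^{2l}\bigl(S^2(\lambda)\bigr) = p_*\bigl((2l)!\,q^*ch_{4l}(\lambda)\cup e(T^vS^2(\lambda))\bigr) = (2l)!\,ch_{4l}(\lambda)\cup p_*\bigl(e(T^vS^2(\lambda))\bigr). \]
Since the restriction of $e(T^vS^2(\lambda))$ to a fibre is the Euler class of $TS^2$, which evaluates to $\chi(S^2)=2$ on the fundamental class, we obtain $p_*\bigl(e(T^vS^2(\lambda))\bigr) = 2 \in H^0(\C\P^\infty;\R)$, hence $M^{2l}(S^2(\lambda)) = 2\,(2l)!\,ch_{4l}(\lambda)$; rewriting with $k = 2l$ yields $M_k(S^2(\lambda)) = 2\,k!\,ch_{2k}(\lambda)$, as claimed.

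The computation is short; the only steps requiring attention are the identification of $S^2(\lambda)$ as the sphere bundle of $\lambda_\R \oplus \underline\R$ together with the stable description of its vertical tangent bundle, and the careful bookkeeping of the normalisation constants — the $\tfrac12$ built into the Chern-character convention, the parity of $4l$ making $ch_{4l}(\bar\lambda) = ch_{4l}(\lambda)$, the factorial, and the factor $\chi(S^2) = 2$ emerging from the fibre integral $p_*(e(T^vE))$. The one external input is the standard fact that $p_*(e(T^vE))$ equals the Euler characteristic of the fibre, and I would not expect any genuine obstacle beyond correctly matching these normalisations.
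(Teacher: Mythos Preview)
The paper does not actually prove this proposition; it simply quotes it (together with the companion formula for $\tau^{IK}(S^n(\lambda))$) from \cite{Igusa3} and \cite{Igusa1}. Your direct computation is correct and supplies a self-contained argument: the identification $S^2(\lambda)=S(\lambda_\R\oplus\underline\R)$ with $T^vS^2(\lambda)\cong_s q^*\lambda_\R$, the parity $ch_{4l}(\bar\lambda)=ch_{4l}(\lambda)$, and the fibre integral $p_*(e(T^vE))=\chi(S^2)=2$ combine exactly as you describe to give $M^{2l}(S^2(\lambda))=2\,(2l)!\,ch_{4l}(\lambda)$, which with $k=2l$ is the stated formula $2\,k!\,ch_{2k}(\lambda)$.
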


Now we are taking in account that the MMM class is trivial on odd dimensional fibers, and therefore we get that $\tau(S^1(\lambda))=a\tau^{IK}(S^1(\lambda)).$ From this we get 
	$$a=s_1/((-1)^{1+l}\zeta(2l+1)).$$
Looking at the $S^2(\lambda)$ case, we have
	$$s_2=a(-1)^l\zeta(2l+1)+b2k!=-s_1+b2k!$$
and therefore
	$$b=\frac{s_1+s_2}{2k!}.$$

\subsubsection{In degree $2k=4l+2$}

Now let the degree be $2k=4l+2.$ In this case $\tau$ does not define a non-trivial non-twisted torsion invariant. On the other hand we also just need to determine $a$ since the MMM class vanishes in this degree.\\
In the degree $2k=4l+2$ case we cannot use the standard universal bundle for linear $S^1$-bundles $ES^1\to BS^1,$ since $ES^1$ is contractible and therefore will not admit a non-constant local system. But we can replace it by a very similar construction. First, recall that $ES^1$ can be constructed as follows: Take $S^1\subseteq \C$ and $S^{2N-1}\subseteq \C^N$. Then we have a fibration $S^1\into S^{2N-1}\to \C\P^{N-1}$. Taking the limit of this will yield an $S^1$-bundle with total space $S^\infty$, which is contractible and therefore the universal $S^1$-principal bundle $S^1\into S^{\infty}\to \C\P^\infty$.\\
Now we can look at a $\Z/n$-action on $S^1$ given by multiplication with the primitive $n$-th root of unity $e^{2\pi i/n}$. This will give rise to a fiberwise $\Z/n$-action on the bundle $S^1\into S^{2N-1}\to \C\P^{N-1}$. The action of $\Z/n$ on $S^{2N-1}$ is by construction the same as the one being taken to get a lens space $L_n^{2N-1}$ as quotient out of $S^{2N-1}$. Therefore taking the fiberwise quotient under the given $\Z/n$-action gives a bundle (since $S^1/n\cong S^1$)
	$$S^1\into L_n^{2N-1}\to \C\P^{N-1},$$
which yields in the limit to
	$$S^1\into L_n^\infty\to \C\P^\infty.$$
We will refer to this bundle as $S^1(\lambda)/n$, since it has the $S^1$-bundle associated with the universal line bundle as its $n$-fold covering. The $n$-fold Galois covering $S^{2N-1}\to L_n^{2N-1}$ gives a covering $S^{2N-1}\times \C\to L_n^{2N-1}$ where a fixed generator of $\Z/n$ acts on $\C$ by multiplication with an $n$-th root of unity $\zeta_n.$ Using this we can make the following important definition.

\begin{Definition} In the setting above, the non-constant local system $\F_{\zeta_n}$ on $L^{2N-1}_n$ is defined to be the non-constant local system of the sections of the covering $S^{2N-1}\times \C\to L_n^{2N-1}.$ The non-constant local system $\F_{\zeta_n}$ on $L^{\infty}_n$ is defined as the limit of these local systems on $L_n^{2N-1}.$
\end{Definition}

Again, we can use the fact that the cohomology of $\C\P^\infty$ is a group ring and that $H^{2k}(\C\P^\infty;\R)$ will be spanned by $ch_{2k}(\lambda)$ and therefore 
	$$\tau(S^1(\lambda)/n,\F_{\zeta_n})=s_1ch_{2k}(\lambda).$$
Furthermore we have again the following result from K. Igusa \cite{Igusa3}:

\begin{Proposition}
For the Igusa-Klein torsion we have
	$$\tau^{IK}(S^1(\lambda)/n,\F_{\zeta_n})=-n^kL_{k+1}(\zeta_n)ch_{2k}(\lambda),$$
where $L_{k+1}$ denotes the polylogarithm
	$$L_{k+1}(\zeta):=\Re\left(\frac 1{i^k}\sum_{l=1}^\infty\frac{\zeta_n^l}{n^{k+1}}\right).$$
\end{Proposition}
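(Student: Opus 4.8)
The plan is to reduce the statement to the computation of one real scalar and then to evaluate it by unwinding the Igusa--Klein construction for the circle bundle $S^1(\lambda)/n$; the computation itself is carried out by Igusa in \cite{Igusa3}, so what follows is a strategy rather than a full computation. By naturality one may work with the finite approximations $S^1\into L_n^{2N-1}\to\C\P^{N-1}$ and take the colimit only at the end, which is routine since $\tau^{IK}$ is natural and $\C\P^\infty=\colim_N\C\P^{N-1}$. The restriction of $\F_{\zeta_n}$ to a fiber $S^1$ is the rank one local system with monodromy $\zeta_n$, hence acyclic when $\zeta_n\ne 1$, so the twisted torsion is defined; when $\zeta_n=1$ the statement reduces to the vanishing of non-twisted higher torsion in degree $4l+2$, consistent with $L_{k+1}(1)=0$ for $k$ odd. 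Since $H^{2k}(\C\P^\infty;\R)\cong\R$ is spanned by $ch_{2k}(\lambda)$, we have $\tau^{IK}(S^1(\lambda)/n,\F_{\zeta_n})=s\cdot ch_{2k}(\lambda)$ for some $s\in\R$, and everything reduces to showing $s=-n^kL_{k+1}(\zeta_n)$.

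A second use of naturality localizes the problem. If $\zeta_n$ has order $d$ (so $d\mid n$), then $(S^1(\lambda)/n,\F_{\zeta_n})$ is the pull-back of $(S^1(\lambda)/d,\F_{\zeta_n})$ along the $(n/d)$-th power map $\C\P^\infty\to\C\P^\infty$, which multiplies $c_1(\lambda)$ by $n/d$; hence $\tau^{IK}(S^1(\lambda)/n,\F_{\zeta_n})=(n/d)^k\,\tau^{IK}(S^1(\lambda)/d,\F_{\zeta_n})$. This makes the claimed formula self-consistent under base change and reduces the computation to a primitive $n$-th root. In that case the surviving factor $n^k$ is not a base-change artifact: it records the fiberwise Euler class $nc_1(\lambda)$ of $S^1(\lambda)/n$, which up to the orientation of the fiber is the unit circle bundle of $\lambda^{\otimes n}$.

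It then remains to evaluate $s$ from Igusa's definition of $\tau^{IK}$ as the pull-back of a universal ``higher torsion'' cohomology class along the classifying map of the bundle with its coefficient system. For this linear family the classifying map is determined by the holonomy $\zeta_n$ and the fiberwise Euler class $nc_1(\lambda)$ alone, and the universal class evaluates on such data by a regulator-type formula whose degree-$2k$ part is $L_{k+1}(\zeta_n)$ times $(nc_1(\lambda))^k=n^k\,k!\,ch_{2k}(\lambda)$ --- a higher analogue of Franz's classical lens-space torsion formula, reflecting Borel's regulator on $K_{2k+1}$. (Equivalently one may compute the Bismut--Lott analytic torsion form of this circle bundle as a zeta-regularized sum over the eigenvalues of the $\F_{\zeta_n}$-twisted fiberwise Laplacian, which depend on $\theta$ with $\zeta_n=e^{2\pi i\theta}$, and invoke the comparison theorem identifying it with $\tau^{IK}$ up to an explicit correction.) Collecting the normalization constants in the definition of $\tau^{IK}$ absorbs the $k!$ and fixes the sign, yielding $-n^kL_{k+1}(\zeta_n)\,ch_{2k}(\lambda)$; as a check, in degree $0$ this specializes to $-L_1(\zeta_n)=\log|1-\zeta_n|$, the logarithm of the classical Franz--Reidemeister torsion of the fiber $S^1$ with monodromy $\zeta_n$.

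The main obstacle is this last step. Part of it is bookkeeping: tracking the sign, the power $n^k$, and the exact normalization of $L_{k+1}$ --- the $\Re$, the $1/i^k$, the placement of the powers of $n$ --- through Igusa's conventions is where essentially all of the work sits. The deeper part is the rigidity statement that the universal evaluation genuinely produces a polylogarithm and nothing looser; this is the point at which the higher torsion of lens-space bundles gets tied to the Borel regulator, and it is the heart of the argument in \cite{Igusa3}. The final passage from $L_n^{2N-1}$ back to $L_n^\infty$ is routine by comparison of colimits.
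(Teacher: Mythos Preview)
The paper does not prove this proposition at all: it is simply quoted as ``the following result from K.~Igusa \cite{Igusa3}'' and used as a black box. Your proposal is consistent with this --- you too defer the actual evaluation to \cite{Igusa3} --- but you add a layer of heuristic explanation (the naturality reductions, the order-$d$ base change matching the paper's later Lemma on $\tau(E/(nm),\F_{\zeta_n})=m^k\tau(E/n,\F_{\zeta_n})$, the degree-$0$ sanity check against classical Franz--Reidemeister torsion) that the paper does not attempt. So there is nothing to compare: your sketch is strictly more than what the paper offers, and it correctly flags that the substantive step --- the identification of the universal class with the polylogarithm via the Borel regulator --- lives entirely in the cited reference.
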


Putting this together we get
	$$a=-s_1/(n^kL_{k+1}(\zeta)).$$
We will prove later that $a$ is independent of the choice of the local system.

\section{Extension of Higher Twisted Torsion}

In this section, which strictly follows  the corresponding section in \cite{Igusa1}, we will extend a twisted torsion invariant $\tau$ to bundles whose fibers have a boundary and then define a relative torsion for  bundle pairs.

\begin{Definition}
A pair of bundles $(F,\partial_0)\to (E,\partial_0)\to B$ is called a bundle pair, if the vertical boundary $\partial^vE$ is the union of two subbundles $\partial^vE=\partial_0E\cup \partial_1 E$, which meet along their common boundary $\partial_0E\cap\partial_1 E=\partial^v\partial_0E=\partial^v\partial_1 E.$ 
\end{Definition}

\begin{figure}[h]
\label{bundle pair}
\begin{center}
\begin{picture}(150,130)
\setlength{\unitlength}{1cm}
\put(0,0){\includegraphics[width=2cm]{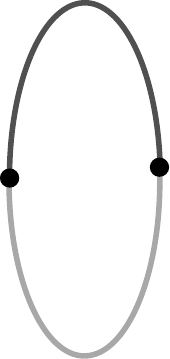}}
\put(1.4,0.0){$\partial_0F_x\cong I$}
\put(1.6,4){$\partial_1F_x\cong I$}
\put(0.4,2.5){\line(4,1){2.2}}
\put(2.0,2.6){\line(2,1){1}}
\put(2.6,3.2){$\partial\partial_iF_x\cong\{0,1\}$}
\end{picture}
\end{center}
\caption{The fiber over $x$ of a bundle pair with fiber $F\cong D^2$}
\end{figure}	
\subsection{Bundles with Vertical Boundary}

First we define the higher twisted torsion on bundles with vertical boundary:

\begin{Definition}[Higher twisted torsion for bundles with vertical boundary] Suppose $F\into E\to B$ is a  bundle with vertical boundary $\partial^v E\to B$ and local coefficient system $\F$ on $E$ and $\tau$ is a higher twisted torsion invariant. Then the twisted torsion of the bundle with boundary is defined by
	$$\tau(E,\F):=\frac 1 2(\tau (DE,\F^l\cup_{\id}\F^r)+\tau(\partial^v E,\F_{|\partial^vE})),$$
where $DE:=E^l\cup_{\id} E^r$ denotes the fiberwise double as before.
\end{Definition}
 At first we have the following Lemma:

\begin{Lemma} Suppose $E_i$ ($1\leq  i\leq 4$) are  bundles over $B$ with local systems $\F$ so that there are isomorphisms $\phi_{ij}:\partial^v E_i\to \partial^v E_j$ for all $i<j$ and the local systems satisfy $(\F_i)_{|\partial^v E_i}\cong\phi_{ij}^* (\F_j)_{|\partial^v E_j}$. Then
	$$\tau(E_1\cup_{\phi_{12}} E_2,\F_1\cup_{\phi_{12}}\F_2)
	+\tau(E_3\cup_{\phi_{34}} E_4,\F_3\cup_{\phi_{34}}\F_4)$$$$
	=\tau(E_1\cup_{\phi_{13}} E_3,\F_1\cup_{\phi_{13}}\F_3)
	+\tau(E_2\cup_{\phi_{24}} E_4, \F_2\cup_{\phi_{24}}\F_4).$$
\end{Lemma}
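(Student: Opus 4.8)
The plan is to reduce everything to the geometric additivity axiom (Axiom 2.7) together with the definition of twisted torsion on bundles with vertical boundary (Definition 4.4). Write $\tau(E_i \cup_{\phi_{ij}} E_j, \F_i \cup \F_j)$ using only the additivity axiom applied to the pair $(E_i, E_j)$: the axiom gives
\begin{equation*}
\tau(E_i \cup_{\phi_{ij}} E_j, \F_i \cup \F_j) = \tfrac12\bigl(\tau(DE_i, \F_i^l \cup \F_i^r) + \tau(DE_j, \F_j^l \cup \F_j^r)\bigr),
\end{equation*}
and the right-hand side depends \emph{only} on $E_i$ and $E_j$ separately, not on the gluing. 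So both sides of the claimed identity are, after applying the axiom to each of the four glued bundles, equal to
\begin{equation*}
\tfrac12\bigl(\tau(DE_1,\F_1^l\cup\F_1^r) + \tau(DE_2,\F_2^l\cup\F_2^r) + \tau(DE_3,\F_3^l\cup\F_3^r) + \tau(DE_4,\F_4^l\cup\F_4^r)\bigr).
\end{equation*}

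First I would check that the hypotheses actually permit all four of the relevant gluings $E_1\cup_{\phi_{12}}E_2$, $E_3\cup_{\phi_{34}}E_4$, $E_1\cup_{\phi_{13}}E_3$, $E_2\cup_{\phi_{24}}E_4$ to be formed and that the local systems match along the boundaries in each case — this is exactly what the compatibility conditions $(\F_i)_{|\partial^v E_i} \cong \phi_{ij}^*(\F_j)_{|\partial^v E_j}$ are there to guarantee, so one only needs to note that all the boundaries are mutually identified via the given $\phi_{ij}$ (and their composites, which one should verify are consistent, or simply observe that for the identity it suffices to have the four pairwise gluings available). Then I would apply Axiom 2.7 to each of the two bundles on the left side and each of the two on the right side, and observe that the four resulting expressions are literally the same four summands $\tfrac12\tau(DE_i,\F_i^l\cup\F_i^r)$ in each case.

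There is essentially no obstacle here: the lemma is a purely formal consequence of the fact that geometric additivity expresses the torsion of a glued bundle in terms of data intrinsic to the two pieces. The only mild subtlety — and the one place I would be careful — is bookkeeping of the gluing maps: one must make sure the identifications $\phi_{12}, \phi_{34}, \phi_{13}, \phi_{24}$ are mutually compatible enough that all four unions are legitimately defined (e.g.\ that $\phi_{24}\circ\phi_{12}$ and $\phi_{34}\circ\phi_{13}$ agree, so that there is a single consistent identification scheme), but this does not affect the computation of $\tau$ via the axiom, since each $\tau(DE_i,\cdot)$ ignores the gluing entirely. So I expect the proof to be three lines: apply Axiom 2.7 four times, once to each glued bundle, and compare. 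This is the twisted analogue of the corresponding step in \cite{Igusa1}, and the argument there goes through verbatim once "additivity" is replaced by "geometric additivity for coefficients-compatible gluings."
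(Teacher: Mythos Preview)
Your proposal is correct and matches the paper's proof exactly: both apply the geometric additivity axiom to each of the four glued bundles and observe that each side equals $\tfrac12\sum_i\tau(DE_i,\F_i^l\cup_\id\F_i^r)$. Your caution about consistency of the $\phi_{ij}$ is reasonable but, as you note, irrelevant to the computation since the doubled terms ignore the gluing maps.
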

\begin{proof} Both sides are equal to $\frac 1 2 \sum_i \tau(DE_i,\F_i^l\cup_{\id}\F_i^r)$ by the additivity axiom with the usual left and right copies of $E_i$ and $\F_i.$
\end{proof}

\begin{Lemma}
Let $E$ be a  bundle with local coefficient system $\F$ on $E$, then 
	$$\tau(\partial^v E, \F_{|\partial^vE})=\tau(\partial^v(E\times D^2),\F_{ind}),$$
where $\F_{ind}$ is the induced representation explained below.
\end{Lemma}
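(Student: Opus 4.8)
The plan is to unwind the definition of the twisted torsion for bundles with vertical boundary and identify both sides as the torsion of the fiberwise double of $\partial^v E$, so that everything reduces to a statement about the vertical boundary of $E\times D^2$. First I would note that $\partial^v(E\times D^2)$ decomposes: writing $D^2$ with its boundary $S^1$, the vertical boundary of $E\times D^2\to B$ is $(\partial^v E)\times D^2 \,\cup\, E\times S^1$, glued along $(\partial^v E)\times S^1$. Applying the definition of $\tau$ on bundles with vertical boundary to the bundle $E\times D^2$ is not quite what we want; instead the point is that $\partial^v(E\times D^2)$ is itself a \emph{closed}-fiber bundle (its fiber $\partial(F\times D^2)=\partial(F)\times D^2\cup F\times S^1$ is a closed manifold), so $\tau(\partial^v(E\times D^2),\F_{ind})$ is evaluated directly by the torsion invariant, with no doubling needed.

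Next I would make the induced local system $\F_{ind}$ explicit. The covering $E\times D^2 \to E$ (projection) pulls $\F$ back to $E\times D^2$, and $\F_{ind}$ should be this pullback restricted to $\partial^v(E\times D^2)$; equivalently, on the piece $E\times S^1$ it is $\F$ pulled back along $E\times S^1\to E$, and on $(\partial^v E)\times D^2$ it is $\F_{|\partial^v E}$ pulled back along the projection. The key geometric observation is that $E\times D^2\to B$ deformation retracts fiberwise onto $E\to B$, and correspondingly $\partial^v(E\times D^2)$ can be understood via the additivity axiom: it is glued from $(\partial^v E)\times D^2$ and $E\times S^1$ along $(\partial^v E)\times S^1$. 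The factor $E\times S^1$ has fiber $F\times S^1$ with $S^1$ a closed manifold of odd dimension $1$, so its transfer-type contributions vanish rationally, and the torsion of a product bundle $E\times S^1$ with a pulled-back local system should be controlled — in fact I expect $\tau$ of such an $S^1$-stabilized bundle to reduce, via the geometric transfer axiom applied to the trivial $S^1$-bundle (which has Euler characteristic $0$), to a transfer term that dies because the fiber dimension parity is odd. So the $E\times S^1$ contribution drops out and one is left with the $(\partial^v E)\times D^2$ piece; but $(\partial^v E)\times D^2\to B$ deformation retracts onto $\partial^v E\to B$, and by naturality (Axiom~2.6 and the Remark following it, applied fiberwise/up to the retraction) its torsion equals $\tau(\partial^v E,\F_{|\partial^v E})$.

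Carrying this out precisely: I would apply the additivity axiom (Axiom~2.9) to the decomposition $\partial^v(E\times D^2) = \big((\partial^v E)\times D^2\big)\cup_{\phi}\big(E\times S^1\big)$ with $\phi$ the identification along $(\partial^v E)\times S^1$, obtaining
$$\tau(\partial^v(E\times D^2),\F_{ind}) = \tfrac12\big(\tau(D((\partial^v E)\times D^2),\dots) + \tau(D(E\times S^1),\dots)\big).$$
The fiberwise double of $(\partial^v E)\times D^2$ is $(\partial^v E)\times S^2$, and by the geometric transfer axiom (Axiom~2.10) with the trivial $S^2$-bundle over $\partial^v E$ this equals $\chi(S^2)\tau(\partial^v E,\F_{|\partial^v E}) + \tr(\dots) = 2\tau(\partial^v E,\F_{|\partial^v E})$ modulo a transfer term; that transfer term comes from $\partial^v E$ viewed over itself and is controlled via naturality. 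The fiberwise double of $E\times S^1$ is $E\times (S^1\cup_{\id}S^1)$; again by geometric transfer over the trivial $S^1$-bundle (now with $\chi(S^1)=0$), this contributes only a transfer along a bundle with odd-dimensional closed fiber, hence $0$ rationally. Assembling the pieces yields $\tau(\partial^v(E\times D^2),\F_{ind}) = \tau(\partial^v E,\F_{|\partial^v E})$.

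The main obstacle I anticipate is bookkeeping the local systems through all these identifications — in particular, checking that the glued local system on $\partial^v(E\times D^2)$ really is the one obtained by the two compatible pullbacks, and that the "transfer terms" I am claiming to vanish are genuinely of the form $\tr_B^E(\cdot)$ for an odd-dimensional closed fiber (so that the parity identity $\tr_B^E = (-1)^n\tr_B^E$ from the preliminaries forces them to zero rationally). A secondary subtlety is that $\partial^v E$ need not itself be a closed-fiber bundle if $F$ has corners, so I would want to phrase the argument so that it only ever evaluates $\tau$ on genuinely closed-fiber bundles or uses the already-established boundary extension consistently; the cleanest route is probably to first treat the case where $\partial^v E$ has closed fiber and then reduce the general case to it by the same doubling trick.
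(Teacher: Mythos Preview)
Your overall strategy coincides with the paper's: decompose $\partial^v(E\times D^2)=(\partial^v E\times D^2)\cup_{\partial^v E\times S^1}(E\times S^1)$, apply geometric additivity to get half the sum of the torsions of the two fiberwise doubles, and evaluate each double via the transfer axiom for a trivial sphere bundle. The description of $\F_{ind}$ as the glued pullback system is also exactly what the paper does.

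The gap is in your identification of the second double. The vertical boundary of $E\times S^1$ as a bundle over $B$ is $\partial^v E\times S^1$, so its fiberwise double is $(E\cup_{\partial^v E}E)\times S^1 = DE\times S^1$, not $E\times(S^1\cup_{\id}S^1)$; the $S^1$-factor has empty boundary and is untouched by the doubling. With this correction the transfer axiom applied to the trivial $S^1$-bundle $DE\times S^1\to DE$ gives
\[
\tau_B(DE\times S^1,\ldots)=\chi(S^1)\,\tau_B(DE,\ldots)+\tr_B^{DE}\bigl(\tau_{DE}(DE\times S^1,\ldots)\bigr),
\]
and the first term vanishes because $\chi(S^1)=0$; this is precisely the paper's computation. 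Your parity argument (``transfer along a bundle with odd-dimensional closed fiber'') is aimed at the wrong transfer: the relevant map here is $\tr_B^{DE}$, whose fiber $DF$ has dimension $\dim F$, which carries no prescribed parity, so $\tr_B^{DE}$ has no reason to vanish. The vanishing comes instead from the inner class $\tau_{DE}(DE\times S^1,q^*\F')$, the torsion of a trivial sphere bundle with local system pulled back from the base. The same remark applies to the $(\partial^v E)\times S^2$ piece: the residual term $\tr_B^{\partial^v E}(\tau_{\partial^v E}(\partial^v E\times S^2,\ldots))$ drops out for the same reason, not by fiber-dimension parity.
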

\begin{proof} We have $\partial^v(E\times D^2)=\partial^v E\times D^2\cup_{\partial^vE\times S^1} E\times S^1$. Furthermore we have a local systems $\F_1$ on $\partial^v E\times D^2$ and $\F_2$ on $E\times S^1$ given by the pull-back along the projection to the $E$-factor of the local system there. These two local systems will be canonically isomorphic on $\partial^vE \times S^1$ and therefore can be glued together to the local system $\F_{ind}$ on $\partial^v(E\times D^2).$ Additivity guaranties that
	$$\tau(\partial^v(E\times D^2),\F_{ind})=\frac 1 2(\tau(\partial^v E\times S^2,\F_1^l\cup_{\id}\F_1^r)+\tau(DE\times S^1,\F_2^l\cup_{\id}\F_2^r)),$$
with the usual induced representations on the fiberwise doubles. But we have $\tau(\partial^vE\times S^2,\F_1^l\cup_{\id} F_1^r)=2\tau(\partial^vE, \F_{|\partial^v E})$ and $\tau(DE\times S^1, \F_2^l\cup_{\id}\F_2^r)=0$ by the transfer axiom.
\end{proof}

\begin{Lemma}[additivity in the boundary case]\label{additivity}
Suppose $E$ is a bundle over $B$ and $(E_1,\partial_0)$ and $(E_2,\partial_0)$ are  bundle pairs such that $E_1, E_2\subseteq E,$ $\partial_0 E_1=\partial_0 E_2=E_1\cap E_2$ and $E=E_1\cup E_2.$ Let $\F$ be a local system on $E$ and $\F_1:=\F_{|E_1}$ and $\F_2:=\F_{|E_2},$ then
	$$\tau(E_1\cup E_2, \F)=\tau(E_1,\F_1)+\tau(E_2,\F_2)-\tau(E_1\cap E_2,\F_{|E_1\cap E_2}).$$
\end{Lemma}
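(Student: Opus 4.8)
The plan is to unfold the definition of the torsion on bundles with vertical boundary on all four terms $E_1\cup E_2$, $E_1$, $E_2$, $E_1\cap E_2$, thereby reducing the claim to an identity among the torsions of closed-fibre bundles, and then to evaluate the fibrewise double $D(E_1\cup E_2)$ by means of the geometric additivity axiom. Write $E_0:=E_1\cap E_2=\partial_0 E_1=\partial_0 E_2$. First I would record the three gluing decompositions
$$\partial^v(E_1\cup E_2)=\partial_1 E_1\cup_{\partial^v E_0}\partial_1 E_2,\qquad \partial^v E_i=E_0\cup_{\partial^v E_0}\partial_1 E_i,$$
$$D(E_1\cup E_2)=G_1\cup_{DE_0}G_2,\qquad G_i:=E_i^l\cup_{\partial_1 E_i}E_i^r,$$
where $G_i$ is the partial fibrewise double of $E_i$ along the $\partial_1$-part of its boundary only, which has vertical boundary exactly $DE_0$. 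In each of these three identities a closed-fibre bundle is exhibited as a union of two bundles with vertical boundary glued along their \emph{entire} vertical boundary, so the geometric additivity axiom applies directly and yields
$$\tau(\partial^v(E_1\cup E_2))=\tfrac12\bigl(\tau(D\partial_1 E_1)+\tau(D\partial_1 E_2)\bigr),\qquad \tau(\partial^v E_i)=\tfrac12\bigl(\tau(DE_0)+\tau(D\partial_1 E_i)\bigr),$$
$$\tau(D(E_1\cup E_2))=\tfrac12\bigl(\tau(DG_1)+\tau(DG_2)\bigr).$$

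Substituting these into the defining formula $\tau(X)=\tfrac12\tau(DX)+\tfrac12\tau(\partial^v X)$ applied to each of $E_1\cup E_2$, $E_1$, $E_2$, $E_0$ and collecting terms — a routine if slightly tedious manipulation in which every $\tau(D\partial_1 E_i)$ and $\tau(DE_0)$ contribution cancels — the asserted additivity becomes equivalent to the single relation
$$\tau(DG_i)=2\,\tau(DE_i)-\tau(\partial^v E_0)\qquad(i=1,2).$$
This is exactly the additivity statement for the \emph{symmetric} gluing $G_i=E_i\cup_{\partial_1 E_i}\bar E_i$ of $E_i$ with its mirror copy, and it is the real content of the lemma.

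To prove it I would exploit the extra symmetry of $DG_i=D(G_i)$: this closed-fibre bundle is a fibrewise union of four copies of $E_i$ arranged in a cycle and glued alternately along $\partial_1 E_i$ and along $\partial_0 E_i=E_0$, and rotating the cycle by $\pi$ defines a fibrewise involution whose quotient is canonically $DE_i$ and whose fixed locus is precisely the corner bundle $\partial^v E_0$, sitting in $DE_i$ in codimension two. Thus $DG_i\to DE_i$ is a fibrewise branched double covering branched along $\partial^v E_0$. Cutting $DE_i$ along the unit sphere bundle of a tubular neighbourhood of $\partial^v E_0$ presents it as a union, along full vertical boundaries, of a $D^2$-bundle over $\partial^v E_0$ and its complement; $DG_i$ is cut compatibly into the corresponding $D^2$-bundle over $\partial^v E_0$, on which the covering is fibrewise $z\mapsto z^2$, and an honest fibrewise double covering of the complement. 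The geometric transfer axiom handles the neighbourhood piece, the transfer/induction axiom for coefficients handles the covering piece — here one also verifies that the sign local system of the double covering contributes nothing, by splitting $DE_i$ once more along the seam of its own symmetric double and applying the geometric additivity axiom — and reassembling via the geometric additivity axiom gives $\tau(DG_i)=2\tau(DE_i)-\tau(\partial^v E_0)$. Together with the reduction above this proves the lemma.

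The hard part is this last step: understanding how the torsion transforms under the fibrewise branched covering $DG_i\to DE_i$, that is, pinning down the correction term $\tau(\partial^v E_0)$ that the codimension-two branch locus forces, while keeping careful track both of the corner along $\partial_0 E_i\cap\partial_1 E_i$ — which has to be smoothed consistently whenever one forms a double — and of the sign local system of the covering away from the branch locus. Everything else is bookkeeping with the geometric additivity, geometric transfer, and transfer/induction axioms.
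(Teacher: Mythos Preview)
Your reduction is correct: unfolding the definitions and applying the geometric additivity axiom to the three gluings you list does reduce the claim to the single identity
\[
\tau(DG_i)\;=\;2\,\tau(DE_i)-\tau(\partial^v E_0)\qquad(i=1,2),
\]
with $G_i=E_i^l\cup_{\partial_1 E_i}E_i^r$. The paper obtains the same content, but by a much shorter route that avoids branched coverings entirely: it uses the elementary ``switching'' lemma (an immediate corollary of geometric additivity, proved just before this lemma: for four bundles with isomorphic boundary, $\tau(E_1\cup E_2)+\tau(E_3\cup E_4)=\tau(E_1\cup E_3)+\tau(E_2\cup E_4)$) twice---once for the vertical boundaries, once for the doubles---and then the preceding lemma $\tau(\partial^v E)=\tau(\partial^v(E\times D^2))$ to identify the residual term $\tau\!\bigl(D(I\times(E_1\cap E_2))\bigr)$ with $\tau(\partial^v(E_1\cap E_2))$. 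No induction/transfer for coefficients and no sign local systems enter.

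Your branched--covering proof of the displayed identity, however, has a real gap. After cutting along the normal $S^1$--bundle you need
\[
\tau(D\tilde U,\pi^*\F)\;=\;\tau(DU,\F)+\tau(DU,\F\otimes\sigma),
\]
and then, for the bookkeeping to close up to $2\tau(DE_i)-\tau(\partial^v E_0)$, you need \emph{both} $\tau(DU,\F\otimes\sigma)=\tau(DU,\F)$ and $\tau(DN)=2\,\tau(\partial^v E_0)$. Neither follows from the axioms as stated: the first asks for the torsion to be blind to a genuinely nontrivial local system (small loops linking $\partial^vE_0$ act by $-1$), and your proposed justification---``split $DE_i$ along its seam''---would require the very boundary additivity you are proving, since the pieces no longer meet along their \emph{full} vertical boundaries once the tubular neighbourhood has been removed. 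The second requires the transfer term $\mathrm{tr}_B^{\partial^v E_0}\!\bigl(\tau_{\partial^v E_0}(DN,q^*\F)\bigr)$ to vanish, which is not automatic because $\partial^v E_0$ need not be simply connected and the local system need not be constant there. In short, the branched--cover strategy introduces two new unknowns (the $\sigma$--twisted torsion and the $S^2$--bundle transfer) that the axioms do not let you evaluate directly; the paper's switching argument sidesteps both. If you want to salvage your reduction, note that your key identity is itself an instance of the paper's method: apply the switching lemma to the four copies of $E_i$ making up $DG_i$ (glued along $D\partial_1E_i$) and then invoke the $E\times D^2$ lemma.
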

\begin{proof} We extend the terms, using the defining equations
	\begin{eqnarray*} 
		\tau(E_i,\F_i)					&	:=	&	\frac 1 2 (\tau(DE_i,\F_i\cup_\id\F_i)+\tau(\partial^v E_i,(\F_i)_{|\partial^vE_i}))\\
		\tau(E_1\cup E_2,\F)	&	:=	&	\frac 1 2 (\tau(\partial^v(E_1\cup E_2),\F_{|\partial^v(E_1\cup E_2)})\\
													&	+		&	\tau(D(E_1\cup E_2),\F\cup_\id\F))\\
		\tau(E_1\cap E_2,\F{|E_1\cap E_2})	&	:=	&	\frac 1 2 (\tau(D(E_1\cap E_2),\F_{|E_1\cap E_2}\cup_\id\F_{E_1\cap E_2})\\
													&	+		&	\tau(\partial^v(E_1\cap E_2),\F_{|\partial^v(E_1\cap E_2)})).
	\end{eqnarray*}
Recall that $\partial^vE_i=\partial_0 E_i\cup \partial_1 E_i$, where they meet along $\partial_0 E_i\cap \partial_1 E_i=\partial^v\partial_0E_i=\partial^v\partial_1 E_i$. An application of an earlier Lemma gives 
	\begin{eqnarray*} 
		\tau(\partial^v E_1,\F_{|\partial^v E_1})+\tau(\partial^v E_2,\F_{|\partial^v E_2})	
										&	=	&	\tau(\partial_0 E_1\cup \partial_1E_1,\F_{|\partial_0 E_1}\cup_{\partial^v\partial_0E_1}\F_{|\partial_1E_1})\\
										&	+	&	\tau(\partial_0 E_2\cup \partial_1 E_2,\F_{|\partial_0 E_2}\cup_{\partial^v\partial_0E_2}\F_{|\partial_1E_2})\\
										&	=	&	\tau(\partial_1 E_1\cup \partial_1 E_2,\F_{|\partial_1 E_1}\cup_{\partial^v\partial_0E_1}\F_{|\partial_1E_2})\\
										&	+	&	\tau(\partial_0 E_1\cup \partial_0 E_2,\F_{|\partial_0 E_1}\cup_{\partial^v\partial_0E_1}\F_{|\partial_0E_2})\\
										&	=	&	\tau(\partial^v(E_1\cup E_2),\F_{|\partial^v(E_1\cup E_2)})\\
										&	+	&	\tau(D(E_1\cap E_2),\F_{|E_1\cap E_2}^l\cup_\id\F_{|E_1\cap E_2}^r).
	\end{eqnarray*}
The same Lemma gives furthermore (where the shared boundary is $D(\partial_0E_1)=D(\partial_0 E_2)$)
	\begin{eqnarray*}
		\tau(DE_1,\F)+\tau(DE_2,\F)
										&	=	&	\tau((E_1\cup_{\partial_1E_1}E_1)\cup I\times(E_1\cap E_2),\F)\\
										&	+	&	\tau((E_2\cup_{\partial_1E_2}E_2)\cup I\times(E_1\cap E_2),\F)\\
										&	=	&	\tau((E_1\cup_{\partial_1E_1}E_1)\cup (E_2\cup_{\partial_1 E_2}E_2),\F)\\
										&	+	&	\tau(D(I\times(E_1\cap E_2)),\F)\\
										&	=	&	\tau(D(E_1\cup E_2),\F)+\tau(D(I\times(E_1\cap E_2),\F)).
	\end{eqnarray*}
To unburden notation, we denoted every local system by $\F.$ They are all induced naturally by restriction or gluing along the identity from the given local system $\F$ on $E_1\cup E_2.$ Especially the local system $\F_3$ on $I\times (E_1\cap E_2)$ is constant on $I.$ Furthermore, we have
$$\tau(D(I\times(E_1\cap E_2)),\F_3\cup_\id \F_3)=\tau(\partial^v(D^2\times (E_1\cap E_2)),\F_{ind})=\tau(\partial^v(E_1\cap E_2),\F_{|\partial^v(E_1\cap E_2)}),$$
where $\F_{ind}$ is the induced local system from the previous lemma. Putting all this together gives the Lemma.
\end{proof}

To develop the transfer formula in the boundary case, we first need the following Lemma:

\begin{Lemma}
	If $E=E_1\cup E_2$ is a union of two smooth bundles along their common vertical boundary $\partial^vE_1=\partial^vE_2=E_1\cap E_2,$ we have
		$$\tr_B^E(x)=\tr_B^{E_1}(x|E_1)+\tr_B^{E_2}(x|E_2)-\tr_B^{\partial^v E_1}(x|\partial^v E_1).$$
\end{Lemma}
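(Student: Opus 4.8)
Recall first that the transfer extends to a bundle with vertical boundary $(E,\partial^vE)$, fiber $(F,\partial F)$, by $\tr_B^E(x)=p_*\big((x|E)\cup e(E,\partial^vE)\big)$, where $e(E,\partial^vE)\in H^n(E,\partial^vE;\Z)$ is the vertical Euler class relative to the outward vertical normal field along $\partial^vE$ and $p_*\colon H^{*+n}(E,\partial^vE;\Z)\to H^*(B;\Z)$ is relative fiberwise integration; for closed fiber this is the definition of \S2.1. The plan is to construct a \emph{single} generic fiberwise-nondegenerate vertical vector field $V$ on the closed-fiber bundle $E=E_1\cup E_2$ whose fiberwise zero locus (which represents $e(E)$) splits into three pieces, supported over $E_1$, $\partial^vE_1$, resp.\ $E_2$, that represent $e(E_1,\partial^vE_1)$, $-e(\partial^vE_1)$, resp.\ $e(E_2,\partial^vE_2)$; cup product with $x$ followed by fiberwise integration along $E\to B$ then reads off the three terms on the right, since each piece only involves $x|E_1$, $x|\partial^vE_1$, resp.\ $x|E_2$.

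To build $V$, fix a fiberwise collar $N\cong\partial^vE_1\times[-1,1]$ of $\partial^vE_1=E_1\cap E_2$ in $E$ with $N\cap E_1=\partial^vE_1\times[-1,0]$ and $N\cap E_2=\partial^vE_1\times[0,1]$, and take $V$ to agree with generic fields on $E_1\setminus N$ and $E_2\setminus N$ and to have the product form $V(y,t)=(W(y),g(t)\,\partial_t)$ on $N$, with $W$ a generic vertical field on $\partial^vE_1$ and $g\colon[-1,1]\to\R$ satisfying $g(-1)>0$, $g(1)<0$, vanishing (nondegenerately) only at $t=0$. Then the zeros of $V$ in $E_1\setminus N$ represent $e(E_1,\partial^vE_1)$: homotoping $V$ inside $E_1\cap N$, fixing its values on $\partial^vE_1\times\{-1\}$, to a field equal to the outward vertical normal on $\partial^vE_1$ with no zeros in the collar (replace $g|_{[-1,0]}$ by a positive function) exhibits these as the zeros of a section of $T^vE_1$ trivialized along $\partial^vE_1$ by the outward normal; symmetrically for $E_2\setminus N$. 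The zeros in $N$ lie over $\{W=0\}\times\{0\}\subseteq\partial^vE_1$, each with local index multiplied by the one-dimensional index $\mathrm{ind}_0(g\,\partial_t)=-1$, so this piece represents $-e(\partial^vE_1)$. Passing through Poincar\'e--Lefschetz duality, cup product with $x$, and fiberwise integration gives exactly $\tr_B^E(x)=\tr_B^{E_1}(x|E_1)+\tr_B^{E_2}(x|E_2)-\tr_B^{\partial^vE_1}(x|\partial^vE_1)$. (Equivalently one may invoke that $\tr_B^{(-)}$ is the cohomological Becker--Gottlieb transfer, which is additive for a splitting of the fiber along a two-sided closed hypersurface --- the refinement to transfers of $\chi(F)=\chi(F_1)+\chi(F_2)-\chi(\partial F_1)$.)

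I expect the main obstacle to be the orientation and sign bookkeeping: one must check that a chosen fiber orientation of $F$ restricts to compatible orientations of $F_1$ and $F_2$ and induces the boundary orientation on $\partial F_1$, and that with these choices the correction term genuinely enters with a minus sign (the transfer itself being orientation-independent, as recalled in \S2.1); the model computation on $F=S^1=I\cup I$, where $0=1+1-2$, fixes the signs. A secondary point needing care is the assertion that relative fiberwise integration is compatible with the restriction maps in the relevant degrees, so that the decomposition of the zero locus passes to an honest cohomological identity rather than one valid only modulo lower filtration. Both are routine but must be carried out explicitly.
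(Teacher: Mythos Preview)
The paper states this lemma without proof; it is treated as a standard additivity property of the Becker--Gottlieb transfer (and indeed is proved in the cited references, e.g.\ \cite{Igusa1}). Your argument via a single fiberwise vector field whose zero locus decomposes into three pieces is a correct and self-contained way to establish it, and is essentially the standard geometric proof. The parenthetical remark at the end --- that this is the cohomological refinement of $\chi(F)=\chi(F_1)+\chi(F_2)-\chi(\partial F_1)$ --- is exactly the right intuition, and your vector-field construction makes this precise.

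One minor comment on presentation: when you say the zeros in $N$ ``represent $-e(\partial^vE_1)$,'' you are implicitly identifying a class in $H^{n-1}(\partial^vE_1)$ with its image under the Gysin map into $H^n(E)$ (or dually, viewing the zero locus as a cycle in $E$ supported on $\partial^vE_1\times\{0\}$). This is fine, but since you flag the ``compatibility of relative fiberwise integration with restriction maps'' as a secondary concern, it would be cleaner to phrase the whole argument dually in terms of the zero cycle $[Z(V)]=[Z_1]+[Z_0]+[Z_2]$ in fiberwise homology, cap with $x$, and push down --- this sidesteps the filtration issue you mention. Your sign check via $S^1=I\cup I$ is a good sanity check and correctly pins down the $-1$.
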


\begin{Lemma}[Transfer in the boundary case] 
Let $X\to D\stackrel{q}{\to} E$ be an oriented disk or sphere bundle over a bundle $F\to E\to B$ with local coefficient system $\F$ on $E$. As for the transfer axiom this pulls up to a local coefficient system $q^*\F$ on $D$ and we get
	$$\tau_B(D,q^*\F)=\chi(X)\tau(E,\F)+\tr_B^E(\tau_E(D),q^*\F).$$
\end{Lemma}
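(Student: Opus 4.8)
\emph{Proof plan.} The idea is to reduce the statement to the closed-fiber geometric transfer axiom by unwinding the definition of twisted torsion on bundles with vertical boundary, keeping the trace terms under control with the preceding splitting lemma
\[
\tr_B^E(x)=\tr_B^{E_1}(x|E_1)+\tr_B^{E_2}(x|E_2)-\tr_B^{\partial^vE_1}(x|\partial^vE_1)
\]
and with the naturality axiom. I would first settle the case $X=S^n$ for an arbitrary base bundle $E$ (possibly with vertical boundary), and then obtain the case $X=D^n$ from it using that the fiberwise double of a disk bundle is a sphere bundle. Throughout, every coefficient system is the evident one obtained from $\F$ by restriction, gluing along the identity, or pullback along $q$, and I abbreviate it by a dot.

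Assume first $X=S^n$, so $D=S(V)$ for a rank-$(n+1)$ vector bundle $V$ on $E$. Regarded over $B$, the bundle $D$ has vertical boundary the sphere bundle $S(V|_{\partial^vE})\to\partial^vE$ and fiberwise double the sphere bundle $S(V^l\cup_\id V^r)\to DE$, both of which have closed total fibers over $B$, so the transfer axiom applies to each. Expanding $\tau_B(D,\cdot)$ through the definition of torsion on bundles with vertical boundary and then invoking the axiom, the Euler terms combine, via $\tfrac12\bigl(\tau_B(DE,\cdot)+\tau_B(\partial^vE,\cdot)\bigr)=\tau_B(E,\F)$, into $\chi(S^n)\tau_B(E,\F)$. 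For the two leftover trace terms I apply the splitting lemma to $DE=E^l\cup E^r$ with $x=\tau_{DE}\bigl(S(V^l\cup_\id V^r),\cdot\bigr)$; naturality gives $x|E^l=x|E^r=\tau_E(D,\cdot)$ and $x|\partial^vE=\tau_{\partial^vE}\bigl(S(V|_{\partial^vE}),\cdot\bigr)$, whence $\tr_B^{DE}(x)=2\tr_B^E(\tau_E(D,\cdot))-\tr_B^{\partial^vE}\bigl(\tau_{\partial^vE}(S(V|_{\partial^vE}),\cdot)\bigr)$. Substituting, the two $\partial^vE$-trace contributions cancel and precisely $\tr_B^E(\tau_E(D,q^*\F))$ remains, which is the claim in this case (when $E$ is closed, $D\to B$ has no vertical boundary and the statement is literally the axiom).

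Now assume $X=D^n$, so $D=D(V)$ for a rank-$n$ vector bundle $V$ on $E$. The fiberwise double of $D(V)\to E$ is the $n$-sphere bundle $S(V\oplus\R)\to E$ and its fiberwise boundary is the $(n-1)$-sphere bundle $S(V)\to E$, so the definition of torsion on bundles with vertical boundary yields
\[
\tau_E(D(V),\cdot)=\tfrac12\bigl(\tau_E(S(V\oplus\R),\cdot)+\tau_E(S(V),\cdot)\bigr),
\]
and the same identity with $E$ replaced by $B$. Applying the case already proved to each of $S(V\oplus\R)$ and $S(V)$ and averaging, the coefficient of $\tau_B(E,\F)$ becomes $\tfrac12\bigl(\chi(S^n)+\chi(S^{n-1})\bigr)$, which equals $1=\chi(D^n)$ for every parity of $n$, while linearity of $\tr_B^E$ turns the trace term into $\tr_B^E\bigl(\tfrac12(\tau_E(S(V\oplus\R),\cdot)+\tau_E(S(V),\cdot))\bigr)=\tr_B^E(\tau_E(D(V),q^*\F))$, as required.

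The routine part is the linear bookkeeping with the axioms; the one delicate point is the identity $\tau_B(D(V),\cdot)=\tfrac12\bigl(\tau_B(S(V\oplus\R),\cdot)+\tau_B(S(V),\cdot)\bigr)$ when $E$ itself has a nonempty vertical boundary. Then $D(V)\to B$ has a cornered vertical boundary, with faces $S(V)$ over $E$ and $D(V)|_{\partial^vE}$ over $\partial^vE$ meeting along $S(V|_{\partial^vE})$, and one must verify that the recursive double-plus-boundary recipe defining $\tau_B$ is compatible with this decomposition. This is exactly what Lemma~\ref{additivity} (additivity in the boundary case) is built for, so I expect it to go through, but it is where care is genuinely needed.
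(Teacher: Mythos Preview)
Your argument is correct and uses the same two ingredients as the paper's proof: doubling $E$ to reduce to the closed-fiber transfer axiom, and the identity $\tau(D(\xi))=\tfrac12\bigl(\tau(S^n(\xi))+\tau(S^{n-1}(\xi))\bigr)$ to pass from spheres to disks. The only structural difference is the order: the paper first establishes the disk case over a \emph{closed} $E$ (where the double-plus-boundary identity is literally the definition, no corners), and only then treats $E$ with boundary for spheres and disks simultaneously by applying the already-proved closed case to $D^l\cup D^r$ over $DE$ and to $D^l\cap D^r$ over $\partial^vE$. You reverse this, handling $E$ with boundary first for spheres and then deducing the disk case for general $E$. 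The ``delicate point'' you flag---that $\tau_B(D(V))=\tfrac12\bigl(\tau_B(S(V\oplus\R))+\tau_B(S(V))\bigr)$ when $E$ has boundary---is indeed resolved by Lemma~\ref{additivity} applied to the splitting $S(V\oplus\R)=D(V)^+\cup_{S(V)}D(V)^-$, exactly as you anticipate; the paper's ordering sidesteps this particular corner issue but pays for it with the parallel bookkeeping in its final paragraph. Either route works.
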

\begin{proof} We first consider the case where $F$ does not have a boundary and is therefore closed. The case where $X$ is closed, and so a sphere bundle, is simply the transfer axiom. So we are just considering the case $X=D^n$ for the $n$-dimensional linear disk bundle $D=D(\xi)$ and $F$ closed. We have the following two examples of the original transfer axiom (using $S^n(\xi)=DD(\xi)$)
	\begin{eqnarray*}
		\tau_B(S^n(\xi),(q^*\F)^l\cup_\id (q^*\F)^r)&	=	&	\chi(S^n)\tau(E,\F)+\tr_B^E(\tau_E(S^n(\xi)),(q^*\F)^l\cup_\id (q^*\F)^r)\\
		\tau_B(S^{n-1}(\xi),(q^*F)_{|S^{n-1}(\xi)})	&	=	&	\chi(S^{n-1})\tau(E,\F)+\tr_B^E(\tau_E(S^{n-1}(\xi)),(q^*\F)_{|S^{n-1}(\xi)})
	\end{eqnarray*}
Taking half of the vertical sums here we get
	\begin{eqnarray*}
		\tau_B(D(\xi),q^*D(\xi))	&	=	&	\tau(E,\F)+\tr_B^E(\frac 1 2(\tau_E(S^n(\xi),(q^*\F)^l\cup_\id (q^*\F)^r)\\
															&	+	&	\tau_E(S^{n-1},(q^*\F)_{S^{n-1}(\xi)})))\\
										&	=	&	\chi(D(\xi))\tau(E,\F)+\tr_B^E(\tau_E(D(\xi)),q^*\F),
	\end{eqnarray*}
as desired.\\
Now we are turning to the case, where $F$ has a boundary and $X$ is either a sphere or disk bundle. Write $DE=E^l\cup E^r$ for a left and right copy $E^l$ and $E^r$ of $E$ that meet along their vertical boundary. Name the disk bundles over $E^l$ and $E^r$ by $D^l$ and $D^r$ meeting at $D^l\cap D^r=q^{-1}(\partial^v E)$, where $q:D\to E$ denotes the bundle map. Now we can use the following transfer formulas (since they resemble the case, where $F$ is close)
	\begin{eqnarray*}	
		\tau_B(D^l\cup D^r,(q^*\F)^l\cup_\id (q^*\F)^r)	&	=	&	\chi(X)\tau(E^l\cup E^r,\F^l\cup_\id\F^r)\\
																					&	+	&	\tr_B^{E^l\cup E^r}(\tau_{E^l\cup E^r}(D^l\cup D^r,(q^*\F)^l\cup_\id (q^*\F)^r))\\
		\tau_B(D^l\cap D^r,q^*F_{|D^l\cap D^r})	&	=	&	\chi(X)\tau(E^l\cap E^r,\F_{|E^l\cap E^r})\\
																			&	+	&	\tr_B^{E^l\cap E^r}(\tau_{E^l\cap E^r}(D^l\cap D^r,q^*\F_{|D^l\cap D^r})).
	\end{eqnarray*}
Next we can use the additivity in the boundary case and the trace formula and by taking half of the vertical sum, we get
	$$\tau_B(D,q^*\F)=\chi(X)\tau(E,\F)+\tr_B^E(\tau_E(D),q^*\F).$$
\end{proof}

\subsection{Relative Torsion}

Now we turn again to  bundle pairs.

\begin{Definition}[relative Torsion]
For a  bundle pair $(F,\partial_0)\to (E,\partial_0)\to B$ with local coefficient system $\F$ on $E$ we define the relative torsion to be
	$$\tau(E,\partial_0, \F):=\tau(E,\F)-\tau(\partial_0 E,\F_{|\partial_0 E}).$$
\end{Definition}

We get the following Propositions, which are easy to prove:

\begin{Proposition}[relative additivity] Suppose $E\to B$ is a smooth bundle with local coefficient system $\F,$ which can be written as the union of two subbundles $E=E_1\cup E_2,$ which meet along a subbundle of their respective vertical boundaries $E_1\cap E_2=\partial_0 E_2\subseteq \partial^v E_1$. Let $\partial^v E_1=\partial_0 E_1\cup\partial_1E_1$ be a decomposition of $\partial^v E_1$, so that $\partial_0E_2\subseteq \partial_1 E_1$ and $(E_i,\partial_0)\to B$, $i=1,2$ are  smooth bundle pairs. Then $(E,\partial_0 E)\to B$ is  and
	$$\tau(E_1\cup E_2,\partial_0 E_1,\F)=\tau(E_1,\partial_0,\F_{|E_1})+\tau(E_2,\partial_0,\F_{|E_2}).$$
\end{Proposition}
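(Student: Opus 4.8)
The plan is to reduce the relative additivity to the absolute additivity in the boundary case (Lemma~\ref{additivity}) by unwinding the definition of the relative torsion $\tau(E,\partial_0,\F)=\tau(E,\F)-\tau(\partial_0E,\F_{|\partial_0E})$. The point that makes Lemma~\ref{additivity} applicable, despite the apparent mismatch of hypotheses (there $E_1\cap E_2$ equals the $\partial_0$-part of each $E_i$, whereas here $E_1\cap E_2=\partial_0E_2\subseteq\partial_1E_1$), is the observation that the torsion $\tau(E_1,\F_{|E_1})$ of a bundle with vertical boundary depends only on $\partial^vE_1$ and not on any chosen splitting $\partial^vE_1=\partial_0E_1\cup\partial_1E_1$: this is immediate from the defining formula, since the fiberwise double $DE_1=E_1^l\cup_{\id}E_1^r$ is glued along all of $\partial^vE_1$.

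First I would verify the structural assertions. Since $E_1\cap E_2=\partial_0E_2\subseteq\partial_1E_1$, the subbundle $\partial_0E_1$ meets $E_2$ only in its own vertical boundary, so it persists as a subbundle of $\partial^vE$, and
	$$\partial^v(E_1\cup E_2)=\partial_0E_1\cup\overline{\partial_1E_1\setminus\partial_0E_2}\cup\partial_1E_2,$$
with the three pieces meeting along common boundaries. Hence $(E,\partial_0E)\to B$ with $\partial_0E:=\partial_0E_1$ is a smooth bundle pair. Next I would re-split the vertical boundary of $E_1$ as $\partial^vE_1=(\partial_0E_2)\cup\bigl(\partial_0E_1\cup\overline{\partial_1E_1\setminus\partial_0E_2}\bigr)$; by the remark above this does not change $\tau(E_1,\F_{|E_1})$, but it exhibits $(E_1,\partial_0E_2)$ as a bundle pair whose $\partial_0$-part is exactly $E_1\cap E_2$. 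Together with $(E_2,\partial_0E_2)$, whose $\partial_0$-part is $\partial_0E_2=E_1\cap E_2$ by hypothesis, and with all local systems induced by restriction and gluing-along-the-identity from $\F$, the hypotheses of Lemma~\ref{additivity} are met, giving
	$$\tau(E_1\cup E_2,\F)=\tau(E_1,\F_{|E_1})+\tau(E_2,\F_{|E_2})-\tau(\partial_0E_2,\F_{|\partial_0E_2}).$$

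The computation then finishes the argument. Using $\partial_0(E_1\cup E_2)=\partial_0E_1$, the left-hand side of the Proposition is
	$$\tau(E_1\cup E_2,\F)-\tau(\partial_0E_1,\F_{|\partial_0E_1})=\tau(E_1,\F_{|E_1})+\tau(E_2,\F_{|E_2})-\tau(\partial_0E_1,\F_{|\partial_0E_1})-\tau(\partial_0E_2,\F_{|\partial_0E_2}),$$
after substituting the displayed identity. Expanding each of $\tau(E_1,\partial_0,\F_{|E_1})$ and $\tau(E_2,\partial_0,\F_{|E_2})$ by definition yields precisely these same four terms, so the two sides agree.

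The only real obstacle is the purely geometric bookkeeping behind the two ``structural assertion'' steps: checking that the boundary pieces $\partial_0E_1$, $\overline{\partial_1E_1\setminus\partial_0E_2}$ and $\partial_1E_2$ fit together as subbundles meeting along common vertical boundaries, so that every object appearing above is a genuine bundle pair, and recording that the relevant restrictions of $\F$ are canonically identified along the gluing loci. This parallels the corresponding non-twisted argument in \cite{Igusa1} almost verbatim, so once the local-system identifications are written down there is nothing genuinely new to prove.
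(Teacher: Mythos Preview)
Your proof is correct and follows the same approach as the paper: both sides are expanded via the definition of relative torsion and Lemma~\ref{additivity} to the common expression $\tau(E_1,\F_{|E_1})+\tau(E_2,\F_{|E_2})-\tau(E_1\cap E_2,\F_{|E_1\cap E_2})-\tau(\partial_0 E_1,\F_{|\partial_0E_1})$. You are in fact more careful than the paper, which applies Lemma~\ref{additivity} without commenting on the hypothesis mismatch; your observation that the absolute torsion $\tau(E_1,\F_{|E_1})$ is insensitive to the chosen $\partial_0/\partial_1$ splitting of $\partial^vE_1$ is exactly what justifies that step.
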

\begin{proof} By Proposition \ref{additivity} both sides of the equation are equal to $$\tau(E_1,\F_{|E_1})+\tau(E_2,\F_{|E_2})-\tau(E_1\cap E_2,\F_{|E_1\cap E_2})-\tau(\partial_0 E_1,\F_{|\partial_0E_1})$$.
\end{proof}

We also have the following proposition without proof.

\begin{Proposition}
Suppose $(E,\partial_0)\to B$ is a union of two  bundle pairs $(E_i,\partial_0E_i)$ with local coefficient system $\F$. That means we have $E=E_1\cup E_2$ and $\partial_0 E=\partial_0 E_1\cup \partial_0 E_2$ with $E_1\cap E_2\subseteq \partial_1 E_1\cap \partial_1 E_2$. Let $E_0=E_1\cap E_2$ and $\partial_0E_0=E_0\cap\partial_0E$ and suppose $(E_0,\partial_0)$ is a  bundle pair. Then $(E,\partial_0)$ is  and 
	$$\tau(E,\partial_0,\F)=\tau(E_1,\partial_0,\F_{|E_1})+\tau(E_2,\partial_0,\F_{|E_2})-\tau(E_0,\partial_0,\F_{|E_0}).$$
\end{Proposition}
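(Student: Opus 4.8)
The statement in question is the last Proposition in the excerpt: if $(E,\partial_0)\to B$ decomposes as a union of two bundle pairs $(E_i,\partial_0 E_i)$ meeting in $E_0=E_1\cap E_2\subseteq \partial_1 E_1\cap\partial_1 E_2$ (with $(E_0,\partial_0)$ again a bundle pair), then the relative torsion is additive with an inclusion–exclusion correction term $\tau(E_0,\partial_0,\F_{|E_0})$. The plan is to reduce everything to the definition of relative torsion, $\tau(E,\partial_0,\F)=\tau(E,\F)-\tau(\partial_0 E,\F_{|\partial_0 E})$, and then apply the absolute additivity statement (Lemma~\ref{additivity}) twice: once to $E=E_1\cup E_2$ and once to the decomposition of the bottom boundary $\partial_0 E=\partial_0 E_1\cup\partial_0 E_2$.

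First I would check that $(E,\partial_0 E)$ is indeed a bundle pair: $\partial^v E$ must split as $\partial_0 E\cup\partial_1 E$ with $\partial_1 E=(\partial_1 E_1\cup\partial_1 E_2)\setminus \mathrm{int}(E_0)$, meeting $\partial_0 E$ along the appropriate codimension-one subbundle. This is a matter of tracing the hypothesis $E_1\cap E_2\subseteq\partial_1 E_1\cap\partial_1 E_2$ and the fact that each $(E_i,\partial_0)$ is a pair, and it is the kind of gluing bookkeeping that mirrors the preamble to Lemma~\ref{additivity}. Second, I would apply Lemma~\ref{additivity} to $E=E_1\cup E_2$ with intersection $E_0$:
$$\tau(E,\F)=\tau(E_1,\F_{|E_1})+\tau(E_2,\F_{|E_2})-\tau(E_0,\F_{|E_0}).$$
Third, I would apply the same Lemma to the subbundle decomposition of the lower boundary. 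Here the hypothesis that $(E_0,\partial_0)$ is itself a bundle pair is what guarantees that $\partial_0 E_1$ and $\partial_0 E_2$ meet along $\partial_0 E_0=E_0\cap\partial_0 E$ in the way Lemma~\ref{additivity} requires, giving
$$\tau(\partial_0 E,\F_{|\partial_0 E})=\tau(\partial_0 E_1,\F_{|\partial_0 E_1})+\tau(\partial_0 E_2,\F_{|\partial_0 E_2})-\tau(\partial_0 E_0,\F_{|\partial_0 E_0}).$$
Subtracting the second displayed equation from the first and regrouping the six terms into three differences of the form $\tau(E_i,\F_{|E_i})-\tau(\partial_0 E_i,\F_{|\partial_0 E_i})=\tau(E_i,\partial_0,\F_{|E_i})$ (for $i=0,1,2$) yields exactly the claimed formula.

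The only real obstacle is verifying that every intersection and boundary piece that appears is genuinely a bundle pair, so that Lemma~\ref{additivity} actually applies at each step — in particular that $\partial_0 E_0$ is a legitimate "$\partial_0$" for $E_0$, and that $\partial_0 E_1$, $\partial_0 E_2$ overlap cleanly inside $\partial_0 E$. None of this is deep; it is the same manifolds-with-corners gluing geometry already used (and mostly suppressed) in the proofs of the earlier additivity lemmas, which is presumably why the authors label the Proposition as one "which we state without proof." I would therefore present the argument as: (i) a one-sentence check of the bundle-pair hypotheses, (ii) the two invocations of Lemma~\ref{additivity}, and (iii) the subtraction, with all local systems understood to be the evident restrictions of $\F$.
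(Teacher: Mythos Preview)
Your approach is correct and is precisely the routine argument the paper has in mind: the authors explicitly state this Proposition ``without proof,'' and your reduction --- expand each relative torsion via its definition, apply Lemma~\ref{additivity} once to $E=E_1\cup E_2$ and once to $\partial_0 E=\partial_0 E_1\cup\partial_0 E_2$, then subtract and regroup --- is exactly the intended verification. The only care needed, which you already flag, is that when invoking Lemma~\ref{additivity} for $E=E_1\cup E_2$ you are temporarily using a \emph{different} bundle-pair structure on each $E_i$ (namely with $E_0$ playing the role of ``$\partial_0$''), which is legitimate because the absolute torsion $\tau(E_i,\F_{|E_i})$ is independent of that choice.
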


To state the transfer axiom in the relative case, we need the relative transfer:
	$$\tr_B^{(E,\partial_0)}:H^*(E;\R)\to H^*(B;\R),$$
given by
	$$\tr_B^{(E,\partial_0)}=p_*(x\cup e(E,\partial_0)),$$
where 
	$$p_*:H^{*+k}(E,\partial^v E;\R)\to H^*(B;\R)$$
is the push-down operator and 
	$$e(E,\partial_0)\in H^k(E,\partial^vE;\R)$$
is the relative Euler class given by the pull-back of the Thom class of the vertical tangent bundle $T^vE$ along any vertical tangent vector field which is nonzero along the vertical boundary $\partial^v E$ and which points inwards along $\partial_0 E$ and outward along $\partial_1 E$. The relative transfer satisfies the following two equations for any $x\in H^*(E;\R)$:
	\begin{eqnarray*}
		\tr_B^{(E,\partial_0)}(x)	&	=	&	\tr_B^E(x)-\tr_B^{\partial_0E}(x|\partial_0 E)\\
		\tr_B^{(E,\partial_1)}		&	=	&	(-1)^k\tr_B^{(E,\partial_0)}.
	\end{eqnarray*}
And $\tr_B^{(E,\partial_0)}\circ p^*:H^*(B)\to H^*(B)$ is multiplication by the relative Euler characteristic of the fiber pair $(F,\partial_0)$:
	$$\chi(F,\partial_0):=\chi(F)-\chi(\partial_0F).$$

\begin{Proposition}[relative transfer]
Let $(F,\partial_0)\to(E,\partial_0)\to B$ and $(X,\partial_0)\to (D,\partial_0)\stackrel q \to E$ be  bundle pairs with local system $\F$ on $E$, so that the second bundle is an oriented linear $S^n$ or $D^n$ bundle with $\partial_0 X=S^{n-1}, D^{n-1}$ or $\emptyset$. Then
	$$\tau_B(D,\partial_0 D\cup q^{-1}\partial_0 E,q^*\F)=\chi(X,\partial_0)\tau(E,\partial_0,\F)
				+\tr_B^{E,\partial_0}(\tau_E(D,\partial_0,q^*\F)).$$
\end{Proposition}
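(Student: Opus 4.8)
The plan is to reduce the relative transfer formula to the two transfer statements already available, namely the geometric transfer axiom (for closed fibers) and the Lemma "Transfer in the boundary case", combined with the relative additivity of both the torsion and the trace. The key is that everything in sight — the relative torsion $\tau(E,\partial_0,\F)$, the relative transfer $\tr_B^{(E,\partial_0)}$, and the relative Euler characteristic $\chi(F,\partial_0)$ — is defined as a \emph{difference} of an absolute term and a $\partial_0$-term, so the relative statement should follow by subtracting the absolute transfer statement over $(E,\F)$ from the absolute transfer statement over $(\partial_0 E,\F_{|\partial_0 E})$.

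Concretely, first I would apply the Lemma "Transfer in the boundary case" to the bundle $F\into E\to B$ with the sphere or disk bundle $X\into D\stackrel q\to E$, obtaining
	$$\tau_B(D,q^*\F)=\chi(X)\tau(E,\F)+\tr_B^E(\tau_E(D,q^*\F)).$$
Next I would apply the same Lemma to the restricted bundle $\partial_0 F\into \partial_0 E\to B$ together with the restricted sphere/disk bundle $\partial_0 X\into q^{-1}(\partial_0 E)\stackrel q\to \partial_0 E$, which is again an oriented linear $S^{n-1}$, $D^{n-1}$ or trivial bundle, giving
	$$\tau_B(q^{-1}(\partial_0 E),q^*\F_{|})=\chi(\partial_0 X)\tau(\partial_0 E,\F_{|\partial_0 E})+\tr_B^{\partial_0 E}(\tau_{\partial_0 E}(q^{-1}(\partial_0 E),q^*\F_{|})).$$
Then I would subtract the second identity from the first. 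On the left one uses the definition of relative torsion of the bundle pair $(D,\partial_0 D\cup q^{-1}(\partial_0 E))$, together with the observation that its $\partial_0$-part is exactly $q^{-1}(\partial_0 E)$ after applying the absolute torsion-with-vertical-boundary definition; this is where one has to be a little careful about how $\partial_0 D$ and $q^{-1}(\partial_0 E)$ fit together as the two pieces $\partial_0$ and $\partial_1$ of the vertical boundary of $D$. On the right-hand side, the Euler characteristic terms combine to $\chi(X)\tau(E,\F)-\chi(\partial_0 X)\tau(\partial_0 E,\F_{|\partial_0 E})$, which one rewrites as $\chi(X,\partial_0)\tau(E,\partial_0,\F)$ plus a correction of the form $\chi(\partial_0 X)\tau(E,\partial_0,\F)$ — but since $X$ is $S^n$ or $D^n$ one has $\chi(X,\partial_0)=\chi(X)-\chi(\partial_0 X)$ exactly, so this bookkeeping closes up. The trace terms assemble via the identity $\tr_B^{(E,\partial_0)}(x)=\tr_B^E(x)-\tr_B^{\partial_0 E}(x|\partial_0 E)$ into $\tr_B^{(E,\partial_0)}(\tau_E(D,\partial_0,q^*\F))$, using naturality of $\tau_E(D,q^*\F)$ under restriction to the sub-bundle over $\partial_0 E$.

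The main obstacle I expect is purely organizational rather than conceptual: matching up the two natural "boundary" decompositions of the disk bundle $D$. The vertical boundary of $D$ as a bundle over $B$ splits as $\partial_0 D\cup q^{-1}(\partial_0 E)$, with these two pieces meeting along $\partial_0(\partial_0 D)=q^{-1}(\partial^v\partial_0 E)$, and I must check that the relative-torsion definition $\tau(E,\partial_0,\F)=\tau(E,\F)-\tau(\partial_0 E,\F_{|\partial_0 E})$ combined with the vertical-boundary definition of $\tau$ really does expand into the difference of absolute transfer formulas I wrote above without any leftover terms. Once the decomposition is set up correctly, every remaining step is a direct substitution using relative additivity of torsion, the two displayed identities for the relative trace, and the multiplicativity $\tr_B^{(E,\partial_0)}\circ p^*=\chi(F,\partial_0)\cdot\id$; so I would present the proof as "subtract the two boundary-case transfer formulas and reorganize", relegating the $\partial_0 D$ versus $q^{-1}(\partial_0 E)$ bookkeeping to a short explicit check.
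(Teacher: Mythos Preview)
Your overall strategy --- subtract two instances of the already-proved transfer formula and reorganize --- is exactly the paper's approach, but there is a genuine error in your second displayed formula that causes the argument to fall short. The bundle $q^{-1}(\partial_0 E)\to \partial_0 E$ is the \emph{restriction} of $D\to E$ over the sub-bundle $\partial_0 E$, so its fiber is $X$, not $\partial_0 X$. Hence the correct second identity reads
\[
\tau_B(q^{-1}(\partial_0 E),q^*\F_{|})=\chi(X)\,\tau(\partial_0 E,\F_{|\partial_0 E})+\tr_B^{\partial_0 E}\bigl(\tau_{\partial_0 E}(q^{-1}(\partial_0 E),q^*\F_{|})\bigr),
\]
with $\chi(X)$ rather than $\chi(\partial_0 X)$. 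Subtracting this from your first identity therefore yields
\[
\tau_B(D,\,q^{-1}\partial_0 E,\,q^*\F)=\chi(X)\,\tau(E,\partial_0,\F)+\tr_B^{(E,\partial_0)}\bigl(\tau_E(D,q^*\F)\bigr),
\]
which is only the special case $\partial_0 X=\emptyset$ of the Proposition (note $\partial_0 D$ is absent from the left side, and $\chi(X,\partial_0)$ and $\tau_E(D,\partial_0,q^*\F)$ have collapsed to their absolute versions on the right). Your bookkeeping paragraph implicitly assumes the Euler-characteristic correction closes up, but with the correct $\chi(X)$ in both lines there is no $\chi(\partial_0 X)$ anywhere to combine into $\chi(X,\partial_0)$.

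The fix is to run the subtraction \emph{twice}, once in each ``direction'', which is precisely what the paper does. First establish the $\partial_0 X=\emptyset$ case as above. Then apply that case to the two bundles $D\to E$ (fiber $X$) and $\partial_0 D\to E$ (fiber $\partial_0 X$), obtaining
\[
\tau_B(D,\,q^{-1}\partial_0 E,\,\F)=\chi(X)\,\tau(E,\partial_0,\F)+\tr_B^{(E,\partial_0)}(\tau_E(D,\F)),
\]
\[
\tau_B(\partial_0 D,\,\partial_0 D\cap q^{-1}\partial_0 E,\,\F)=\chi(\partial_0 X)\,\tau(E,\partial_0,\F)+\tr_B^{(E,\partial_0)}(\tau_E(\partial_0 D,\F)),
\]
and subtract. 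Now the Euler characteristics do combine to $\chi(X,\partial_0)$, the trace terms combine to $\tr_B^{(E,\partial_0)}(\tau_E(D,\partial_0,q^*\F))$, and a short additivity computation on the left gives $\tau_B(D,\partial_0 D\cup q^{-1}\partial_0 E,q^*\F)$. So the ``organizational obstacle'' you anticipated is real and is resolved not by a single subtraction but by a two-stage one.
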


\begin{proof}
We already proved the case where both $\partial_0F$ and $\partial_0 X$ are empty. The case where $\partial_0 X$ is empty follows easily from
	$$\tau_B(D,q^{-1}\partial_0E,q^*\F)=\tau_B(D,q^*\F)-\tau_B(q^{-1}\partial_0 E,q^*\F_{|q^{-1}\partial_0E}):$$
The first term on the right hand side is 
	$$\tau_B(D,q^*\F)=\chi(X)\tau(E,\F)+\tr_B^E(\tau_E(D,q^*\F))$$
and the second term gives 
	$$\tau_B(q^{-1}\partial_0 E,q^*\F_{|q^{-1}\partial_0 E})=\chi(X)\tau_B(\partial_0E,\F_{|\partial_0E})+
														\tr_B^{\partial_0E}(\tau_{\partial_0 E}(q^{-1}\partial_0E,q^*\F_{|q^{-1}\partial_0E})).$$
Using $\chi(X,\partial_0 X)=\chi(X)$ (since $\partial_0 X=\emptyset$) and the formula $\tr_B^{E,\partial_0}(x)=\tr_B^E(x)-\tr_B^{\partial_0 E}(x|\partial_0 E)$ gives the Lemma in that case.\\
The general case follows from the following two examples of the $\partial_0 X=\emptyset$ case (Again, we are unburdening notation by writing $\F$ for any local system induced naturally by pulling back along $q$ and restricting):
	\begin{eqnarray*}
		\tau_B(\partial_0D,\partial_0D\cap q^{-1}\partial_0E,\F)	&	=	&	\chi(\partial_0X)\tau(E,\partial_0,\F)
																							+\tr_B^{(E,\partial_0)}(\tau_E(\partial_0 D,\F))\\
		\tau_B(D,q^{-1}\partial_0E,\F)	&	=	&	\chi(X)\tau(E,\partial_0,\F)+\tr_B^{(E,\partial_0)}(\tau_E(D,\F))
	\end{eqnarray*}
Taking the second formula minus the first gives on the left hand side using the general additivity
	\begin{eqnarray*}
		\tau_B(D,q^{-1}\partial_0 E,\F)-\tau_B(\partial_0 D,\partial_0D\cap q^{-1}\partial_0 E,\F)
										&	=	&	\tau_B(D,\F)+\tau_B(\partial_0D\cap q^{-1}\partial_0E,\F)\\
										&	-	&	\tau_B(q^{-1}\partial_0 E,\F)-\tau_B(\partial_0 D,\F)\\
										&	=	&	\tau_B(D,\F)-\tau_B(\partial_0 D\cup q^{-1}\partial_0 E,\F)\\
										&	=	&	\tau_B(D,\partial_0 D\cup q^{-1}\partial_0 E,\F)
	\end{eqnarray*}
and on the right hand side the desired term to prove the Lemma.
\end{proof}

\begin{Remark}
Note that we do not have an analogous to the product formula (Corollary 5.10 in \cite{Igusa1}).
\end{Remark}

We still have this important corollary from the transfer formula:

\begin{Corollary}[stability theorem]

If $(E,\partial_0)\to B$ is a  smooth bundle pair with local system $\F$ on $E$, then so is $(E\times D^n,\partial_0E\times D^n)$ and the relative torsion is the same:
	$$\tau(E\times D^n,\partial_0E\times D^n,\F\times \1)=\tau(E,\partial_0,\F),$$
where $\F\times\1$ is the local system constant on $D^n. $
\end{Corollary}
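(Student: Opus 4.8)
The plan is to apply the relative transfer formula of the preceding Proposition to the trivial disk bundle $q:=\pr_E:E\times D^n\to E$, regarded as a linear $D^n$-bundle over the given pair $(F,\partial_0)\into(E,\partial_0)\to B$, with fiber pair $(X,\partial_0)=(D^n,\emptyset)$. First one checks that $(E\times D^n,\partial_0 E\times D^n)\to B$ really is a bundle pair: its vertical boundary is $\partial^v(E\times D^n)=(\partial^v E\times D^n)\cup(E\times S^{n-1})$, which decomposes as $\partial_0:=\partial_0 E\times D^n$ and $\partial_1:=(\partial_1 E\times D^n)\cup(E\times S^{n-1})$, and these meet along $\partial^v(\partial_0 E\times D^n)=(\partial^v\partial_0 E\times D^n)\cup(\partial_0 E\times S^{n-1})$. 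With the choice $\partial_0 X=\emptyset$ we get $\partial_0 D=\emptyset$, so $\partial_0 D\cup q^{-1}\partial_0 E=\partial_0 E\times D^n$; moreover $\chi(X,\partial_0)=\chi(D^n)=1$ and $q^*\F=\F\times\1$ because $D^n$ is contractible.

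Feeding this into the Proposition yields
$$\tau_B(E\times D^n,\partial_0 E\times D^n,\F\times\1)=\tau(E,\partial_0,\F)+\tr_B^{E,\partial_0}\bigl(\tau_E(E\times D^n,\emptyset,\F\times\1)\bigr),$$
and since the relative torsion of a pair with empty $\partial_0$ is the ordinary torsion, $\tau_E(E\times D^n,\emptyset,\F\times\1)=\tau_E(E\times D^n,\F\times\1)$. Thus the Corollary follows once we know that the correction term vanishes, for which it suffices to prove
$$\tau_E(E\times D^n,\F\times\1)=0,$$
that is, that the torsion over the base $E$ of the trivial $D^n$-bundle equipped with the local system pulled back from $E$ is trivial.

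For this last point I would argue by naturality. Since $\F$ is a finite local system on $E$, its holonomy representation factors through a finite group $\pi$; choose a closed manifold $M$ with $\pi_1 M$ finite, a finite local system $\F_M$ on $M$, and a map $g:E\to M$ with $g^*\F_M\cong\F$ — one may take $M$ to be a high-dimensional closed-manifold approximation of $B\pi$, so that $M\to B\pi$ is highly connected and hence $H^{2k}(M;\R)\cong H^{2k}(B\pi;\R)=0$ (real cohomology of a space with finite fundamental group vanishes in positive degrees). Then $(E\times D^n\to E,\ \F\times\1)$ is the pullback along $g$ of $(M\times D^n\to M,\ \F_M\times\1)$, and naturality — which passes to bundles with vertical boundary, since the torsion of such a bundle is built from torsions of its fiberwise double and its vertical boundary, both of which have closed fibers — gives
$$\tau_E(E\times D^n,\F\times\1)=g^*\,\tau_M(M\times D^n,\F_M\times\1)\ \in\ g^*\,H^{2k}(M;\R)=0.$$

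The only step that is not pure bookkeeping with the relative transfer formula is this final vanishing; it is exactly where the standing hypothesis that all fundamental groups in sight are finite enters, and it is the precise form of the assertion "$\tau(DE\times S^1,\dots)=0$ by the transfer axiom" used in the proof of an earlier Lemma. Accordingly, I expect the verification that $\tau_E(E\times D^n,\F\times\1)=0$, rather than the transfer computation itself, to be the point requiring care.
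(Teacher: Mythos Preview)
Your proof is correct and follows exactly the route the paper intends: the Corollary is stated immediately after the relative transfer Proposition with no further argument, and your application of that formula with $(X,\partial_0)=(D^n,\emptyset)$, $q=\pr_E$, $\chi(D^n)=1$, $\partial_0 D\cup q^{-1}\partial_0 E=\partial_0 E\times D^n$ is precisely what is meant.

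Where you go beyond the paper is in justifying the vanishing of the correction term $\tau_E(E\times D^n,q^*\F)$. The paper takes this for granted here (and likewise in Lemma~4.4, where ``$\tau(DE\times S^1,\dots)=0$ by the transfer axiom'' hides the same issue). You are right that Remark~2.3 does not settle it: that remark shows $\tau(B\times F,\F)=0$ for arbitrary $\F$ only when the \emph{base} is simply connected, and here the base is $E$, which need not be. Your naturality argument---pulling the trivial $D^n$-bundle with system $q^*\F$ back from a closed manifold $M$ approximating $B\pi$ with $H^{2k}(M;\R)=0$---is a clean and correct fix. Two minor remarks: what you actually use is finiteness of the holonomy group of $\F$ (a standing hypothesis), not finiteness of $\pi_1 E$; and if you first reduce to cyclic holonomy via Artin induction as in Section~5.1, you can take $M=L_n^{2N+1}$ directly and avoid invoking a general manifold model for $B\pi$.
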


\section{Calculations for Higher Twisted Torsion}

\subsection{Reduction of the Representation}

In the following we will simplify the local system of the bundles.\\
Let $F\into E\to B$ be a fiber bundle and $\F$ a finite local system on $E.$ This corresponds to its holonomy cover $\tilde E\to E$ with finite transition group $G$ and representation $\rho:G\to U(m).$ On the other hand every finite covering $\tilde E\stackrel{G}{\to}E$ with representation $\rho:G\to U(m)$ gives us a local system $\F_{\rho}$ as the sections of the bundle $\tilde E\times_G\C^m\to E$ where $G$ acts on $\C^m$ via $\rho.$ This construction is a 1-1-correspondence.\\
Now let $H\subseteq G$ be a subgroup. From the covering $\tilde E\stackrel G \to E$ we get coverings $\pi_H:\tilde E/H\to E$ and $\tilde E\stackrel H \to \tilde E/H.$ Suppose we have a representation $\rho_H:H\to U(m)$ and thereby get a local system $\F_{\rho_H}$ on $\tilde E/H.$ Then we can either form the induced representation $Ind_H^G(\rho_H):G\to U(m)$ and its corresponding local system $\F_{Ind_H^G(\rho_H)}$ on $E$ or the local system $\pi_*\F_{\rho_H}$ on $E$ given by the push-down of the local system $\F_\rho.$ It follows from an easy calculation that
	$$\F_{Ind_H^G(\rho_H)}=\pi_*\F_{\rho_H}.$$
Let $\F$ be again a local system on $E$ corresponding to a finite covering $\tilde E\stackrel G \to E$ with representation $\rho:G\to U(m).$ Let $\mathcal H=\{H_i\}$ be the finite set of cyclic subgroups $H_i$ of $G.$ By Artin's induction theorem, we can write the character of $\rho$ rationally as linear combination of characters of one dimensional representations. Since we are working over $\C,$ we therefore can write $\rho$ rationally as a linear combination of one dimensional representations $\lambda_i:H_i\to U(1)$ and inductions thereof. Concretely we have
	$$n\rho\cong\bigoplus_in_iInd_{H_i}^G(\lambda_i)$$
with $n,n_i\in \Z.$\\
Let $\tau$ be a twisted torsion invariant and $\pi_i:\tilde E/H\to E$ be a covering, then we have using the transfer of coefficient axiom and the calculation above
	$$n\tau(E,\F)=\sum_in_i\tau(E,\F_{Ind_{H_i}^G(\lambda_i)})=\sum_in_i\tau(E,\pi_*\F_{\lambda_i})=\sum_i n_i\tau(\tilde E/H_i,\F_{\lambda_i})\in H^{2k}(B;\R).$$
Therefore it suffices for the rest of the paper to work with local systems with $n$-fold holonomy covers with cyclic transition group $\Z/n.$\\
\\
Now let $F\to E\to B$ be a bundle with local system $\F$ on $E$ and the base $B$ having a finite fundamental group. We have the universal covering $q:\tilde B\to B$ and pulling back $E$ along $q$ gives a bundle $\tilde E:=q^*E\to \tilde B$ with local system $\tilde \F:=q^*\F.$ Naturality implies 
	$$\tau(\tilde E,\tilde \F)=q^*\tau(E,\F)\in H^{2k}(\tilde B;\R).$$
Furthermore we know that $q^*:H^{2k}(B;\R)\to H^{2k}(\tilde B;\R)$ is injective. By this construction it suffices to prove the main theorem only on bundles with simply connected base.\\
For such a bundle $F\into E\to B$ the induced local system $\F_x$ on the fiber $F_x$ over $x\in B$ does not depend on $x$ and completely determines the local system $\F$ on $E.$ Therefore we will always look at bundles $F\into E\to B$ with simply connected base $B$ and local system $\F$ on $F$ (and thereby $E$).

\subsection{Twisted Torsion for $S^1$-Bundles}

Our goal in this section is to calculate all higher torsion invariants on $S^1$-bundles. To be precise, we want to show the following theorem:
\begin{Theorem}
For every $S^1$-bundle $S^1\into E\to B$ with $B$ simply connected and local system $\F$ on $E$ with $n$-fold holonomy cover $\tilde E_n\to E$ every twisted torsion invariant $\tau$ is given by
	$$\tau(E,\F)=a\tau^{IK}(E,\F),$$
where $a$ is the scalar defined earlier.
\end{Theorem}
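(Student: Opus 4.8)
The plan is to reduce an arbitrary $S^1$-bundle $E \to B$ with finite holonomy local system to the universal examples for which the scalars were computed, using the structure theory of $S^1$-bundles together with the axioms. First I would observe that, after the reductions in Section 5.1, it suffices to treat the case where $B$ is simply connected (indeed, since $\pi_1 B$ is finite, we may further assume $B$ is simply connected after pulling back along the universal cover, because $q^*$ is injective on real cohomology) and where the holonomy cover of $\F$ has cyclic transition group $\Z/n$, so $\F = \F_{\lambda}$ for a one-dimensional character $\lambda : \Z/n \to U(1)$ of the form $z \mapsto \zeta_n^j$. An $S^1$-bundle over a simply connected base is classified by its Euler class $c \in H^2(B;\Z)$, hence is the pull-back of the universal bundle $S^1(\lambda) = S^\infty \to \C\P^\infty$ along a classifying map $f : B \to \C\P^\infty$ inducing $c$ in degree two. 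By naturality (Axiom 1), it then suffices to verify the identity $\tau(E,\F) = a\,\tau^{IK}(E,\F)$ on the universal bundle with its universal local system; but $\C\P^\infty$ is not a finite-dimensional manifold, so I would instead work with the finite skeleta $S^1 \into L_n^{2N-1} \to \C\P^{N-1}$ (for the nontrivial local system) and $S^1 \into S^{2N-1} \to \C\P^{N-1}$ (for the trivial one), and pass to the limit, exactly as in the construction of $\F_{\zeta_n}$ in Section 3.3.

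The crux is then the two universal computations over $\C\P^{N-1}$ (equivalently, in the limit, over $\C\P^\infty$). First, for the \emph{trivial} local system $\F = \1$: here $\tau(E,\1)$ is the non-twisted torsion of an $S^1$-bundle, and since $M$ vanishes on odd-dimensional fibers (the Proposition following the twisted-MMM theorem), Igusa's non-twisted classification theorem (quoted in Section 3.2) gives $\tau(E,\1) = a\,\tau^{IK}(E,\1)$ for the same scalar $a$ determined by $S^1(\lambda)$ — this is essentially the definition $a = s_1/((-1)^{1+l}\zeta(2l+1))$ in degree $4l$, and in degree $4l+2$ both sides vanish by the triviality axiom. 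Second, for a \emph{nontrivial} character $\lambda = \zeta_n^j$: I would use the transfer/induction-for-coefficients axiom together with additivity for coefficients and Artin's induction theorem (as set up in Section 5.1) to reduce to the generating characters; on $S^1(\lambda)/n = L_n^\infty$ the cohomology of the base is one-dimensional in degree $2k$, so $\tau(L_n^\infty, \F_{\zeta_n^j}) = s_1(j)\, ch_{2k}(\lambda)$, and the Proposition of Section 3.3 gives $\tau^{IK}(L_n^\infty, \F_{\zeta_n^j}) = -n^k L_{k+1}(\zeta_n^j)\, ch_{2k}(\lambda)$. What must be shown is that the ratio $s_1(j)/(-n^k L_{k+1}(\zeta_n^j))$ is independent of $n$ and $j$ and equals the number $a$ above; I expect this to follow by applying the transfer/induction axiom to the covering $S^{2N-1} \to L_n^{2N-1}$, which forces a linear relation among the $s_1(j)$ (for fixed $n$ the $\F_{\zeta_n^j}$ induce up from subgroups to a sum whose torsion is that of the covering bundle $S^1(\lambda)$ with trivial coefficients), matched against the corresponding distribution relations for the polylogarithm $L_{k+1}$.

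The main obstacle, I expect, is pinning down \emph{a priori} that an arbitrary twisted torsion invariant evaluated on $L_n^\infty$ is a constant multiple of $ch_{2k}(\lambda)$ with the \emph{same} constant across all $(n,j)$ — i.e.\ the consistency of the family of scalars $s_1(j,n)$. The additivity-for-coefficients and transfer-for-coefficients axioms give relations among these scalars, but showing they force proportionality to $\tau^{IK}$ rather than merely some abstract distribution on roots of unity requires that the polylogarithm identities satisfied by $\tau^{IK}$ be the \emph{only} ones compatible with the axioms; here is precisely where the "very specific continuity axiom" advertised for Section 5.2 enters, ensuring the scalar depends continuously (hence, together with the Galois/induction relations, rigidly) on $\zeta_n$. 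Once that rigidity is in hand, the equality $\tau(E,\F) = a\,\tau^{IK}(E,\F)$ for all $S^1$-bundles follows by naturality and the limit argument, completing the proof.
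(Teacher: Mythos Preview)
Your outline matches the paper's approach --- reduce to the universal case $S^1(\lambda)/n \to \C\P^\infty$ with $\F_\zeta$, extract the scalar $s_1(\tau,\zeta)$, and use distribution relations plus continuity to force proportionality to $\tau^{IK}$ --- but the consistency step as you describe it does not close.

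The single relation you propose, from the cover $S^{2N-1}\to L_n^{2N-1}$, only says $\sum_{j} s_1(\tau,\zeta_n^j)$ equals the (known) torsion of $S^1(\lambda)$ with trivial coefficients; that is one linear constraint per $n$ and is far too weak to pin down the $s_1$'s up to a global scalar. The paper instead proves the full \emph{Kubert identity} $f_\tau(\zeta^m)=m^k\sum_{\xi^m=1}f_\tau(\xi\zeta)$ for the normalized function $f_\tau(\zeta):=n^{-k}s_1(\tau,\zeta)$. This requires two ingredients you do not supply: (i) the transfer/additivity axiom applied to the \emph{intermediate} cover $L_n^\infty\to L_{nm}^\infty$ (via Peter--Weyl, $\pi_*\F_{\zeta^m}=\bigoplus_{\xi^m=1}\F_{\xi\zeta}$), and (ii) a geometric scaling lemma $\tau(S^1(\lambda)/(nm),\F_\zeta)=m^k\,\tau(S^1(\lambda)/n,\F_\zeta)$, which comes not from the axioms on coefficients but from the observation that $S^1(\lambda)/m$ is again a circle bundle over $\C\P^\infty$, classified by a degree-$m$ self-map. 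Without (ii), $f_\tau$ is not even well-defined on $\Q/\Z$.

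Even granting the Kubert identity and the continuity axiom, the rigidity you invoke is not automatic: Milnor's 1983 theorem says the space of continuous solutions is \emph{two}-dimensional, split by the involution $\zeta\mapsto\bar\zeta$. The paper finishes with a parity argument you omit: reversing orientation of the $S^1$-fiber swaps $\zeta\leftrightarrow\bar\zeta$ on the local system and is induced by a degree-$(-1)$ self-map of $\C\P^\infty$, hence acts by $(-1)^k$ on $H^{2k}$. This forces $f_\tau(\zeta)=(-1)^k f_\tau(\bar\zeta)$ for \emph{every} torsion invariant in degree $2k$, selecting one of Milnor's two lines and yielding $f_\tau=a\,f_{\tau^{IK}}$.
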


We will follow an approach K. Igusa introduced in \cite{Igusa4}. \\
Since $BDiff(S^1)\simeq BSO(2)$ it suffices to look at linear $S^1$-bundles. These pull back from the universal $S^1$-bundle $S^1(\lambda)$ given by $S^1\into S^\infty\to \C\P^\infty$.\\
Let $E\to B$ be an $S^1$-bundle with local system $\F$ on $E$ inducing a finite holonomy covering. At first we look at the $n$-fold holonomy Galois covering
	$$\xymatrix
		{
			S^1\ar[d]\ar[r]^n					&	S^1\ar[d]\\
			\tilde E_n\ar[d]\ar[r]^n	&	E\ar[d]\\
			B\ar@{=}[r]								&	B.
		}
	$$
Now $\tilde E_n$ is again a linear $S^1$ bundle with fiberwise $n$-action. This will pull back equivariantly from the universal $S^1$-bundle $S^1(\lambda)$ given by $S^{\infty}\to \C\P^\infty,$ which also admits an $n$-action. Therefore $E$ will pull back from the quotient $S^1(\lambda)/(\Z/n).$ Also the local system $\F$ on $E$ will pull back from the local system $\F_{\zeta_n}$ on $S^{\infty}$ for some $n$-th root of unity $\zeta_n.$ We defined this earlier to be given by its holonomy cover $S^1(\lambda)\times \C\to S^1(\lambda)/n$ where the action on $\C$ is given by multiplication by $\zeta_n.$  So because of naturality it is enough to show 
\begin{Theorem}\label{basecase}
	$$\tau(S^1(\lambda)/n,\F_{\zeta_n})=a\tau^{IK}(S^1(\lambda)/n,\F_{\zeta_n})\in H^{2k}(\C\P^\infty; \R)$$
for all $n$ and $\zeta_n$.
\end{Theorem}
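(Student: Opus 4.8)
The plan is to reduce the statement to a computation in the cohomology of $\C\P^\infty$ by exploiting that $H^{2k}(\C\P^\infty;\R)\cong\R$ is one-dimensional, so that both $\tau(S^1(\lambda)/n,\F_{\zeta_n})$ and $\tau^{IK}(S^1(\lambda)/n,\F_{\zeta_n})$ are scalar multiples of $ch_{2k}(\lambda)=c_1^k/k!$. By the discussion in Section 3.3 we already know $\tau(S^1(\lambda)/n,\F_{\zeta_n})=s_1\,ch_{2k}(\lambda)$ (this is the \emph{definition} of $s_1$ when $2k=4l+2$, and in degree $2k=4l$ the analogous scalar is available from $\tau(S^1(\lambda))$), and by the quoted Proposition of Igusa $\tau^{IK}(S^1(\lambda)/n,\F_{\zeta_n})=-n^kL_{k+1}(\zeta_n)\,ch_{2k}(\lambda)$. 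The content of the theorem is therefore the single identity $s_1=-a\,n^kL_{k+1}(\zeta_n)$ for all $n$ and all primitive (or all) $n$-th roots of unity $\zeta_n$, with $a$ the scalar fixed once and for all by the formula $a=-s_1/(n_0^kL_{k+1}(\zeta_{n_0}))$ for one choice $n_0,\zeta_{n_0}$.

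First I would set up the diagram of holonomy coverings exactly as in the statement: given a linear $S^1$-bundle $E\to B$ with finite holonomy, pass to the $n$-fold fiberwise cover $\tilde E_n\to E$, observe $\tilde E_n$ is again a linear $S^1$-bundle carrying a fiberwise $\Z/n$-action, pull it back equivariantly from $S^1(\lambda)$ over $\C\P^\infty$, and conclude by Naturality (Axiom 1) that it suffices to evaluate $\tau$ on the universal example $S^1(\lambda)/n$ with $\F_{\zeta_n}$ — this is precisely the reduction recorded before Theorem~\ref{basecase}. So the real work is proving Theorem~\ref{basecase} itself, i.e. that the universal scalar relation holds. Here the strategy is to play the six axioms against the known values: the transfer-for-coefficients axiom (Axiom 7) relates $\tau(S^1(\lambda)/n,\F_{\zeta_n})$ to $\tau(S^1(\lambda),\pi_*\F_{\zeta_n})$, and $\pi_*\F_{\zeta_n}$ decomposes, via additivity for coefficients (Axiom 6), into a sum of the $n$ characters of $\Z/n$; the trivial character contributes $\tau(S^1(\lambda),\1)$, which vanishes by the Remark after the Naturality axiom since $\C\P^\infty$ is simply connected. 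This pins down $\tau$ on $S^1(\lambda)/n$ in terms of its values on the quotients for the various roots of unity, and an induction on $n$ (starting from the observation that the $S^1$-bundle itself, with trivial coefficients, has zero torsion because $S^1$ has odd-dimensional fiber, so $2\,\tr=0$ and the MMM class vanishes) together with the continuity/Milnor axiom mentioned in Section~2.2 forces the value to be the claimed polylogarithm expression up to the single universal constant $a$.

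The key steps, in order, are: (1) the equivariant-pullback reduction to the universal bundle $S^1(\lambda)/n$, which is immediate from Naturality; (2) apply Axioms 6 and 7 to express $\tau(S^1(\lambda)/n,\F_{\zeta_n})$ through the collection $\{\tau(S^1(\lambda)/d,\F_{\zeta_d})\}_{d\mid n}$ and $\tau(S^1(\lambda),\1)=0$; (3) use that the fiber $S^1$ is odd-dimensional, so $M$ vanishes and $\tau^{IK}$ is the only possible source, to see that the relation one is proving is \emph{linear} in the torsion and hence it suffices to check it on a spanning set of local systems; (4) invoke the continuity axiom once, as flagged in the paper, to reduce the infinitely many $\zeta_n$ to a controllable family and to match the functional form of $L_{k+1}$; (5) compare coefficients against the quoted Propositions for $\tau^{IK}$ to read off $s_1=-a\,n^kL_{k+1}(\zeta_n)$, which is the theorem. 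The main obstacle I expect is step (4) combined with the induction in step (2): one must show that the axioms do not merely constrain $\tau(S^1(\lambda)/n,\F_{\zeta_n})$ to lie in a finite-dimensional space of possible values but actually determine it up to the \emph{single} global scalar $a$ independent of $(n,\zeta_n)$ — i.e. that the transfer/induction relations among the different roots of unity are rich enough (together with continuity in $\zeta$) to propagate the normalization from one base case to all of them. This is exactly the point where the delicate continuity axiom is unavoidable and where, absent Milnor's conjecture, the argument cannot be shortened; everything else is bookkeeping with the axioms and the one-dimensionality of $H^{2k}(\C\P^\infty;\R)$.
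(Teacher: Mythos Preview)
Your outline has the right high-level architecture (reduce to scalars in $H^{2k}(\C\P^\infty;\R)$, use the coefficient axioms to relate the values for different roots of unity, then invoke continuity), but two concrete ingredients are missing and one step contains an error.

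First, the error: you claim $\tau(S^1(\lambda),\1)=0$ ``by the Remark after the Naturality axiom since $\C\P^\infty$ is simply connected.'' That remark only says the torsion of a \emph{trivial} bundle $B\times F$ vanishes; $S^1(\lambda)=ES^1\to\C\P^\infty$ is not trivial. In degree $4l$ this value is precisely the nonzero scalar $s_1$ that defines $a$. (In degree $4l+2$ the vanishing does hold, but by the Triviality Axiom, not by naturality.) So the base of your proposed ``induction on $n$'' is not what you think it is.

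Second, and more seriously, the relations you extract from Axioms 6 and 7 alone---the Peter--Weyl decomposition $\pi_*\F_{\zeta}=\bigoplus_{\xi^m=1}\F_{\xi\zeta}$---do not suffice to pin the values down to a single global scalar. The paper uses a second, independent relation (its Lemma preceding Milnor's theorem): for the self-map $f_m:\C\P^\infty\to\C\P^\infty$ classifying $S^1(\lambda)/m$, naturality gives $\tau(S^1(\lambda)/(nm),\F_{\zeta})=m^k\,\tau(S^1(\lambda)/n,\F_{\zeta})$. Combining this scaling with the Peter--Weyl relation shows that the normalized function $f_\tau(\zeta):=n^{-k}s_1(\tau,\zeta)$ satisfies the \emph{Kubert identity} $f_\tau(\zeta^m)=m^k\sum_{\xi^m=1}f_\tau(\xi\zeta)$. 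The continuity axiom is then not a vague regularity condition but the hypothesis needed to invoke Milnor's \emph{theorem}: the space of continuous Kubert functions is two-dimensional, splitting into the $\pm1$-eigenspaces of $\zeta\mapsto\bar\zeta$. A separate orientation-reversal argument (analyzing the induced map on $\pi_2\C\P^\infty$) shows $f_\tau(\bar\zeta)=(-1)^kf_\tau(\zeta)$, placing $f_\tau$ and $f_{\tau^{IK}}$ in the same one-dimensional eigenspace, whence $f_\tau=a\,f_{\tau^{IK}}$. Your sketch gestures at continuity and ``something Milnor'' but does not name the Kubert identity, the scaling lemma that produces it, or the parity argument selecting the eigenspace; without these the ``induction on $n$'' you propose has no mechanism to propagate a single normalization to all $(n,\zeta_n)$.
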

First we will prove two important Lemmas already introduced in \cite{Igusa4}. These Lemmas will isolate certain properties of $\tau(S^1(\lambda)/n,\F_{\zeta})$ thought of as a function of $\zeta.$ Then we can use a theorem of Milnor to show that the space of functions satisfying these properties is one dimensional and this will prove the theorem.

\begin{Lemma}

Suppose we have a bundle $E\to B$ and a free fiberwise $nm$-action $n,m\in\N$ on $E$. Then we have for any twisted torsion invariant and $n$-th root of unity $\zeta_n$
	$$\tau(E/n,\F_{\zeta_n^m})=\sum_{\xi^m=1}\tau(E/(nm),\F_{\xi\zeta_n}),$$
where the local systems $\F_{\zeta_n}$ on $E/n$ are given by the construction above.
\end{Lemma}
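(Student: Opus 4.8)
The plan is to derive this identity from just two of the axioms, \emph{transfer/induction for coefficients} and \emph{additivity for coefficients}, together with the elementary fact that an induced character of a finite cyclic group splits into one-dimensional characters; the geometry contributes only the identification of the right covering.

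First I would fix notation for the action. Let $g$ be a chosen generator of the group $\Z/nm$ acting freely and fiberwise on $E$, write $G=\Z/nm$, and let $H=\langle g^{m}\rangle\cong\Z/n$ be its subgroup of order $n$ (this is the $\Z/n$ implicit in $E/n$, since the construction of $\F_{\zeta_n}$ on $E/n$ presupposes that $E\to E/n$ is an $n$-fold covering). As the action is free and fiberwise, $E/H=E/n$ and $E/G=E/(nm)$ are again bundles over $B$, and because $H$ has index $m$ in $G$ the natural map $\pi\colon E/n\to E/(nm)$ is an $m$-fold fiberwise covering with deck group $G/H\cong\Z/m$. Under the correspondence between finite local systems and covering-plus-representation pairs recalled earlier in this section, $\F_{\zeta_n^{m}}$ on $E/n$ is the local system of the character $\psi\colon g^{m}\mapsto\zeta_n^{m}$ of the deck group $H$ of $E\to E/n$, while $\F_{\xi\zeta_n}$ on $E/(nm)$ is the local system of the character $\chi_{\xi\zeta_n}\colon g\mapsto\xi\zeta_n$ of $G$ (and $\xi\zeta_n$ is an $nm$-th root of unity whenever $\xi^{m}=1$, so the right-hand side is meaningful).

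Next I would compute $\pi_{*}\F_{\zeta_n^{m}}$. By the identity $\F_{\mathrm{Ind}_{H}^{G}\lambda}=\pi_{*}\F_{\lambda}$ established earlier, $\pi_{*}\F_{\zeta_n^{m}}=\F_{\mathrm{Ind}_{H}^{G}\psi}$. Since $G$ is abelian with $[G:H]=m$, Frobenius reciprocity gives $\mathrm{Ind}_{H}^{G}\psi\cong\bigoplus_{\chi|_{H}=\psi}\chi$, a sum over the exactly $m$ characters of $G$ restricting to $\psi$ on $H$. For $\chi_{\omega}\colon g\mapsto\omega$ the restriction is $\chi_{\omega}|_{H}\colon g^{m}\mapsto\omega^{m}$, so $\chi_{\omega}|_{H}=\psi$ precisely when $\omega^{m}=\zeta_n^{m}$, i.e.\ $\omega=\xi\zeta_n$ with $\xi^{m}=1$. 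Hence $\pi_{*}\F_{\zeta_n^{m}}\cong\bigoplus_{\xi^{m}=1}\F_{\xi\zeta_n}$ on $E/(nm)$.

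Finally I would chain the steps: transfer/induction for coefficients gives $\tau(E/n,\F_{\zeta_n^{m}})=\tau(E/(nm),\pi_{*}\F_{\zeta_n^{m}})$; the pushforward computation rewrites this as $\tau(E/(nm),\bigoplus_{\xi^{m}=1}\F_{\xi\zeta_n})$; and additivity for coefficients expands it to $\sum_{\xi^{m}=1}\tau(E/(nm),\F_{\xi\zeta_n})$, which is the claim. The only point that genuinely requires care is the indexing bookkeeping in the pushforward computation --- aligning the generator $g^{m}$ of $H$ with the generator $g$ of $G$ so that the restriction condition $\omega^{m}=\zeta_n^{m}$ becomes exactly the sum over $\xi^{m}=1$ (and checking this is compatible with the conventions used to define $\F_{\zeta_n}$ on $E/n$ and $\F_{\xi\zeta_n}$ on $E/(nm)$); everything else is a direct invocation of the axioms.
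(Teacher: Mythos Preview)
Your argument is correct and follows essentially the same route as the paper: identify the $m$-fold covering $\pi\colon E/n\to E/(nm)$, decompose $\pi_*\F_{\zeta_n^m}\cong\bigoplus_{\xi^m=1}\F_{\xi\zeta_n}$, and then apply the transfer/induction and additivity axioms. The only difference is cosmetic --- the paper cites the Peter--Weyl theorem for the decomposition, whereas you spell it out via Frobenius reciprocity for the cyclic group $G=\Z/nm$; both amount to the same elementary character computation.
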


\begin{proof} Denote the projection $\pi:E/n\to E/(nm).$ The Peter Weyl theorem gives us
	$$\pi_*\F_{\zeta_n^m}=\bigoplus _{\xi^m=1}\F_{\xi\zeta_n}.$$
Now we can use the transfer of coefficients and the additivity axiom to get
	$$\tau(E/n,\F_{\zeta_n^m})=\tau(E/(nm),\pi_*\F_{\zeta_n^m})=\sum_{\xi^m=1}\tau(E/(nm),\F_{\xi\zeta_n}).$$
\end{proof}

\begin{Lemma}
For every linear $S^1$- bundle $E\to B$ and any $n$-th root of unity $\zeta_n$, we have for every twisted torsion class in degree $2k$
	$$\tau(E/(nm),\F_{\zeta_n})=m^k\tau(E/n,\F_{\zeta_n}).$$
\end{Lemma}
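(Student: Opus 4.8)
The plan is to reduce the statement to a computation involving the scaling behaviour of the universal bundle under the $m$-fold power map on fibers. The key geometric fact is that for a linear $S^1$-bundle $E\to B$ the $nm$-fold covering relation $E/(nm)\to E/n$ is itself (fiberwise) the $m$-th power map $S^1\to S^1$, and that $E/(nm)$ pulls back from $E/n$ along a map $f\colon B\to B$ which, on the universal model $S^1(\lambda)/n$ over $\C\P^\infty$, is the map classified by multiplication of the first Chern class by $m$. Concretely, $S^1(\lambda)/(nm)$ is the pullback of $S^1(\lambda)/n$ along a map $\C\P^\infty\to\C\P^\infty$ inducing $c_1\mapsto m\,c_1$, and under this map the local system $\F_{\zeta_n}$ pulls back to $\F_{\zeta_n}$ (the root of unity does not change, only the "winding" does). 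Granting this, naturality (Axiom 1) gives $\tau(S^1(\lambda)/(nm),\F_{\zeta_n})=f^*\tau(S^1(\lambda)/n,\F_{\zeta_n})$, and since $\tau(S^1(\lambda)/n,\F_{\zeta_n})=s_1\,ch_{2k}(\lambda)=s_1 c_1^k/k!$ by the discussion of the scalars, while $f^*ch_{2k}(\lambda)=m^k ch_{2k}(\lambda)$, we obtain the factor $m^k$ on the universal model. A general linear $S^1$-bundle $E$ is pulled back from $S^1(\lambda)$ (equivariantly, compatibly with the $nm$-action, exactly as in the proof of the main $S^1$-theorem), so naturality propagates the identity from the universal case to arbitrary $E$.

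First I would set up the universal picture precisely: recall $S^1\into S^{2N-1}\to\C\P^{N-1}$ with the $\Z/(nm)$-action by $e^{2\pi i/(nm)}$, and observe that the quotient by the index-$m$ subgroup $\Z/n\subseteq\Z/(nm)$ is an intermediate lens-space bundle $L_n^{2N-1}\to\C\P^{N-1}$, while the quotient by the whole group is $L_{nm}^{2N-1}\to\C\P^{N-1}$. The point is to identify the classifying map of $L_{nm}^{2N-1}\to\C\P^{N-1}$, as an $S^1$-bundle over $\C\P^\infty$ carrying the local system $\F_{\zeta_n}$, with the composite of the classifying map of $L_n^{2N-1}$ with the $m$-th power map on $BS^1=\C\P^\infty$; this is a direct check on the level of principal $S^1$-bundles and the associated line $\C$ on which a generator acts by $\zeta_n$. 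Second, I would invoke the expression $\tau(S^1(\lambda)/n,\F_{\zeta_n})=s_1\,ch_{2k}(\lambda)$ established in the section on the scalars together with $H^{2k}(\C\P^\infty;\R)\cong\R\langle ch_{2k}(\lambda)\rangle$, and apply naturality to get $\tau(S^1(\lambda)/(nm),\F_{\zeta_n})=m^k\tau(S^1(\lambda)/n,\F_{\zeta_n})$. Third, I would pull back to an arbitrary linear $S^1$-bundle $E\to B$: since $E/(nm)$ and $E/n$ are simultaneously pulled back from $S^1(\lambda)/(nm)$ and $S^1(\lambda)/n$ along the same classifying map $B\to\C\P^\infty$ with compatible local systems, naturality gives the stated formula.

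The main obstacle is the bookkeeping in the middle step: verifying carefully that the $m$-fold power map on fibers corresponds, on the universal base $\C\P^\infty=BS^1$, to multiplication by $m$ on $c_1$ (equivalently to the self-map of $BS^1$ induced by the $m$-th power endomorphism of $S^1$), and that under this identification the local system $\F_{\zeta_n}$ is genuinely preserved rather than replaced by some $\F_{\zeta_n^{\pm 1}}$ or $\F_{\zeta_{nm}}$. One has to be attentive to which generator of which cyclic group acts by which root of unity, and to the fact that the holonomy cover of $\F_{\zeta_n}$ on $E/(nm)$ is obtained from that on $E/n$ by a further quotient that does not alter the $\C$-factor. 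Once this identification is pinned down, the rest is a one-line application of Axiom 1 together with the already-recorded fact that $H^{2k}(\C\P^\infty;\R)$ is one-dimensional on $ch_{2k}(\lambda)$, on which the degree-$m$ self-map of $\C\P^\infty$ acts by $m^k$.
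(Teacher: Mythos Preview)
Your proposal is correct and follows essentially the same route as the paper: reduce to the universal bundle $S^1(\lambda)$ by naturality, observe that $S^1(\lambda)/(nm)$ is the pullback of $S^1(\lambda)/n$ along the self-map $f_m:\C\P^\infty\to\C\P^\infty$ induced by the $m$-th power on $S^1$, and use that $f_m^*$ acts by $m^k$ on $H^{2k}(\C\P^\infty;\R)$. The paper phrases this via the factorization $f_{nm}=f_n\circ f_m$ of classifying maps and is terser about the local system; your extra care in checking that $\F_{\zeta_n}$ genuinely pulls back to $\F_{\zeta_n}$ under $f_m$ fills a point the paper leaves implicit.
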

\begin{proof} Again we look at the universal circle bundle $S^1(\lambda),$ and by the naturality axiom it is enough to show the Lemma only on $E=S^1(\lambda)$. $S^1(\lambda)/m$ is again a circle bundle over $\C\P^\infty$ and therefore classified by a map
	$$f_m:\C\P^\infty\to \C\P^\infty.$$
In degree $2$ we can see (by looking at circle bundles over spheres $S^2$) that this map is multiplication by $m$ on $H^2$. Then it follows that $f^*_m$ is multiplication by $m^k$ on $H^{2k}(\C\P^\infty;\R)$. The classifying maps for $S^1(\lambda)/nm$ and $S^1(\lambda)/n$ are related by
	$$f_{mn}=f_n\circ f_m.$$
The Lemma now follows from naturality.
\end{proof}

Besides these two Lemmas, we will make use of a result dating back to Milnor in \cite{Milnor}. He looked at functions $f:\Q/\Z\to \C$ satisfying the Kubert identity
	$$f(x)=m^{s-1}\sum_{k=0}^{m-1}f\left(\frac{x+k}{m}\right)$$
for fixed $s$ and all integers $m$ and all $x\in\Q/\Z$. Identifying $\Q/\Z$ with the roots of unity in $\C$ (by $x\mapsto e^{2\pi ix}$), we can write $f(x)=L(e^{2\pi ix})$ and the Kubert identity becomes
	$$L(\zeta^m)=m^{s-1}\sum_{\xi^m=1}L(\zeta\xi).$$
Milnor proved the following result:

\begin{Theorem}[Milnor, 1983]
Let $\Q/\Z$ have the quotient topology. The space of continuous functions $f:\Q/\Z\to \C$ satisfying the Kubert identity is two-dimensional and splits into two one-dimensional spaces, the first of which contains all the functions with $L(\zeta)=L(\bar\zeta)$ and the second, the ones with $L(\zeta)=-L(\bar\zeta)$.
\end{Theorem}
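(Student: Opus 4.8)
The plan is to diagonalise the Kubert relations by Fourier analysis on the circle. Writing $\zeta=e^{2\pi i x}$ and, for a positive integer $m$, letting $T_m$ denote the operator $(T_mg)(x)=\sum_{k=0}^{m-1}g\!\left(\tfrac{x+k}{m}\right)$, the Kubert identity with parameter $s$ says exactly that $f$ is a simultaneous eigenfunction: $T_mf=m^{1-s}f$ for every $m$. Since $T_mT_n=T_{mn}$, and since the conjugation involution $(\iota f)(x)=f(-x)$, i.e. $L(\zeta)\mapsto L(\bar\zeta)$, commutes with every $T_m$ (it permutes the $m$-th roots of a given root of unity), the solution space $V_s$ is $\iota$-invariant, so $V_s=V_s^+\oplus V_s^-$ into the $\pm 1$ eigenspaces of $\iota$. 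The theorem is then the assertion that $\dim V_s^+=\dim V_s^-=1$, which I would get by establishing $\dim V_s\le 2$ together with the existence of a nonzero element in each of $V_s^+$ and $V_s^-$.

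For the lower bound I would produce explicit solutions. The distribution relation $\sum_{\zeta^m=w}\mathrm{Li}_s(\zeta)=m^{1-s}\mathrm{Li}_s(w)$ for the polylogarithm $\mathrm{Li}_s(\zeta)=\sum_{n\ge 1}\zeta^n/n^s$ --- which one checks by summing the series over the $m$-th roots of $w$ and using that $\sum_{j=0}^{m-1}e^{2\pi i jn/m}$ equals $m$ if $m\mid n$ and $0$ otherwise --- is precisely the statement that $x\mapsto\mathrm{Li}_s(e^{2\pi i x})$ lies in $V_s$. For $\Re s>1$ (in particular for the values $s=k+1\ge 2$ occurring in this paper) the defining series converges uniformly, so this is a genuine continuous function on the circle, hence on $\Q/\Z$. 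Its image $x\mapsto\mathrm{Li}_s(e^{-2\pi i x})$ under $\iota$ is a second solution, and the combinations $\sum_{n\ge 1}n^{-s}\cos(2\pi nx)$ and $\sum_{n\ge 1}n^{-s}\sin(2\pi nx)$ are nonzero elements of $V_s^+$ and $V_s^-$, satisfying $L(\zeta)=L(\bar\zeta)$ and $L(\zeta)=-L(\bar\zeta)$ respectively. For the exceptional values of $s$ --- non-positive integers and $s=1$ --- one replaces these by appropriate analytic continuations or $s$-derivatives, as in \cite{Milnor}; I would carry out the case $\Re s>1$ in full and quote \cite{Milnor} for the remaining $s$, which are irrelevant to the present application.

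The substance is the upper bound $\dim V_s\le 2$. Given $f\in V_s$, form the discrete Fourier coefficients $c_N(l):=\tfrac1N\sum_{a\bmod N}f\!\left(\tfrac aN\right)e^{-2\pi i la/N}$, which involve only the values of $f$ on $\Q/\Z$. Reindexing the sum in the Kubert identity ($a\bmod N$ and $j\bmod m$ by $b=a+jN\bmod Nm$) yields the exact recursion
	$$c_{Nm}(lm)=m^{-s}\,c_N(l)\qquad\text{for all }N,m,l.$$
The decisive analytic point --- and the step I expect to be the main obstacle --- is to show that $\widehat f(l):=\lim_{N\to\infty}c_N(l)$ exists for every $l\in\Z$ and that the Fourier series with these coefficients reproduces $f$ on $\Q/\Z$; this is where the continuity hypothesis on the dense subgroup $\Q/\Z$ is actually used, via a Fej\'er-type (Ces\`aro) summability argument, and it is delicate precisely because $\Q/\Z$ is not compact, so $f$ need not extend to the circle a priori. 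Granting this, letting $N\to\infty$ in the recursion gives $\widehat f(lm)=m^{-s}\widehat f(l)$; the case $l=0$ forces $\widehat f(0)=0$ (using $s\neq 0$), and writing a nonzero $l$ as $|l|\cdot\sgn(l)$ gives $\widehat f(l)=|l|^{-s}\widehat f(\sgn l)$, so $f$ is determined by the two scalars $\widehat f(\pm 1)$; in fact $f(x)=\widehat f(1)\,\mathrm{Li}_s(e^{2\pi i x})+\widehat f(-1)\,\mathrm{Li}_s(e^{-2\pi i x})$. This proves $\dim V_s\le 2$ and identifies the basis; combined with the nonzero elements of $V_s^\pm$ found above and $\dim V_s^++\dim V_s^-=\dim V_s$, it forces $\dim V_s^+=\dim V_s^-=1$ and, since $\iota$ interchanges the two displayed summands, gives exactly the stated description of the two lines.
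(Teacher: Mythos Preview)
The paper does not prove this theorem at all: it is stated as a result of Milnor with a citation to \cite{Milnor} and then used as a black box in the proof of Theorem~\ref{basecase}. So there is no ``paper's own proof'' to compare your proposal against.

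On its own merits your outline is sound in structure. The commutation of $\iota$ with each $T_m$ and the resulting $\pm1$ decomposition is correct; the distribution relation for $\mathrm{Li}_s$ is correct and gives nonzero even and odd solutions for $\Re s>1$ (which, as you note, covers the only values $s=k+1\ge 2$ the paper needs); and the discrete Fourier recursion $c_{Nm}(lm)=m^{-s}c_N(l)$ is verified by the reindexing you describe. The one place that is not yet a proof is exactly the place you flag: you need to know that the limits $\widehat f(l)=\lim_{N\to\infty}c_N(l)$ exist and that $f$ is recovered from them. The recursion itself relates $c_{Nm}(lm)$ to $c_N(l)$, not $c_{Nm}(l)$ to $c_N(l)$, so it gives no direct Cauchy-type control on $c_N(l)$ for fixed $l$; the existence of the limit really does hinge on a separate summability argument using continuity on $\Q/\Z$, and this is where Milnor's argument does nontrivial work. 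Your plan to invoke a Fej\'er-type argument is the right idea, but as written it is a pointer rather than a proof; you would need to spell out why Ces\`aro means of the partial Fourier sums at rational points converge to $f$ using only continuity on $\Q/\Z$ (or, equivalently, why $f$ extends continuously to $\R/\Z$). Once that is in hand, your deduction $\widehat f(l)=|l|^{-s}\widehat f(\sgn l)$ and the identification with the two polylogarithm solutions go through cleanly for $\Re s>1$.
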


It is an unproven conjecture by Milnor (also in \cite{Milnor}) that one can drop the continuity assumption on $f$ and the theorem would still hold in the same form.\\

\textit{Proof of Theorem \ref{basecase}.}
To any higher twisted torsion invariant $\tau$ we get for any $n$-th root of unity a coefficient $s_1(\tau,\zeta)$ in
	 $$\tau(S^1(\lambda)/n,\F_\zeta)\stackrel{\textnormal{def }s_1}=s_1(\tau,\zeta)ch_{2k}(\lambda)\in H^{2k}(\C\P^\infty;\R)\cong \R.$$
Identifying $\Q/\Z$ with the roots of unity, we get a function $f_{\tau}:\Q/\Z\to \R$ defined by 			
	$$f_\tau(\zeta):=\frac 1 {n^k}s_1(\tau,\zeta),$$
where $\zeta^n=1.$ This is well defined, since by a previous Lemma we have
	$$\tau(S^1(\lambda)/(nm),\F_\zeta)=m^k\tau(S^1(\lambda)/n,\F_\zeta),$$
so $f_\tau(\zeta)$ is by construction independent from the choice of $n$ with $\zeta^n=1$.\\
Our goal is to show that this satisfies the Kubert identity and then to use Milnor's result to prove our theorem. But for this, $f_\tau$ needs to be continuous, a fact which we cannot prove, but must assume. Therefore we need the following last axiom:

\begin{Axiom}[Continuity]
For any twisted torsion invariant, the function $f_\tau:\Q/\Z\to \R$ constructed above is continuous.
\end{Axiom}

\begin{proof}[Continuation of the proof] Now we calculate for $\zeta\in \Q/\Z$ with $\zeta^n=1$ using the two Lemmas from above:
	\begin{eqnarray*}
		f_\tau(\zeta^m)ch_{2k}(\lambda)	&	=	&	\frac1{n^k}\tau(S^1(\lambda)/nm,\F_{\zeta^m})\\
																		&	=	&	\frac1{n^k}\sum_{\xi^m=1}\tau(S^1(\lambda)/nm,\F_{\xi\zeta})\\
																		&	=	&	m^k\sum_{\xi^m=1}f_\tau(\xi\zeta)ch_{2k}(\lambda).
	\end{eqnarray*}
So $f_\tau$ satisfies the Kubert identity (with $s=k+1$) for any $\tau$.\\
Now we need to check which one of the two one-dimensional subspaces of the space of functions satisfying the Kubert identity $f_\tau$ is in. For this, we note that the change of representation from $\zeta$ to $\bar\zeta$ represents a change of orientation in the fiber. So it corresponds to a map $g:\C\P^\infty\to \C\P^\infty,$ giving $g_*:\pi_1S^1\to\pi_1 S^1$ as multiplication by $-1$. Using   that $\pi_1 S^1(\lambda)/n\cong \Z/n$, we get the following commutative diagram relating  the exact sequence of the homotopy groups of the fibration $S^1\into S^1(\lambda)/n\to \C\P^\infty$ to itself under $g_*$:
	$$\xymatrix
			{
				\pi_2\C\P^\infty\ar[d]^{g_*}\ar[r]^{\cdot n}	&	\Z\ar[d]^{-1}\ar@{->>}[r]	&	\Z/n\ar[r]\ar[d]^{g_*}	&	0\\
				\pi_2\C\P^\infty\ar[r]^{\cdot n}							&	\Z\ar@{->>}[r]						&	\Z/n\ar[r]							&	0.
			}
	$$
From this one can see that $g_*:\pi_2\C\P^\infty\to \pi_2\C\P^\infty$ is multiplication by $-1$. Since $\C\P^\infty$ is simply connected, $g_*$ is also multiplication by $-1$ in homology of degree $2$. Since $\C\P^\infty$ is an Eilenberg-MacLane space, $g^*$ must be multiplication by $-1$ on degree $2$ cohomology and therefore multiplication by $(-1)^k$ on degree $2k$ cohomology.\\
This yields 
	$$f_\tau(\zeta)=(-1)^kf_\tau(\bar\zeta)$$
for any $\tau$ with degree $2k.$ So $f_\tau$ is in one specific one-dimensional subspace of the space of functions satisfying the Kubert identity for any torsion invariant $\tau$ of degree $2k,$ and therefore we have for an arbitrary torsion invariant $\tau$ and the Igusa-Klein torsion $\tau^{IK}$
	$$f_\tau=af_{\tau^{IK}}$$
for a certain $a\in \R$. This translates to
	$$\tau(S^1(\lambda)/n,\F_\zeta)=a\tau^{IK}(S^1(\lambda)/n,\F_\zeta)$$
for any root of unity $\zeta$ and proves the theorem.
\end{proof}

\begin{Remark} Given that Milnor's conjecture on continuity is correct, we do not need the functions $f_\tau$ to be continuous anymore and therefore can drop the whole continuity axiom.
\end{Remark}

\section{The Difference Torsion}

Given a twisted torsion invariant $\tau$, we can now form the twisted difference torsion
	$$\tau^\delta:=\tau-a\tau^{IK}-bM,$$
where the scalars $a$ and $b$ are the ones from the statement (and $b$ is $0$ if the torsion has degree $4l+2$).
Our goal now is to show that $\tau^\delta(E,\F)=0$ for every bundle $E\to B$ with every local coefficient system $\F$ on $E$ and rationally simply connected $B$. Clearly, $\tau^\delta$ is a twisted torsion invariant.

\subsection{Lens Space Bundles}

The goal now is to show that the difference torsion is zero on every linear odd dimensional lens space bundle $L^{2N+1}_n\into E^{2N+1}_n\to B$ with local coefficient system $\F$ on $E_n^N$. We already know from the base case that the difference torsion is zero on every $S^1$-bundle. Furthermore, if we take an $S^l$-bundle with $l>1$ or disk bundle, we know that the fundamental group of the fiber is trivial and it therefore admits no non-constant local system. So the twisted difference torsion on these bundles is always given by the non-twisted difference torsion. But the non-twisted difference torsion is zero everywhere as K. Igusa showed in \cite{Igusa1}. From this we get the following Lemma:

\begin{Lemma}
For the difference torsion $\tau^\delta$ associated with any higher twisted torsion invariant, we have
	$$\tau^\delta(E,\F)=0$$
for any disk or sphere bundle $E\to B$ with local system $\F$ on $E$.
\end{Lemma}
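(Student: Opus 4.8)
The plan is to reduce the claim to the already-known vanishing of the \emph{non-twisted} difference torsion, which holds by Igusa's theorem on non-twisted torsion invariants combined with the fact that $M$ and $\tau^{IK}$ are themselves non-twisted torsion invariants. The key observation is that a disk $D^m$ (with $m \geq 1$) and a sphere $S^l$ (with $l \geq 2$) are simply connected, hence any $D^m$-bundle or $S^l$-bundle $E \to B$ admits no non-constant local system: the local system $\F$ on $E$ restricts on each simply connected fiber to the constant local system, and since we have already reduced to simply connected base $B$ (Section 5.1), $\F$ is determined by its restriction to the fiber and must therefore itself be constant, $\F \cong \1^{\oplus m}$ for $m = \dim \F$.

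First I would invoke the additivity-for-coefficients axiom to write $\tau^\delta(E,\F) = \sum_i \tau^\delta(E, \1) = m \cdot \tau^\delta(E,\1)$, reducing everything to the constant-coefficient case. Then $\tau^\delta(E,\1) = \tau(E,\1) - a\,\tau^{IK}(E,\1) - b\,M(E,\1)$, and by the Remark identifying twisted torsion with non-twisted torsion via the constant local system, each term here is the value of a genuine non-twisted torsion invariant on the bundle $E \to B$; moreover the scalars $a,b$ appearing in the twisted difference torsion agree with the scalars one would use in the non-twisted decomposition (this is the content of the Remark following the Main Theorem, in degree $4l$; in degree $4l+2$ the triviality axiom forces $\tau(E,\1) = 0$ and $M = 0$ and $b=0$, so $\tau^\delta(E,\1) = -a\,\tau^{IK}(E,\1)$, which also vanishes since $\tau^{IK}_{4l+2}(E,\1) \in H^{4l+2}(B;\R)$ is zero by the absence of non-twisted torsion in that degree). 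Hence $\tau^\delta(E,\1)$ is the non-twisted difference torsion, which is identically zero by Igusa's theorem (the one quoted just before the Main Theorem). Therefore $\tau^\delta(E,\F) = 0$.

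I do not expect any serious obstacle here; the statement is essentially a bookkeeping consequence of simple-connectivity of the fibers together with the already-established non-twisted case. The one point requiring a little care is the claim that a local system on $E$ with simply connected fiber and (after the reduction of Section 5.1) simply connected base is necessarily constant: this follows because $\pi_1 E$ sits in an exact sequence $\pi_1 F \to \pi_1 E \to \pi_1 B$ with both outer groups trivial for $F$ a sphere or disk and $B$ simply connected, so $\pi_1 E = 0$ and every local system on $E$ is constant. The only mild subtlety is making sure the reduction to simply connected base (which uses injectivity of $q^*\colon H^{2k}(B;\R) \to H^{2k}(\tilde B;\R)$ for the universal cover $q$, valid since $\pi_1 B$ is finite) is legitimately available at this stage of the argument, which it is, since Section 5.1 establishes it before the difference torsion is introduced.
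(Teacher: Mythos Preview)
Your argument for disk bundles and for sphere bundles $S^l$ with $l\geq 2$ is correct and matches the paper's reasoning exactly: the simply connected fiber forces the local system to be constant (after the reduction to simply connected base in Section~5.1), and then one is in the non-twisted situation, where Igusa's theorem gives the vanishing.

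However, there is a genuine gap: you have silently excluded the case $l=1$, and the lemma as stated covers \emph{all} sphere bundles, including $S^1$-bundles. The circle is not simply connected, so an $S^1$-bundle $E\to B$ can carry a genuinely non-constant local system $\F$ (indeed this is the whole point of the bundles $S^1(\lambda)/n$ with $\F_{\zeta_n}$ used to define the scalar $a$ in degree $4l+2$). Your reduction to the non-twisted case therefore does not apply here. The paper handles $S^1$-bundles separately, by invoking the ``base case'' established in Section~5.2 (Theorem~\ref{basecase} and its consequence): for every $S^1$-bundle with local system one has $\tau(E,\F)=a\,\tau^{IK}(E,\F)$, and since $M$ vanishes on closed odd-dimensional fibers this gives $\tau^\delta(E,\F)=0$ directly. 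You should add this case to your proof; once you do, the argument is complete and agrees with the paper's.
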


At first we will prove:

\begin{Lemma} The difference torsion is $0$ on any linear odd dimensional lens space bundle $L^{2N+1}_n\into E^{2N+1}_n\to B$. By linear we mean that it is covered by a linear sphere bundle $S^{2N+1}\into \tilde E^{2N+1}\to B.$
\end{Lemma}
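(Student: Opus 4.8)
The plan is to build $E^{2N+1}_n$ fibrewise out of a disk bundle over the $(2N-1)$-dimensional lens bundle, a disk bundle over a circle bundle, and a sphere bundle between them, and then to run an induction on $N$ whose base case is the $S^1$-bundle result of Theorem \ref{basecase}.

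First I would reduce to $B$ simply connected exactly as in Section 5.1: pull back along the universal cover $\tilde B\to B$ (a finite covering, since $\pi_1B$ is finite) and use injectivity of the induced map on $H^*(-;\R)$ together with naturality of $\tau^\delta$. A linear $L^{2N+1}_n$-bundle is then $E^{2N+1}_n=S(W)/(\Z/n)\to B$ for a rank $N+1$ complex bundle $W\to B$ with $\Z/n\subseteq S^1$ acting by scalars; after pulling back along the projectivization $\mathbb{P}(W)\to B$ — which is compact, still simply connected, and along which $H^*(-;\R)$ injects by Leray--Hirsch — I may assume $W=L\oplus W'$ splits off a complex line bundle $L$, with $W'$ of rank $N$. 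I would then split the fibre sphere $S(L\oplus W')$ fibrewise according to whether the norm of the $L$-component is $\le$ or $\ge\tfrac1{\sqrt2}$, and pass to the free diagonal $\Z/n$-quotient, obtaining $E^{2N+1}_n=E_1\cup E_2$ with $\partial^vE_1=\partial^vE_2=E_1\cap E_2=:E_0$, where $E_1\to E^{2N-1}_n:=S(W')/(\Z/n)$ is an oriented $D^2$-bundle over the linear $(2N-1)$-dimensional lens bundle, $E_2\to C:=S(L)/(\Z/n)$ is an oriented $D^{2N}$-bundle over a circle bundle over $B$, and $E_0\to E^{2N-1}_n$ is the boundary $S^1$-bundle of $E_1$. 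Since $E_1\to E^{2N-1}_n$ and $E_2\to C$ are disk bundles, the restrictions of $\F$ to $E_1$, $E_2$ and $E_0$ are pulled back along these projections from the restriction of $\F$ to the relevant zero sections (for $E_0$: its fibre circle bounds the disk fibre of $E_1$ and so is nullhomotopic in $E^{2N+1}_n$).

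Next I would apply additivity in the boundary case (Lemma \ref{additivity}) to get
$$\tau^\delta(E^{2N+1}_n,\F)=\tau^\delta(E_1,\F_{|E_1})+\tau^\delta(E_2,\F_{|E_2})-\tau^\delta(E_0,\F_{|E_0}).$$
The transfer formula in the boundary case applied to the $D^2$-bundle $E_1\to E^{2N-1}_n$, using $\chi(D^2)=1$ and the preceding Lemma (that $\tau^\delta$ vanishes on disk bundles), gives $\tau^\delta(E_1,\F_{|E_1})=\tau^\delta(E^{2N-1}_n,\F_{|E^{2N-1}_n})$. Applied to $E_2\to C$ it gives $\tau^\delta(E_2,\F_{|E_2})=\chi(D^{2N})\tau^\delta(C,\F_{|C})+\tr_B^C(\tau^\delta_C(E_2,\F_{|E_2}))$, and both terms vanish: the first because $C\to B$ is an $S^1$-bundle over a simply connected base, so $\tau^\delta(C,\F_{|C})=0$ by Theorem \ref{basecase} (together with $M=0$ on odd fibres), the second again by the preceding Lemma. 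Finally the geometric transfer axiom applied to the $S^1$-bundle $E_0\to E^{2N-1}_n$ gives $\tau^\delta(E_0,\F_{|E_0})=\chi(S^1)\tau^\delta(E^{2N-1}_n,\F_{|E^{2N-1}_n})+\tr_B^{E^{2N-1}_n}(\tau^\delta_{E^{2N-1}_n}(E_0,\F_{|E_0}))=0$, because $\chi(S^1)=0$ and $\tr_B^{E^{2N-1}_n}$ vanishes over $\R$, the fibre $L^{2N-1}_n$ being odd-dimensional. This yields $\tau^\delta(E^{2N+1}_n,\F)=\tau^\delta(E^{2N-1}_n,\F_{|E^{2N-1}_n})$, and induction on $N$ — the base case $N=0$ being $E^1_n\to B$, an $S^1$-bundle over a simply connected base, where $\tau^\delta=0$ by Theorem \ref{basecase} — would complete the argument.

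I expect the main obstacle to be the geometric input of the second paragraph: realizing the join decomposition $S^{2N+1}=S^{2N-1}\ast S^1$ as an honest fibrewise decomposition into disk and sphere subbundles of vector bundles (this is what forces the passage to $\mathbb{P}(W)$), and then checking carefully that orientations, the identification $\partial^vE_1=\partial^vE_2=E_0$, and the behaviour of the local system on each piece are all as claimed, so that the hypotheses of Lemma \ref{additivity}, of the transfer formula in the boundary case, and of the geometric transfer axiom are genuinely met. Everything after that is a direct manipulation of the axioms. One point to watch is that the total space of the circle bundle $C$ need not have finite fundamental group, so Theorem \ref{basecase} must not be invoked with $C$ as a base — but it never needs to be, since $C$ enters only as an $S^1$-bundle over $B$ itself and through a transfer along $C\to B$ which vanishes for dimension reasons.
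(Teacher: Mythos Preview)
Your proposal is correct and follows essentially the same strategy as the paper: both use the fibrewise join decomposition $L^{2N+1}_n=(D^2\times S^{2N-1})/n\cup_{(S^1\times S^{2N-1})/n} (S^1\times D^{2N})/n$, apply additivity in the boundary case, kill each piece via the transfer formula over an intermediate lens or circle bundle, and induct on $N$ from the $S^1$-bundle base case. The only cosmetic differences are that you split off one line bundle at a time via pullback to $\mathbb{P}(W)$ where the paper invokes the full splitting principle up front, and that you dispose of the intersection term $E_0$ via the vanishing of $\tr_B^{E^{2N-1}_n}$ on closed odd-dimensional fibres rather than via the sphere-bundle lemma---both routes work.
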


\begin{proof} The covering sphere bundle $\tilde E^{2N+1}$ is a subbundle of an $N+1$-dimensional complex vector bundle. By the splitting principle, it suffices to look at the direct sum of $N+1$ complex line bundles. The sphere bundle will become the fiberwise joint of the circle bundles associated with the line bundles:
	$$S^1*\ldots *S^1\into \tilde E^1_1*\ldots*\tilde E^1_{N+1}\to B.$$
Now we have 
	\begin{eqnarray*}
		L^{2N+1}_n	&	\cong	&	(S^{2N-1}*S^1)/n\\
								&	\cong	&	(S^{2N-1}/n)*(S^1/n)\\
								&	=			&	(S^{2N-1}\times D^2)/n\cup_{(S^{2N-1}\times S^1)/n}(D^{2N}\times S^1)/n.
	\end{eqnarray*}
Fiberwise, this gives us 
	$$E^{2N+1}_n=H_n^{2N-1}\cup H_n^1,$$
where $H_n^{2N-1}\to B$ is a $(S^{2N-1}\times D^2)/n$ bundle and $H_n^1\to B$ is a $(D^{2N}\times S^1)/n$ bundle, both meeting along their common vertical boundary given by an $(S^{2N-1}\times S^1)/n$-bundle $G_n$. The $n$-action is hereby given by the  simultaneous action on each component of the crossproducts. While the $n$-action on any disk is not free, the simultaneous action will guarantee that it is free on the crossproduct. We can restrict every local coefficient system $\F$ on $E_n^{2N+1}$ to $H_n^{2N-1},$ $H_n^1$ and $G_n$ and use the additivity axiom.\\  
Now we will continue the proof by induction. We know that the difference torsion is $0$ on every $L^1_n\cong S^1$-bundle. Let us then assume that the difference torsion is $0$ on any linear $L^{2N-1}_n$-bundle with any representation of the fundamental group. Given a linear $L^{2N+1}_n$-bundle $E^{2N+1}_n\to B$ with local coefficient system $\F$, the construction above yields
	$$\tau^\delta(E_n^{2N+1},\F)=\tau^\delta(H_n^{2N-1},\F_{|H_n^{2N-1}})+\tau^\delta(H_n^1,\F_{|H_n^1})-\tau^\delta(G_n,\F_{|G_n}).$$
We have non-trivial fibrations $D^2\into (S^{2N-1}\times D^2)/n\to L_n^{2N-1}$, $D^{2N}\into (D^{2N}\times S^1)/n\to L_n^1$ and $S^1\to (S^{2N-1}\times S^1)/n\to L_n^{2N-1}.$ The first of these splits the bundle $H_n^{2N-1}$ in the following manner:
	$$
		\xymatrix
			{
				D^2\ar@{^{(}->}[r]	&	(S^{2N-1}\times D^2)/n\ar[rr]\ar@{^{(}->}[d]	&	&		L_n^{2N-1}\ar@{^{(}->}[ld]\\
				D^2\ar@{^{(}->}[r]	&	H_n^{2N-1}\ar[r]\ar[d]							&	J_n\ar[ld]\\
									&	B,
			}
		$$
where $J_n\to B$ is an $L_n^{2N-1}$-bundle and $H_n^{2N-1}\to J_n$ is a $D^2$-bundle. Since $D^2$ is contractible, we get a local system $\F_J$ on $J_n$ the pull-back of which to $H^{2N-1}_n$ is isomorphic to $\F_{|H^{2N-1}_n}.$ Now we can use the geometric transfer and the fact that we already determined the difference torsion to be $0$ on $L_n^{2N-1}$- and $D^2$-bundles to show 
	$$\tau^\delta(H_n^{2N-1},\F_{|H_n^{2N-1}})=\chi(D^2)\tau(J_n,\F_J)+\tr_B^{J_n}(\tau_{J_n}(H^{2N-1}_n,\F_|H_n^{2N-1}))=0.$$
A similar argument holds for $H_n^1$ and $G_n,$ and this completes the proof.
\end{proof}


\subsection{Difference Torsion as a Fiber Homotopy Invariant}

In this section, we will prove that the difference torsion $\tau^\delta$ is a fiber homotopy invariant.  By this we mean that for any two bundles $F_1\into E_1\to B$ and $F_2\into E_2\to B$ and fiber homotopy equivalence $f:E_1\to E_2$ with local coefficient systems $\F_2$ on $E_2$ and $f^*\F_2\cong\F_1$ on $E_1$, we have
	$$\tau^\delta(E_1,\F_1)=\tau^\delta(E_2,\F_2)\in H^{2k}(B;\R).$$
This section will greatly rely on the construction of the equivariant Hatcher examples from \cite{Hatcherconstruction}. We will especially use some techniques involving \hcob bundles, for a basic depiction of which the leader is also referred to \cite{Hatcherconstruction}.\\

First we show the following lemmas:

\begin{Lemma}\label{disklemma} For any linear disk bundle $D\stackrel{q}{\to} E$ and any  bundle pair $(E,\partial_0)\to B$ with local coefficient system $\F$ we have
	$$\tau^\delta_B(D,\partial_0,q^*\F)=\tau^\delta_B(E,\partial_0,\F),$$
where we pull the system up to $D$ and $\partial_0D=q^{-1}\partial_0 E$ as usual.
\end{Lemma}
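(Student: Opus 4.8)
The plan is to reduce the statement about the difference torsion $\tau^\delta = \tau - a\tau^{IK} - bM$ to the relative transfer formula proved in the previous subsection, exploiting the fact that a linear disk bundle $D^n \to D(\xi) \to E$ has fiber $D^n$ with $\chi(D^n,\partial_0) = 1$ (when $\partial_0 X = \emptyset$) or, more generally, $\chi(D^n) = 1$, and that the relative transfer of the trivial class behaves predictably. Concretely, I would apply the relative transfer proposition to the bundle pair $(X,\partial_0) \to (D,\partial_0) \xrightarrow{q} E$ with $X = D^n$ and $\partial_0 X = \emptyset$ (so $\partial_0 D = q^{-1}\partial_0 E$), which gives
$$\tau_B(D,\partial_0 D, q^*\F) = \chi(D^n)\,\tau(E,\partial_0,\F) + \tr_B^{(E,\partial_0)}(\tau_E(D,\partial_0,q^*\F)).$$
Since $\chi(D^n) = 1$, the first term is exactly $\tau(E,\partial_0,\F)$, so the whole content of the lemma is that the correction term $\tr_B^{(E,\partial_0)}(\tau_E^\delta(D,\partial_0,q^*\F))$ vanishes. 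Here $\tau_E$ denotes the torsion of $D$ viewed as a bundle over $E$ (not over $B$), which is a bundle with $D^n$ fibers over $E$.

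First I would observe that $D \to E$ is a disk bundle with \emph{contractible} fiber, so by the Remark following the Naturality axiom (triviality on trivial bundles, extended to the simply-connected-fiber case) together with the stability theorem (Corollary, stability), the relative torsion $\tau_E(D,\partial_0,q^*\F)$ over $E$ is controlled: the bundle $D \to E$ is fiber-homotopy-trivial, and on the already-established classes I would invoke that $\tau^{IK}$ of a linear disk bundle and $M$ of a linear disk bundle over $E$ reduce to their non-twisted counterparts (since $D^n$ admits no nonconstant local system, $q^*\F$ restricted to each fiber is constant, hence $q^*\F$ is pulled back from $E$). Then I would use that the non-twisted difference torsion vanishes everywhere (Igusa's theorem, already cited), so $\tau_E^\delta(D,\partial_0,q^*\F) = \tau_E^\delta(E,\partial_0,\F) = 0$ once we know $D \to E$ reduces relatively to $E \to E$, i.e., to the identity bundle, whose difference torsion is zero. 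Applying $\tr_B^{(E,\partial_0)}$ to zero gives zero, and subtracting the identical relative-transfer identities for $\tau$, $a\tau^{IK}$, and $bM$ yields the claim for $\tau^\delta$.

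The main obstacle I expect is making the intermediate step precise: showing that $\tau_E(D,\partial_0,q^*\F)$ really is the relative torsion of the \emph{identity} bundle (hence zero for $\tau^\delta$), rather than just some disk-bundle torsion over $E$. The clean way around this is to run the argument directly on $\tau^\delta$ without separating the pieces: $\tau^\delta$ satisfies the relative transfer proposition (it is a twisted torsion invariant, being an $\R$-linear combination of such), so $\tau^\delta_B(D,\partial_0,q^*\F) = \tau^\delta(E,\partial_0,\F) + \tr_B^{(E,\partial_0)}(\tau^\delta_E(D,\partial_0,q^*\F))$, and the previous lemma (difference torsion vanishes on disk and sphere bundles) applied over the base $E$ forces $\tau^\delta_E(D,\partial_0,q^*\F) = 0$ directly — one must only check that that lemma applies in the relative/boundary setting, which follows from the relative additivity propositions by writing $\tau^\delta_E(D,\partial_0,q^*\F) = \tau^\delta_E(D,q^*\F) - \tau^\delta_E(\partial_0 D, q^*\F)$ and noting each term is a difference torsion of a disk or sphere bundle over $E$. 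I would also need to handle the case $\partial_0 X \neq \emptyset$ (i.e.\ $\partial_0 D$ meeting the fiberwise boundary of the disk fiber), which follows by the same bookkeeping used in the proof of the relative transfer proposition, taking the appropriate difference of two $\partial_0 X = \emptyset$ instances.
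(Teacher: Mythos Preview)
Your ``clean way around this'' in the second paragraph is exactly the paper's proof: apply the relative transfer formula directly to $\tau^\delta$ (with $\partial_0 X=\emptyset$, so $\chi(D^n)=1$) and kill the transfer term using the already-established fact that $\tau^\delta_E(D,q^*\F)=0$ for disk bundles. Your additional worries are unnecessary---since the lemma's hypothesis $\partial_0 D = q^{-1}\partial_0 E$ is precisely the $\partial_0 X=\emptyset$ case, the term inside the transfer is the absolute $\tau^\delta_E(D,q^*\F)$, not a relative one, so no further decomposition or $\partial_0 X\neq\emptyset$ case is needed.
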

\begin{proof} By geometric transfer we have
	$$\tau^\delta_B(D,\partial_0,q^*\F)=\tau^\delta(E,\partial_0, \F)+\tr_B^E(\tau^\delta_E(D,q^*\F))$$
and $\tau^\delta_E(D,q^*\F)=0$ because $D$ is a disk bundle over $E$.
\end{proof}

We will now need to prove three subsequent lemmas before we can prove the fiber homotopy invariance.

\begin{Remark} We saw in section 5.1 that it is enough to look at local systems that induce holonomy covers with cyclic transformation group. So we will always assume that.
\end{Remark}

\begin{Lemma} Let $B$ be a rationally simply connected space. Then for arbitrarily large integers $N$ the difference torsion $\tau^\delta$ is zero on any \hcob bundle of $L_n^{2N-1}$ for a given $n.$
\end{Lemma}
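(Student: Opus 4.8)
The plan is to reduce the statement to the calculations already carried out on linear lens space bundles in the previous subsection, using the structure of the space of \hcob bundles of $L_n^{2N-1}$ developed in \cite{Hatcherconstruction}. An \hcob bundle over $B$ with fiber an \hcob on $L_n^{2N-1}$ is classified by a map from $B$ into the relevant \hcob space, and since $B$ is rationally simply connected and the difference torsion takes values in $H^{2k}(B;\R)$, only the rational homotopy of that classifying space can contribute. So the first step is to recall from \cite{Hatcherconstruction} the explicit basis for (the rationally relevant part of) the space of \hcob bundles of $L_n^{2N-1}$: this is precisely where the equivariant Hatcher construction enters, giving a finite list of generating bundles whose total spaces are built out of pieces (disk bundles, sphere bundles, lower-dimensional lens space bundles) on which we already understand $\tau^\delta$.

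Next I would evaluate $\tau^\delta$ on each of these basis bundles. By the additivity and relative additivity propositions of section 4, together with the transfer-in-the-boundary-case and relative transfer propositions, one can decompose each generator into the standard pieces. By the previous two lemmas — that $\tau^\delta$ vanishes on disk and sphere bundles, and on linear odd-dimensional lens space bundles — and by Lemma~\ref{disklemma}, the contribution of each piece is controlled: a disk bundle over a base on which $\tau^\delta$ vanishes contributes nothing, sphere bundles contribute nothing, and the lens-space strata contribute nothing by the lens space lemma. The $N$ large'' hypothesis is used here to guarantee that the Hatcher handles can be attached in the stable range, so that the decomposition into these standard pieces is valid and the basis from \cite{Hatcherconstruction} is actually a basis; it also ensures we stay in a range where the \hcob space has the expected rational homotopy type. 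Assembling these vanishing contributions via the additivity formulas gives $\tau^\delta = 0$ on each basis element, hence on every \hcob bundle of $L_n^{2N-1}$ by linearity of $\tau^\delta$ (it is a twisted torsion invariant, so it is additive and natural).

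The main obstacle I anticipate is bookkeeping: matching the explicit geometric description of the Hatcher basis bundles from \cite{Hatcherconstruction} with a decomposition into pieces to which the vanishing lemmas of this section literally apply, keeping track of the induced local systems under all the restrictions, gluings, and fiberwise doublings, and making sure the transfer terms $\tr_B^E(\tau^\delta_E(\cdot))$ really do vanish because the inner bundle is a disk or sphere bundle over the appropriate intermediate total space. A secondary subtlety is confirming that rational simple-connectivity of $B$ together with the torsion degree $2k$ is enough to discard all the non-simply-connected rational homotopy of the \hcob classifying space; this should follow from naturality exactly as in the reduction to simply connected base in section 5.1, but it needs to be stated carefully. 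Once the basis is in hand and the local systems are tracked, the actual computation on each generator is routine given the earlier propositions.
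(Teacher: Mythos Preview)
Your proposal is correct and follows essentially the same route as the paper: view $\tau^\delta$ as a homomorphism out of the group $[B,B\P(L_n^{2N-1})]$, use the stable range $N\gg\dim B$ to identify this with $[B,\H(B\Z/n)]$, invoke the main result of \cite{Hatcherconstruction} to obtain rational generators given by equivariant Hatcher constructions, and then check vanishing on those generators. The only difference is emphasis: the paper defers the vanishing check on the Hatcher generators entirely to the axiomatic calculations carried out in \cite{Hatcherconstruction}, rather than attempting an in-paper decomposition into disk, sphere, and linear lens space pieces as you outline---your anticipated ``bookkeeping obstacle'' is exactly the content of that external reference.
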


\begin{proof} Since $B$ is simply connected, all local systems on an \hcob bundle of $L_n^{2N-1}\times D^M$ inducing an $n$-fold cyclic holonomy are isomorphic to the local systems of the form $\F_\zeta,$ where $\zeta$ is an $n$-th root of unity. We will now fix such a $\zeta.$\\

We will follow K. Igusa closely in his discussion of the untwisted version of this proof \cite{Igusa1}. By the stability of higher torsion we can view the difference torsion as a map
	$$\tau^\delta(\underline{\quad},\F_\zeta):[B,B\P(L^{2N-1}_n)]=[B,B(\lim_\to\curlyC(L_n^{2N-1}\times D^M))]\to H^*(B;\R)$$
sending an \hcob bundle $h\to B$ to $\tau^\delta(h,\F_\zeta).$ We can give the set $[B,B\P(L_n^{2N-1})]$ a group structure by the fiberwise gluing together of the \hcobs as explained in \cite{Igusa1}. From the additivity properties of higher twisted torsion it follows that $\tau^\delta(\underline{\quad},\F_\zeta)$ is a group homomorphism. So it is enough to give rational generators of $[B,B\P(L^{2N-1}_n)]$ and show that the difference torsion is zero on these generators.\\

For $N$ large enough ($N>>\dim B$) we have
	$$[B,B\P(L^{2N-1}_n)]=[B,\H(L^{2N-1}_n)]\cong [B,\H(B\Z/n)].$$
In \cite{Hatcherconstruction} we define the twisted Hatcher maps $\Delta^i:G_n/U\to \H(B\Z/n)$ and the main theorem thereof tells us (because $\dim B<<N$) that thereby the space $\Q\otimes [B,\H(B\Z/n)]$ is spanned by various Hatcher constructions (also defined in \cite{Hatcherconstruction}) of one non trivial vector bundle $\xi$ over $B$ with fiber homotopically trivial sphere bundle. The calculations in \cite{Hatcherconstruction} only rely on the axioms and ensure that the difference torsion of these Hatcher constructions is zero. 
\end{proof}

\begin{Lemma} Let $N$ be an arbitrarily large integer and $E\to B$ a bundle with local system $\F$ on $E$ inducing an $n$-fold cyclic holonomy covering. Then we have $\tau^\delta(E,\F)=0$ if there is a fiber homotopy equivalence
	$$\xymatrix{
		E\ar[r]^(.3){\sim}\ar[d]	&	L_n^{2N-1}\times B\ar[d]\\
		B\ar[r]^=						&	B.
		}
	$$ 
\end{Lemma}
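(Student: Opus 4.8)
The plan is to leverage the preceding lemma, which established that $\tau^\delta$ vanishes on every \hcob bundle of $L_n^{2N-1}$, together with the relative additivity and stability properties of the difference torsion developed in section 4. The starting observation is that a fiber homotopy equivalence $f:E\to L_n^{2N-1}\times B$ over $B$ does not directly tell us that $E$ \emph{is} such a product bundle, but by the theory of smoothing and by the relationship between fiber homotopy equivalences and \hcobs (in the stable range $N>>\dim B$), the ``difference'' between $E$ and the trivial bundle $L_n^{2N-1}\times B$ is measured by an \hcob bundle. Concretely, I would stabilize by crossing with a high-dimensional disk $D^M$ so that we are in the stable range, use the fact that $E\times D^M$ and $L_n^{2N-1}\times D^M\times B$ become diffeomorphic after attaching a suitable \hcob bundle $W\to B$ (this is essentially the $s$-cobordism/smoothing theory input, legitimate here since we work rationally and $\pi_1$ of the fiber is $\Z/n$), and then read off the torsion of $E$ from that of the trivial piece plus the torsion of $W$.

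The key steps, in order, would be the following. \textbf{Step 1:} Replace $E$ by $E\times D^M$ for $M$ large; by the stability theorem (Corollary, section 4.2) this changes neither $\tau^\delta(E,\F)$ nor the statement, and it places us in the concordance-stable range where $[B,B\P(L_n^{2N-1})]\cong[B,\H(L_n^{2N-1})]$ as used in the previous lemma. \textbf{Step 2:} Use the fiber homotopy equivalence $E\xrightarrow{\sim} L_n^{2N-1}\times B$ to produce, after stabilization, an \hcob bundle $h\to B$ on the trivial bundle $L_n^{2N-1}\times B$ whose other end is (fiberwise diffeomorphic to) $E\times D^M$; this is exactly the geometric content of the identification of the space of smooth structures in the fiber homotopy type with $B\P(L_n^{2N-1})$. \textbf{Step 3:} Apply relative additivity (relative additivity Proposition, section 4.2) to the decomposition of the total space of $h$ into the trivial bundle $L_n^{2N-1}\times D^M\times B$ glued to a collar, obtaining
	$$\tau^\delta(E,\F)=\tau^\delta(L_n^{2N-1}\times D^M\times B,\F_\zeta)+\tau^\delta(h,\F_\zeta).$$
\textbf{Step 4:} The first term vanishes by naturality/triviality on trivial bundles over a (rationally) simply connected base — more precisely, it pulls back from a point after passing to the universal cover, and $q^*$ is injective on rational cohomology as shown in section 5.1. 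The second term vanishes by the immediately preceding lemma, since $h$ is an \hcob bundle of $L_n^{2N-1}$. Combining gives $\tau^\delta(E,\F)=0$.

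The main obstacle I anticipate is \textbf{Step 2}: carefully justifying that a mere fiber homotopy equivalence over $B$ produces, after stabilization, an honest \hcob bundle realizing the difference between $E$ and the trivial lens space bundle, rather than some more complicated smoothing-theoretic datum. One must invoke the identification (valid in the stable range and after rationalization) of the relevant space of smooth manifold bundle structures on a fixed fiber homotopy type with $B\P$ of the fiber, and check that the local system $\F$ on $E$ corresponds, under the homotopy equivalence, precisely to the standard $\F_\zeta$ on the trivial bundle (this is where rational simple-connectivity of $B$ is used: all $n$-fold cyclic holonomy local systems on $L_n^{2N-1}\times B$ are of the form $\F_\zeta$, as noted in the previous lemma's proof). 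Once this bookkeeping is in place, Steps 3 and 4 are routine applications of results already proved in sections 4 and 6.1.
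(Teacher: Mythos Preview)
Your overall strategy---stabilize, produce an \hcob bundle measuring the difference between $E$ and the trivial lens space bundle, then invoke the previous lemma---is the right shape, but your Step~2 is precisely where the content lies, and you have black-boxed it. The paper does \emph{not} appeal to an abstract identification of the smooth structure set with $B\P(L_n^{2N-1})$ to conjure an \hcob with prescribed ends; instead it builds the \hcob by hand. Concretely: thicken the fiber homotopy equivalence to a smooth fiberwise embedding $\bar H:E\hookrightarrow D^M\times L_n^{2N-1}\times B$, take a tubular neighborhood to get a codimension-$0$ disk bundle $D(E)$ sitting inside the trivial bundle, and observe that the closure of the complement $(D^M\times L_n^{2N-1}\times B)\setminus D^\circ(E)$ is an \hcob bundle of $L_n^{2N-1}\times S^{M-1}$ (not of $L_n^{2N-1}$ itself). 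Additivity then reads
\[
\tau^\delta(\text{complement},\F)+\tau^\delta(D(E),\F)-\tau^\delta(S(E),\F)=\tau^\delta(D^M\times L_n^{2N-1}\times B,\F)=0,
\]
and transfer (Lemma~\ref{disklemma}) plus the vanishing of $\tau^\delta$ on sphere bundles (for $M$ odd) turns this into $\tau^\delta(E,\F)=-\tau^\delta(\text{complement},\F)$. A short further argument then reduces the \hcob of $L_n^{2N-1}\times S^{M-1}$ to one of $L_n^{2N-1}$ by filling in the $S^{M-1}$ factor with a disk, at which point the previous lemma applies.

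So the paper's route is more elementary and avoids the smoothing-theory input you flagged as the ``main obstacle''; the price is the extra $S^{M-1}$ factor and the reduction step. Your Step~3 formula is also not quite right as written: having an \hcob with top $E\times D^M$ does not by itself give an additive decomposition of $\tau^\delta(E,\F)$; you would need to argue via both relative torsions $\tau^\delta(h,\partial_0)$ and $\tau^\delta(h,\partial_1)$, and the previous lemma only directly controls the one based at $L_n^{2N-1}$. The embedding/complement trick sidesteps this bookkeeping entirely.
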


\begin{proof} Denote the fiber homotopy equivalence $H:E\to L_n^{2N-1}\times B.$ We can take the product of $L_n^{2N-1}\times B$ with a large dimensional disk $D^M$ and make $H$ into an embedding
	$$\bar H:E\stackrel{\sim}{\into} D^M \times L_n^{2N-1}\times B.$$
Then we can take a tubular neighborhood of $\bar H(E)\subseteq D^M\times L_n^{2N-1}\times B$ to get a codimension $0$ embedding of an $M$-dimensional disk bundle $D(E)$ over $E$
	$$G:D(E)\stackrel\sim\into D^M\times L_n^{2N-1}\times B.$$
Then $(D^M\times L_n^{2N-1}\times B)\backslash G(D^\circ(E))$ is an \hcob bundle of $L_n^{2N-1}\times S^{M-1}$ over $B$ and by the additivity axiom its difference torsion is given by
	\begin{eqnarray*}
		\tau^\delta((D^M\times L_n^{2N-1}\times B)\backslash G(D^\circ(E)),\F)	&	+	&\\
		\tau^\delta(D(E),\F)-\tau^\delta(S(E),\F)													&	=	&	\tau^\delta(D^M\times L_n^{2N-1}\times B,\F)=0,
	\end{eqnarray*}
since the last bundle is trivial. $S(E)$ hereby is the sphere bundle given as the vertical boundary of $D(E).$ We can use the transfer axiom to show that	
	$$\tau^\delta(D(E),\F)=\tau^\delta(E,\F)$$
and given that $M$ is odd
	$$\tau^\delta(S(E),\F)=0$$
because the difference torsion is zero on any disk and sphere bundles. So it suffices to show that the difference torsion is zero on any \hcob bundle of $L_n^{2N-1}\times S^{M-1}$ over $B$ for arbirtrarily large $N.$ Such a bundle can easily be reduced to an \hcob bundle of $L_n^{2N-1}$ without changing its torsion: Let $H\to B$ be an \hcob bundle of $L_n^{2N-1}\times S^{M-1}.$ We can embedd $S^{M-1}\times I$ as a tubular neighborhood of $S^{M-1}$ into $D^M$ and thereby get 
	$$H\supseteq L_n^{2N-1}\times S^{M-1}\times B\into L_n^{2N-1}\times D^M\times 1\times B\subseteq L_n^{2N-1}\times D^M\times I\times B$$
and we can define the \hcob bundle of $L_n^{2N-1}\times D^M$ (and thereby of $L_n^{2N-1}$ by stability) 
	$$H':=H\cup_{L_n^{2N-1}\times S^{M-1}\times B} L_n^{2N-1}\times D^M\times I\times B.$$
We calculate using the relative additivity properties of higher torsion for any local system $\F$ in $L_n^{2N-1}$ extended naturally to $H$, $H'$ and $L_n^{2N-1}\times D^M\times I\times B$
	\begin{eqnarray*}
		\tau^\delta(H',\F)	&	=	&	\tau^\delta(H',L_n\times D^M\times 0\times B,\F)\\
					&	=	&	\tau^\delta(H,L_n^{2N-1}\times S^{M-1}\times B, \F)\\
					&	+	&	\tau^\delta(L_n^{2N-1}\times D^m\times I\times B,L_n^{2N-1}\times D^m\times 0\times B,\F)\\
					&	=	&	\tau(H,\F)
	\end{eqnarray*}

With this construction on \hcob bundles the proof now follows from the previous Lemma.
\end{proof}

\begin{Lemma} The difference torsion $\tau^\delta$  is $0$ on any  bundle pair $(E,\partial_0)\to B$ the fibers $(F,\partial_0)$ of which  are h-cobordisms and have a local system $\F$ inducing a cyclic $n$-fold holonomy covering.
\end{Lemma}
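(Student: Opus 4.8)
\ By the reductions of Section 5.1 I would assume $B$ simply connected, and by additivity for coefficients that $\F$ is $1$-dimensional; over the simply connected base it is then isomorphic to $\F_\zeta$ for some $n$-th root of unity $\zeta$, which I fix. The plan is to realise the bundle pair $(E,\partial_0)$ fiberwise inside a trivial product bundle over $B$ whose fiber is a lens space times a disk, and then to peel off the complement using the three preceding lemmas.

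First, by the stability theorem the relative torsion is unchanged if $(F,\partial_0F)$ is replaced by $(F\times D^k,\partial_0F\times D^k)$, which is again an \textit{h}-cobordism; hence $\dim F$ may be taken as large as convenient relative to $\dim B$. The $n$-fold holonomy covering of $E$ is classified by a map $E\to B\Z/n$, and since $L_n^{2N-1}\to B\Z/n$ is $(2N-2)$-connected this lifts, for $N$ large, to a map $g\colon E\to L_n^{2N-1}$ with $g^*\F_\zeta\cong\F_\zeta$. Combining $g$ with a fiberwise embedding $E\into\R^M\times B$ over $B$ (available for $M$ large and odd by the parametrised Whitney theorem, since $B$ is compact) and compactifying, I obtain a fiberwise codimension-$0$ embedding of a normal (linear) disk bundle $q\colon D(E)\to E$,
$$D(E)\;\into\;L_n^{2N-1}\times D^M\times B,$$
arranged so that the boundary faces of the bundle pair $D(E)$ go to the appropriate faces of $L_n^{2N-1}\times D^M\times B$, with $\partial_0D(E)=q^{-1}\partial_0E$.

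Let $C\to B$ be the closure of the complement of $D(E)$ in $L_n^{2N-1}\times D^M\times B$. Since $(F,\partial_0F)$ is an \hcob and $\dim F$ is small relative to $2N-1+M$, a general-position and Alexander-duality argument should identify the fiber of $C$ with $L_n^{2N-1}\times D^M$ with a thin tube around $F$ removed, and this is an \hcob of $L_n^{2N-1}\times S^{M-1}$ (a bundle pair of such along the boundary faces); by the reduction already carried out in the preceding lemma it may be replaced by an \hcob bundle of $L_n^{2N-1}$ without changing the difference torsion. Consequently $\tau^\delta(C,\partial_0,\F)=0$ by the lemma on \hcob bundles of $L_n^{2N-1}$, while $\tau^\delta(D(E)\cap C,\partial_0,\F)=0$ because $D(E)\cap C$ is a sphere bundle over an \hcob bundle, and $\tau^\delta(D(E),\partial_0,\F)=\tau^\delta(E,\partial_0,\F)$ by Lemma \ref{disklemma}. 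Applying relative additivity to the decomposition $L_n^{2N-1}\times D^M\times B=D(E)\cup C$, and using that $\tau^\delta$ vanishes on the trivial bundle $L_n^{2N-1}\times D^M\times B$ over the simply connected base $B$ as well as on its product boundary faces, I would conclude $\tau^\delta(E,\partial_0,\F)=0$.

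The hard part will be the geometric input of the middle paragraphs: arranging the fiberwise embedding so that it is simultaneously compatible with the holonomy data — so that the first coordinate genuinely classifies $\F_\zeta$ — and so that the complement $C$ really has the required \hcob type. It is precisely here that the \hcob hypothesis on $(F,\partial_0F)$ enters, since $F\simeq\partial_0F$ forces $F$ to sit homotopically unknotted inside the product, which is what makes the complement an \hcob of the boundary; the parametrised embedding and tubular-neighbourhood theorems needed for the family version are available because $B$ is compact and $N,M$ can be taken arbitrarily large.
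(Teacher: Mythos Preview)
Your overall strategy --- embed in a trivial lens-space product and unwind via relative additivity and the preceding lemmas --- is the right idea, but the middle step has a genuine gap. You assert that the complement $C$ of $D(E)$ in $L_n^{2N-1}\times D^M\times B$ is an \hcob bundle of $L_n^{2N-1}\times S^{M-1}$. This is exactly what happens in the proof of the \emph{previous} lemma, but there the hypothesis was that the fiber of $E$ is homotopy equivalent to $L_n^{2N-1}$; here $(F,\partial_0 F)$ is an arbitrary \hcob and $\partial_0 F$ need not look like a lens space at all. By excision and duality, $H_*(L_n^{2N-1}\times D^M, C_x)\cong H_*(D(F_x),\partial D(F_x))\cong H^{\,2N-1+M-*}(F_x)$, so whenever $\partial_0F$ has nontrivial reduced homology the complement $C_x$ cannot be homotopy equivalent to $L_n^{2N-1}\times S^{M-1}$ and is therefore not an \hcob over it. The reduction you invoke from the preceding lemma only treated \hcobs whose \emph{both} ends are $L_n^{2N-1}\times S^{M-1}$; it does not apply to $C$.

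The paper avoids this by gluing rather than cutting. It embeds only $\partial_0 E$ (into $B\times L_n^N$ via the holonomy classifying map, and into $B\times D^N$ to obtain a normal disk bundle $\partial_0 D$), then uses $E\simeq\partial_0 E$ to extend $\partial_0 D$ to a linear disk bundle $q:D\to E$, so that $\tau^\delta(E,\partial_0,\F)=\tau^\delta(D,\partial_0,q^*\F)$ by Lemma~\ref{disklemma}. The crucial move is to attach $D$ to the trivial bundle $B\times D^2\times L_n^N$ along $\partial_0 D$: since one has only glued on a thickened \hcob, the resulting closed-fiber bundle $B\times D^2\times L_n^N\cup_{\partial_0 D} D$ is fiber homotopy equivalent to the trivial lens-space bundle, and now the \emph{second} preceding lemma (vanishing on bundles fiber homotopy equivalent to $L_n^{2N-1}\times B$) applies directly. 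Relative additivity then isolates $\tau^\delta(D,\partial_0,q^*\F)$. In short, the \hcob hypothesis is used not to control a complement but to guarantee that attaching $D$ does not change the fiber homotopy type of the trivial piece.
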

\begin{proof} First observe that since $(F,\partial_0)$ is an h-cobordism, $F\simeq \partial_0 F$ and therefore the local system is already given by a local system on $\partial_0 F.$ The holonomy covering of $\partial_0 F$ gives rise to a map $\partial_0 F\to K(\Z/n,1)\simeq L^\infty_n,$ and therefore we get a map $\partial_0F\to L_n^N$ and a map $\partial_0 E\into B\times L_n^N$ for a large $N$. We can choose a local system $\F_L$ on $L^N_n$ that restricts to $\F_{|\partial_0 E}$ on $\partial_0 E.$\\
Now choose a smooth fibered embedding of $\partial_0 E$ into $B\times D^N$ for some large $N$ (we use the same $N$ as before, meaning that we will eventually have to enlarge it). Let $\partial_0D$ be the normal disk bundle of $\partial_0 E$ in $B\times D^N$. Since $E$ is fiber homotopy equivalent to $\partial_0 E$, because it is a h-cobordism bundle, the linear disk bundle $\partial_0 D$ extends to a linear disk bundle $D$ over $E$ with projection $q:D\to E.$ By a previous lemma we have $\tau_B^\delta(E,\partial_0,\F)=\tau_B^\delta(D,\partial_0,q^*\F).$ Thus it is enough to show that $\tau_B^\delta(D,\partial_0,q^*\F)=0$. Now we can also look at the normal disk bundle of $\partial_0 E$ in $B\times L_n^N$ and get a map $\partial_0 D\to B\times D^2\times L_n^N,$ where $\partial_0 D$ is embedded into $B\times S^1\times L_n^N\subseteq \partial^v (B\times D^2\times L_n^N).$ We can extend the local system $\F_L$ to $\F_{D^2\times L}$ and $D^2\times L_n^N$ by pulling it back along the projection onto the second component. Now we look at the union
	$$B\times D^2\times L^N_n\cup_{\partial_0 D} D.$$
This will be a lens space bundle with local system induced by the gluing together of the local coefficient systems on the two components. This lens space bundle will also be fiber homotopically equivalent to a trivial bundle. We can assume without loss of generality that $N$ is odd with the  previous lemma and additivity of higher torsion we get
	\begin{eqnarray*}
		\tau_B^\delta(D,\partial_0,q^*\F)&=&\tau_B^\delta(D,q^*\F)-\tau_B^\delta(\partial_0,q^*\F|{\partial_0 D})+\tau_B(B\times D^2\times L^N_n,\F_{D^2\times L})\\
																			&=&\tau^\delta((B\times L_n^N)\cup D,\F_L\cup q^*\F)=0.
\end{eqnarray*}

\end{proof}

\begin{Theorem}
The difference torsion $\tau^\delta$ is a fiber homotopy invariant of  smooth bundle pairs with local systems.
\end{Theorem}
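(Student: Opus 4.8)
Here is the plan. The strategy is to realise a fiber homotopy equivalence geometrically by an h-cobordism bundle and then feed it into the preceding Lemma (vanishing of $\tau^\delta$ on h-cobordism bundle pairs with cyclic holonomy), together with Lemma \ref{disklemma} and the additivity axiom.

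\textbf{Reductions.} Pulling $E_1,E_2$ and the fiber homotopy equivalence $f\colon E_1\to E_2$ back along the universal covering $q\colon\tilde B\to B$ yields a fiber homotopy equivalence over $\tilde B$, and since $\pi_1 B$ is finite $q^*$ is injective on real cohomology; so we may assume $B$ is simply connected, hence rationally simply connected, so that the earlier lemmas of this section apply. Because, by the definitions of Section 4, $\tau^\delta$ on a bundle (pair) with vertical boundary is a half-sum of values of $\tau^\delta$ on fiberwise doubles (which have closed fiber) and on the boundary pieces $\partial^v E,\ \partial_0E$ (which have fibers of strictly smaller dimension), an induction on $\dim F$ reduces the statement to the case of closed fibers. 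Finally, $f$ induces compatible fiber homotopy equivalences $\tilde E_1/H\to\tilde E_2/H$ on the cyclic holonomy covers (again pulled back from one another), so by the reduction of Section 5.1 (Artin's induction theorem) we may assume $\F_2$, hence $\F_1=f^*\F_2$ and all restrictions, induces a cyclic $n$-fold holonomy covering.

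\textbf{The construction.} Fix $N\gg\dim E_1$ and a fiberwise embedding $E_2\into B\times\R^N$; let $q\colon D_2\to E_2$ be its normal disk bundle, a linear $D^N$-bundle, so $\tau^\delta_B(D_2,q^*\F_2)=\tau^\delta_B(E_2,\F_2)$ by Lemma \ref{disklemma}. Using $f$, approximate the composite $E_1\to E_2\into\mathrm{int}\,D_2$ fiberwise by an embedding $\hat f\colon E_1\into\mathrm{int}\,D_2$; then $\hat f(E_1)\into D_2$ is a fiberwise homotopy equivalence and its composite with $q$ is fiber-homotopic to $f$. Let $D_1\to E_1$ be a small tubular neighbourhood (a linear $D^N$-bundle), $S(E_1)=\partial^vD_1$ its sphere bundle, and $W:=\overline{D_2\setminus\mathrm{int}\,D_1}$, so that $D_2=D_1\cup_{S(E_1)}W$. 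Excision gives $H_*(W_x,S(E_1)_x)\cong H_*(D_{2,x},D_{1,x})=0$ since $D_{1,x}\into D_{2,x}$ is a homotopy equivalence; with the $\pi_1$-isomorphism (automatic for $N$ large) this shows $S(E_1)_x\into W_x$ is a homotopy equivalence, i.e.\ $(W,S(E_1))\to B$ is an h-cobordism bundle pair. Equip $D_2$ with $\F:=q^*\F_2$; all restrictions then have cyclic holonomy, and $\F_{|D_1}$ restricted to $E_1$ is $f^*\F_2=\F_1$.

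\textbf{Conclusion.} The preceding Lemma gives $\tau^\delta_B(W,S(E_1),\F_{|W})=0$, i.e.\ $\tau^\delta_B(W,\F_{|W})=\tau^\delta_B(S(E_1),\F_{|S(E_1)})$. Applying additivity in the boundary case (Lemma \ref{additivity}) to $D_2=D_1\cup W$,
$$\tau^\delta_B(D_2,\F)=\tau^\delta_B(D_1,\F_{|D_1})+\tau^\delta_B(W,\F_{|W})-\tau^\delta_B(S(E_1),\F_{|S(E_1)})=\tau^\delta_B(D_1,\F_{|D_1}),$$
and since $\tau^\delta_B(D_2,\F)=\tau^\delta_B(E_2,\F_2)$ and $\tau^\delta_B(D_1,\F_{|D_1})=\tau^\delta_B(E_1,\F_1)$ by Lemma \ref{disklemma}, we get $\tau^\delta(E_1,\F_1)=\tau^\delta(E_2,\F_2)$; unwinding the reductions finishes the proof. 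The only genuine work is the geometric input of the second step: promoting, in families over $B$ and for $N$ large, the fiber homotopy equivalence $f$ to a codimension-zero fiberwise embedding whose complement is an honest (one-sided) h-cobordism bundle, and checking that the resulting homotopy equivalences are compatible with $f$ so that the local systems match. Everything else is formal bookkeeping with Lemma \ref{disklemma}, the additivity axiom, and the vanishing of $\tau^\delta$ on h-cobordism bundle pairs with cyclic holonomy.
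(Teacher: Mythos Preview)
Your core argument---embed $E_1$ into a high-dimensional disk bundle $D_2$ over $E_2$, take the complement of a tubular neighbourhood to get an h-cobordism bundle, apply the preceding Lemma together with additivity and Lemma~\ref{disklemma}---is exactly the paper's proof. The reductions to simply connected $B$ and to cyclic holonomy are also standing assumptions in the paper (see Section~5.1 and the Remark before the lemmas in this section).

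The one problematic step is your reduction to closed fibers by ``induction on $\dim F$''. A fiber homotopy equivalence of \emph{pairs} $f\colon(E_1,\partial_0)\to(E_2,\partial_0)$ gives you $\partial_0 E_1\simeq\partial_0 E_2$, but it does \emph{not} in general induce fiber homotopy equivalences $\partial^v E_1\simeq\partial^v E_2$ or $DE_1\simeq DE_2$: the map $f$ need not carry $\partial_1 E_1$ to $\partial_1 E_2$, and a homotopy equivalence of manifolds with boundary does not automatically double to a homotopy equivalence of the doubles. So the inductive step as written does not go through. The paper simply avoids this by running the same embedding/h-cobordism argument directly for pairs: one embeds $(E_1,\partial_0)$ into a disk bundle $(D_2,\partial_0)$ over $(E_2,\partial_0)$ and uses the \emph{relative} additivity proposition instead of Lemma~\ref{additivity}. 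Your construction adapts to that setting with no essential change, so the fix is to drop the closed-fiber reduction and keep $\partial_0$ in the picture throughout.
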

\begin{proof}

Suppose that $(E_1,\partial_0)$ and $(E_2,\partial_0)$ are  smooth bundle pairs over $B$ which are fiber homotopy equivalent and have matching local coefficient systems $\F_1$ and $\F_2.$ We want to show that
	$$\tau^\delta(E_1,\partial_0, \F_1)=\tau^\delta(E_2,\partial_0,\F_2).$$
If we replace $(E_2,\partial_0)$ by a large dimensional disk bundle $(D_2,\partial_0)$ (with local system $q_2^*\F_2$ for $q_2:D_2\to E_2$), we can approximate the fiber homotopy equivalence by a fiberwise smooth embedding
	$$g:(E_1,\partial_0)\to (D_2,\partial_0).$$
Let $D_1$ be the normal disk bundle of $E_1$ in $D_2$ (with local system $q_1^*\F_1$ for $q_1:D_1\to E_1$). Then the closure of the complement of  $D_1$ in $D_2$ is a  fiberwise $h$-cobordism with local coefficient given by restricting $q_2^*\F_2$, giving an $h$-cobordism bundle $H$, which has trivial $\tau^\delta$ by the previous Lemma. Using the relative additivity we therefore get 
	$$\tau^\delta(D_2,\partial_0,q^*\F_2)=\tau^\delta(H,\partial_1 D_1,(q_2^*F_2)_{|H})+\tau^\delta(D_1,\partial_0,q_1^*\F_1)=\tau^\delta(D_1,\partial_0,q_1^*\F_1).$$
By Lemma \ref{disklemma} this yields $\tau^\delta(E_1,\partial_0,\F_1)=\tau^\delta(E_2,\partial_0,\F_2).$
\end{proof}

\begin{Remark}
Since $\tau^\delta$ is a fiber homotopy equivalence, it is well defined on any fibration $(Z,C)\to B$ with fiber $(X,A)$ and local system $\F$ on $X$ which is smoothable in the sense that it is fiber homotopy equivalent to a smooth bundle pair $(E,\partial_0)$ with compact manifold fiber $(F,\partial_0)$.
\end{Remark}

\section{Triviality of the Difference Torsion}

\subsection{Lens Space Suspensions}

In the following, we want to define for a space $F$ with local system $\F$ inducing an $n$-fold holonomy cover a suspension construction, which respects the local system. Let us recall that the usual suspension $\Sigma F$ is defined by the homotopy push-out
	$$\xymatrix{
			F\ar[r]\ar[d]	&	S^\infty\ar[d]\\
			S^\infty\ar[r]			&	\Sigma F.
		}
	$$
Since $S^\infty$ is contractible, we know that $\pi_1\Sigma F=0$, and therefore this construction cannot give us a non-constant local system on $\Sigma F$. Now we make the following definition:

\begin{Definition}[lens space suspension] Let $F$ be a topological space with local system $\F$ on $F$ inducing an $n$-fold holonomy cover $\tilde F\to F$ with finite cyclic transition group. The cover gives us a mapping $F\to L_n^{2N}$ for a large $N\in \N$ (because $L^\infty_n\cong K(\Z/n, 1)$). Using this map, we can define the lens space suspension $\Sigma_n F$ as the homotopy push-out
	$$\xymatrix{
			F\ar[r]\ar[d]	&	L_n^{2N}\ar[d]\\
			L_n^{2N}\ar[r]		&	\Sigma_n F.
		}
	$$
\end{Definition}

\begin{Remark}
We will drop $N$ from the notation and consider it to be very large.
\end{Remark}

We have the earlier introduced local systems $\F_\zeta$ on $L^{2N}_n$ for an $n$-th root of unity $\zeta.$ By choosing the map $i:F\to L_n^{2N}$ properly, we can assume that $\F=i^*\F_{e^{2\pi i/n}}.$ So we get a local system $\Sigma\F=\F_{e^{2\pi i/n}}\cup_\F\F_{e^{2\pi i/n}}$ on $\Sigma_n F.$ From this we get the holonomy covering $\tilde{\Sigma_n F}\stackrel{n}{\to} \Sigma_n F$; but we also have the holonomy covering $\tilde{F}\stackrel{n}{\to} F$. These two covering spaces are related by the following Lemma:

\begin{Lemma}\label{coveringlemma} In the setting above, we have
	$$\tilde{\Sigma_n F}\simeq \Sigma\tilde{F}$$
in low degrees (smaller than $2N$).
\end{Lemma}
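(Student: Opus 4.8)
The plan is to compute both covering spaces directly from the homotopy push-out squares defining the lens space suspension and verify they agree through dimension $2N$. First I would recall that $\Sigma_n F$ is the homotopy push-out of $L_n^{2N} \leftarrow F \rightarrow L_n^{2N}$ along the classifying map $i \colon F \to L_n^{2N}$ of the $n$-fold holonomy cover, and that the local system $\Sigma\F$ on $\Sigma_n F$ restricts on each copy of $L_n^{2N}$ to $\F_{e^{2\pi i/n}}$; in particular the composite $F \to L_n^{2N} \hookrightarrow \Sigma_n F$ classifies the restricted local system $\F$, and $\pi_1 \Sigma_n F \cong \Z/n$ (van Kampen, since $L_n^{2N}$ has $\pi_1 = \Z/n$ and $F \to L_n^{2N}$ is $\pi_1$-surjective by construction of $i$, at least after choosing $N$ large). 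So the holonomy cover $\widetilde{\Sigma_n F}$ is the pull-back of the universal cover along $\Sigma_n F \to K(\Z/n,1)$.

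The key step is then to pull back the entire push-out square along $\Sigma_n F \to B\Z/n$, i.e. to take the $\Z/n$-cover of each term. Covering space theory (equivalently, the fact that $p^* \colon \mathrm{(spaces\ over\ }B\Z/n) \to (\Z/n\mathrm{-spaces)}$ preserves homotopy push-outs, since pulling back is a right adjoint that here also commutes with the relevant homotopy colimits) gives that $\widetilde{\Sigma_n F}$ is the homotopy push-out of $\widetilde{L_n^{2N}} \leftarrow \widetilde{F} \rightarrow \widetilde{L_n^{2N}}$, where $\widetilde{L_n^{2N}}$ is the $n$-fold cover of $L_n^{2N}$, namely $S^{2N+1}$, which is $2N$-connected. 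Thus through dimension $2N$ we may replace each $\widetilde{L_n^{2N}} = S^{2N+1}$ by a point without changing the homotopy type in that range (more precisely, the map from the push-out with $S^{2N+1}$ to the push-out with a point — which is $\Sigma\widetilde{F}$ — is $2N$-connected, by a Blakers–Massey / excision argument comparing the two push-out squares). This is exactly the assertion $\widetilde{\Sigma_n F} \simeq \Sigma \widetilde{F}$ in degrees below $2N$.

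I would organize the write-up as: (1) identify $\pi_1 \Sigma_n F$ and the classifying map to $B\Z/n$; (2) state that taking $\Z/n$-covers commutes with the defining homotopy push-out, so $\widetilde{\Sigma_n F} = \mathrm{hocolim}(S^{2N+1} \leftarrow \widetilde F \rightarrow S^{2N+1})$; (3) compare this with $\Sigma\widetilde F = \mathrm{hocolim}(\ast \leftarrow \widetilde F \rightarrow \ast)$ via the connectivity of $S^{2N+1} \to \ast$ and conclude the maps agree in degrees $< 2N$.

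The main obstacle I expect is step (2): justifying cleanly that forming the holonomy cover — a pull-back along $\Sigma_n F \to B\Z/n$ — is compatible with the homotopy push-out defining $\Sigma_n F$. One has to be a little careful because the covering of a push-out is only the push-out of the coverings when the covering is pulled back from the total space and the maps in the diagram are covered compatibly; here this holds because all the maps ($F \to L_n^{2N}$ and the structure maps into $\Sigma_n F$) are maps over $B\Z/n$ by construction, so their $\Z/n$-covers exist and assemble into a diagram whose homotopy colimit is the $\Z/n$-cover of $\Sigma_n F$. Once that bookkeeping is in place, the connectivity comparison in step (3) is routine. The restriction to degrees $< 2N$ is harmless since we always take $N$ arbitrarily large, as noted in the preceding remark.
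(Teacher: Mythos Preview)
Your approach is essentially the paper's: both identify the holonomy cover of the push-out $\Sigma_n F$ with the push-out of the covers (spheres over the two copies of $L_n^{2N}$ and $\tilde F$ over $F$), and then compare this with $\Sigma\tilde F$ via the high connectivity of those spheres. The paper carries this out concretely by drawing the cube and using the universal property of the push-out together with the free $\Z/n$-action on the back face, rather than your categorical remark about covers commuting with homotopy push-outs; note also that in the paper's conventions the $n$-fold cover of $L_n^{2N}$ is written $S^{2N}$, not $S^{2N+1}$.
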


\begin{proof} By using the coverings $\tilde{F}\stackrel{n}{\to} F$ and $S^{2N}\stackrel{n}{\to}L^{2N}_n$, we get the diagram
	$$\xymatrix{
			\tilde{F}\ar[rr]\ar[dd]\ar[dr]^n	&	&	S^{2N}\ar'[d][dd]\ar[dr]^n\\
					&	F\ar[rr]\ar[dd]	&	&	L_n^{2N}\ar[dd]\\
			S^{2N}\ar'[r][rr]\ar[dr]^n							&	&	\Sigma(N)\tilde{F}\ar@{.>}[dr]\\
					&	L_n^{2N}\ar[rr]	&&	\Sigma_n F,
			}
	$$
where $\Sigma(N)\tilde F:=S^{2N}\cup_{\tilde F}S^{2N}$ is homotopy equivalent in low degrees to $\Sigma\tilde F$ because there is an $2N$-connected map 
	$$\Sigma(N)\tilde F\to \Sigma \tilde F\cong S^\infty\cup_{\tilde F} S^\infty.$$
The universal property of the push-out gives the map $\Sigma(N)\tilde{F}\to \Sigma_n F$. Being covering spaces, $\tilde F$ and $S^{2N}$ both admit an $n$-action, and the map $\tilde F\to S^{2N}$ will be $n$-equivariant. Therefore the push-out $\Sigma(N)\tilde F$ will also admit an $n$-action, and from this it follows that the map $\Sigma(N)\tilde F\to \Sigma_n F$ is an $n$-fold covering, and therefore we have $\tilde{\Sigma_n F}\simeq \Sigma\tilde{F}$ in low degrees.
\end{proof}

For the usual suspension, it is well known that $H_{k+1}(\Sigma F;\R)\cong H_k(F; \R)$ for all $k\in \N$. For the lens space suspension this becomes:

\begin{Lemma}\label{homologylemma}
For every topological space $F$ with local system inducing an $n$-fold holonomy covering, we have for $k\geq 1$
	$$H_{k+1}(\Sigma_n F;\R)\cong H_k(F; \R).$$
\end{Lemma}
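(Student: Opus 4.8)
The plan is to compute the reduced homology of the homotopy push-out defining $\Sigma_n F$ by means of the associated Mayer--Vietoris sequence, exploiting the fact that $L_n^{2N}$ has trivial rational homology in the range we care about. Recall that $\Sigma_n F$ is the homotopy push-out of the diagram with $F$ mapping to two copies of $L_n^{2N}$, so there is a Mayer--Vietoris long exact sequence
	$$\cdots\to H_{k+1}(F;\R)\to H_{k+1}(L_n^{2N};\R)\oplus H_{k+1}(L_n^{2N};\R)\to H_{k+1}(\Sigma_n F;\R)\to H_k(F;\R)\to \cdots$$
(the connecting map lands in $H_k$ of the intersection, which is modeled by $F$). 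The key input is that $L_n^{2N}=K(\Z/n,1)$ in the relevant range, and since $\Z/n$ is finite, $H_j(L_n^{2N};\R)=0$ for all $1\le j<2N$ while $H_0(L_n^{2N};\R)=\R$.

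First I would fix $k$ with $1\le k$ and $N$ chosen very large (as already stipulated, $\Sigma_n F$ only depends on $N$ up to the range $<2N$, so enlarging $N$ is harmless), so that both $H_{k+1}$ and $H_k$ of $L_n^{2N}$ vanish rationally. Then the Mayer--Vietoris sequence reads
	$$0=H_{k+1}(L_n^{2N};\R)^{\oplus 2}\to H_{k+1}(\Sigma_n F;\R)\to H_k(F;\R)\to H_k(L_n^{2N};\R)^{\oplus 2}=0,$$
which immediately gives the isomorphism $H_{k+1}(\Sigma_n F;\R)\cong H_k(F;\R)$. One must be a little careful at the low end: for $k=1$ the term $H_1(L_n^{2N};\R)$ still vanishes rationally (it is $\Z/n$ integrally, hence $0$ over $\R$), and the term $H_0$ of the intersection only enters the sequence for the computation of $H_1(\Sigma_n F)$, not $H_{k+1}$ for $k\ge 1$, so the stated range $k\ge 1$ is exactly what the argument delivers.

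The main obstacle, such as it is, is bookkeeping rather than conceptual: one has to make sure the homotopy push-out is genuinely covered by a Mayer--Vietoris sequence (replace the maps $F\to L_n^{2N}$ by cofibrations, or equivalently use the double mapping cylinder model), and one has to confirm that the maps inducing the push-out do not disturb the vanishing — but since the target $L_n^{2N}$ has vanishing reduced rational homology in the range $<2N$ regardless of the map, this is automatic. An alternative, perhaps cleaner, route would be to invoke Lemma \ref{coveringlemma}: rationally $H_*(\Sigma_n F;\R)\cong H_*(\tilde{\Sigma_n F};\R)^{\Z/n}$ is not literally true (homology of a cover is not the coinvariants of the base in general), so I would \emph{not} pursue that route and would stick with the direct Mayer--Vietoris computation above, which is self-contained and needs only the homotopy-invariance of singular homology together with the finiteness of $\Z/n$.
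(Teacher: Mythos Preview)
Your proof is correct and follows exactly the same approach as the paper: apply the Mayer--Vietoris sequence to the defining homotopy push-out and use that $L_n^{2N}$ has trivial reduced rational homology in degrees below $2N$. The paper's argument is the same two-line computation, without your additional remarks about the low-degree bookkeeping and the alternative covering route.
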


\begin{proof} Using the Mayer Vietoris sequence for the defining push-out of the lens space suspension, we get
	\begin{eqnarray*} 
	\to 	& H_{k+1}(L_n^{2N}; \R)\oplus H_{k+1}(L_n^{2N}; \R)	& \to H_{k+1}(\Sigma_n F;\R)\to H_k(F;\R)\\
	\to		&	H_{k}(L_n^{2N}; \R)\oplus H_{k}(L_n^{2N}; \R)\to
	\end{eqnarray*}
The fact that $L_n^{2N}$ is rationally homologically trivial now yields the desired isomorphism.
\end{proof}

Furthermore, we know for the usual suspension that $\pi_m^S(F)\otimes \R\cong \bar{H}_m(F;\R)$, where $\pi_m^S(F):=\pi_m(\colim_k\Omega^k\Sigma^k F)$ denotes the stabilized homotopy group. This becomes:

\begin{Lemma}
If $k\in \N$ is large enough, and $F$ is a space with local system inducing an $n$-fold holonomy covering, we have an isomorphism
	$$\pi_{m+k}(\Sigma_n^k F)\otimes \R\cong \bar{H}_{m+k}(\Sigma^k\tilde F;\R)$$
for $m+k<N$. 
\end{Lemma}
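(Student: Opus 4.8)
The idea is to transport the computation of $\pi_{m+k}(\Sigma_n^k F)$ to the holonomy cover, where the classical identification of rational stable homotopy with rational homology takes over. First I would record that $\pi_1(\Sigma_n G)\cong\Z/n$ whenever $G$ carries a local system with full $n$-fold cyclic holonomy: van Kampen applied to the defining homotopy push-out gives $\Z/n \ast_{\pi_1 G}\Z/n$ with \emph{both} structure maps equal to the given surjection $\pi_1 G\twoheadrightarrow\Z/n$, and such an amalgam collapses to $\Z/n$ (the pushout of $\Z/n\twoheadleftarrow\pi_1 G\xrightarrow{\phi}\Z/n$ with the first map onto is $\Z/n/\langle\langle\phi(\ker\phi)\rangle\rangle=\Z/n$). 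Hence the $n$-fold holonomy cover of $\Sigma_n G$ is its universal cover; in particular the construction iterates and each $\tilde{\Sigma_n^j F}$ is simply connected.

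Next I would iterate Lemma \ref{coveringlemma}. Choosing the lens space $L_n^{2N}$ that enters the $j$-th suspension to have dimension $\gg N$, the lemma applied to $G=\Sigma_n^{j-1}F$ produces a highly connected map $\tilde{\Sigma_n^j F}\to\Sigma(\tilde{\Sigma_n^{j-1}F})$. Since the suspension of a highly connected map between simply connected spaces is again highly connected (suspension isomorphism in reduced homology together with the homology Whitehead theorem), an induction on $j$ yields a map $\tilde{\Sigma_n^k F}\to\Sigma^k\tilde F$ which is an isomorphism on $\pi_j$ for, say, $j<N$. A covering projection induces isomorphisms on all homotopy groups in degrees $\ge 2$, so
	$$\pi_{m+k}(\Sigma_n^k F)\cong\pi_{m+k}(\tilde{\Sigma_n^k F})\cong\pi_{m+k}(\Sigma^k\tilde F)\qquad(2\le m+k<N).$$

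Finally, since $\tilde F$ is a connected finite complex, $\Sigma^k\tilde F$ is $(k-1)$-connected, so once $k$ is large enough that $m+k$ lies in the stable range (here $k\ge m+2$ suffices) the Freudenthal suspension theorem identifies $\pi_{m+k}(\Sigma^k\tilde F)$ with the stable homotopy group $\pi_m^S(\tilde F)$. Tensoring with $\R$ and invoking the classical isomorphism $\pi_m^S(\tilde F)\otimes\R\cong\bar H_m(\tilde F;\R)$ recalled just before the statement, followed by the homology suspension isomorphism $\bar H_m(\tilde F;\R)\cong\bar H_{m+k}(\Sigma^k\tilde F;\R)$, gives the asserted isomorphism. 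One may equally bypass stable homotopy and apply the rational Hurewicz theorem directly to the $(k-1)$-connected space $\Sigma^k\tilde F$, which is legitimate in the range $m+k\le 2k-2$.

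The step I expect to be most delicate is the bookkeeping in the middle paragraph: one must choose the dimensions of all the auxiliary lens spaces compatibly and large, verify that under iteration the holonomy covers genuinely compose to the universal cover of $\Sigma_n^k F$ (using that the glued local system $\Sigma^j\F$ still has full $\Z/n$ holonomy), and check that the low-degree window inherited from the repeated use of Lemma \ref{coveringlemma} and the stable range required by Freudenthal can be met simultaneously --- which is precisely why one needs both $N$ and $k$ large, in accordance with the hypotheses $m+k<N$ and ``$k$ large enough''. The rest is a routine assembly of standard facts.
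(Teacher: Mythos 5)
Your proof is correct and follows essentially the same route as the paper: iterate Lemma \ref{coveringlemma} to identify the $n$-fold cover of $\Sigma_n^k F$ with $\Sigma^k\tilde F$ in low degrees, pass to the cover for homotopy groups in degree $\ge 2$, and then invoke Freudenthal together with the rational identification of stable homotopy with reduced homology and the suspension isomorphism. The only genuine addition on your part is the van Kampen computation establishing $\pi_1(\Sigma_n G)\cong\Z/n$, which the paper tacitly assumes when it iterates the covering lemma, and which you correctly supply to justify that each $\Sigma_n^j F$ again carries a full cyclic $n$-fold holonomy covering.
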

\begin{proof}
We get the $n$-fold holonomy covering $\tilde{F}\to F.$ Using Lemma \ref{coveringlemma} several times, we get in low degrees
	$$\tilde{\Sigma_n^k F}\simeq\Sigma(\tilde{\Sigma_n^{k-1}F})\simeq\ldots\simeq\Sigma^k\tilde{F}.$$
Thus we have for $N>m+k>1$
	\begin{eqnarray*}
		\pi_{m+k}(\Sigma_n^kF)\otimes \R	&	\cong	&	\pi_{m+k}(\Sigma^k\tilde{F})\otimes\R\\
			&\stackrel{k\textnormal{ large}}{\cong}	&	\pi_m^S(\tilde F)\otimes\R\\
																			&	\cong	&	\bar{H}_m(\tilde F;\R)\\
																			&	\cong	&	\bar{H}_{m+k}(\Sigma^k \tilde F;\R).
	\end{eqnarray*}
\end{proof}

\begin{Remark}
Although we require $k$ to be large in the last Lemma, it does not depend on $N$ at all, meaning that we can still choose $N$ to be much larger than $k$.
\end{Remark}

We will need the following Definition and Proposition:

\begin{Definition} A topological space $F$ is called simple if $\pi_1 F$ is abelian and acts trivially on every $\pi_i F$ for $i\geq 2.$
\end{Definition}

\begin{Proposition} Let $F$ be a path connected, simple space and $\tilde F\stackrel{n}\to F$ an $n$-fold Galois covering. Then the transition group $\Z/n$ will act trivially on $H_*(\tilde F;\R)$
\end{Proposition}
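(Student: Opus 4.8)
The plan is to reduce the statement to the fact that a finite cyclic group acting on a path-connected simple space must act trivially on rational homology, and to extract this from the behaviour of the covering on homotopy groups. First I would fix a generator $g$ of the deck group $\Z/n$ of $\tilde F \stackrel{n}{\to} F$; it induces, for every $i\geq 1$, an automorphism $g_*$ of $\pi_i(\tilde F)$. Since $\tilde F \to F$ is a covering, $\pi_i(\tilde F) \cong \pi_i(F)$ for $i\geq 2$, and under this identification the deck transformation $g_*$ acts as an element of the $\pi_1(F)$-action on $\pi_i(F)$, because the deck group of the covering is a quotient of $\pi_1(F)$ acting through the monodromy. By the hypothesis that $F$ is simple, $\pi_1(F)$ acts trivially on every $\pi_i(F)$ with $i\geq 2$; hence $g_*$ is the identity on $\pi_i(\tilde F)$ for all $i\geq 2$. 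For $i=1$, note $\pi_1(\tilde F)$ is a subgroup of $\pi_1(F)$, which is abelian (again since $F$ is simple), so the conjugation action of the deck group on $\pi_1(\tilde F)$ — which is by inner automorphisms of $\pi_1(F)$ restricted appropriately — is also trivial.

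Next I would pass from homotopy groups to homology. Because $g$ acts trivially on all homotopy groups of $\tilde F$, one expects $g$ to induce the identity on $H_*(\tilde F)$; the cleanest way to see this is to work with the rationalization $\tilde F_\Q$ of $\tilde F$ (which exists since $\tilde F$ is simple — simplicity is inherited by covers of simple spaces, as $\pi_1(\tilde F)$ is a subgroup of the abelian $\pi_1(F)$ and acts trivially on higher homotopy). A self-map of a simple space inducing the identity on all rational homotopy groups $\pi_i(\tilde F)\otimes\Q$ induces the identity on $\tilde F_\Q$ up to homotopy, by obstruction theory / the fact that $\tilde F_\Q$ is built from rational Eilenberg–MacLane spaces with the relevant $k$-invariants, and hence acts as the identity on $H_*(\tilde F_\Q;\Q) \cong H_*(\tilde F;\Q)\cong H_*(\tilde F;\R)\otimes_\Q \C$ after extending scalars. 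Therefore $g_* = \id$ on $H_*(\tilde F;\R)$, which is exactly the assertion that $\Z/n$ acts trivially on $H_*(\tilde F;\R)$.

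An alternative, more hands-on route avoids rationalization: build a Postnikov tower for $\tilde F$ and induct up the tower, using at each stage that a fibration with fiber $K(\pi_i,i)$ and a fiberwise self-map covering the identity below, inducing the identity on $\pi_i$, is homotopic over the base to one inducing the identity on the fiber's cohomology; since the deck action is compatible with the whole tower (the Postnikov tower of $\tilde F$ can be taken $\Z/n$-equivariantly, as it comes from the $\Z/n$-action on $\tilde F$), the induced maps on the cohomology Serre spectral sequences are all the identity, and one concludes by comparing abutments. Either way, the main obstacle is the passage from "trivial on homotopy groups" to "trivial on homology": on the nose a self-map of a simple space inducing the identity on homotopy need not be homotopic to the identity, so one must either rationalize (exploiting that we only need the conclusion over $\R$) or argue spectral-sequence-wise that the \emph{induced map on homology} is nonetheless the identity. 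I would present the rationalization argument as the primary one, since the target coefficients are already $\R$ and this is precisely the setting where inducing the identity on rational homotopy of a simple space forces the identity on rational homology.
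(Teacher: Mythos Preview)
Your primary argument has a genuine gap: the claim that a self-map of a simple space inducing the identity on all $\pi_i\otimes\Q$ is homotopic to the identity on the rationalization is false. A clean counterexample is $X_\Q = K(\Q,2)\times K(\Q,3)\times K(\Q,5)$ with the self-map that is the identity on the first two factors and, on the third, adds the cup-product map $K(\Q,2)\times K(\Q,3)\to K(\Q,5)$ via the $H$-space structure. This map is the identity on every $\pi_i$ (the cup-product map is zero on homotopy groups) but sends the fundamental class $c\in H^5$ to $c+ab$; in minimal-model terms, a DGA endomorphism that is the identity on indecomposables may still differ from the identity by decomposable terms. So rationalizing alone does not close the gap you correctly flag. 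What obstruction theory, or your alternative spectral-sequence route, actually yields is that the deck action is the identity on the $E_2$-page of the Serre spectral sequence and hence on the associated graded of the abutment---in other words, $g_*$ is \emph{unipotent} on $H_*(\tilde F;\R)$, not a priori the identity. Your phrase ``one concludes by comparing abutments'' skips exactly this point.

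The paper's proof follows essentially your Postnikov-tower alternative (inducting up the tower and using the Serre spectral sequence at each stage), but then supplies the missing step: since $\R[\Z/n]$ is semisimple, any unipotent $\Z/n$-representation over $\R$ is trivial. Equivalently, a unipotent operator of finite order over a field of characteristic zero is the identity---and indeed the counterexample above has infinite order. So the finiteness of the deck group, which you never invoke after establishing triviality on homotopy, is precisely what makes the passage from homotopy to homology go through.
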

\begin{proof}
Let $\{F^l\}$ be the Postnikow tower for $F;$ that is a sequence of spaces with $\lim_l F^l\cong F$ and $\pi_i F\cong \pi_i F$ for $0\leq i\leq l$ and $\pi_i F^l\cong 0$ for $i>l.$ Since we have $\pi_1 F^l\cong \pi_1 F$ for every $l>0,$ we have $n$-fold coverings $\tilde F^l\stackrel n \to F^l.$ We will prove by induction that $\Z/n$ acts trivially on $H_*(\tilde F^l;\R).$ The sequence $\{\tilde F^l\}$ will clearly provide a Postnikow tower for $\tilde F$, and since the real homology of the stages of a Postnikow tower stabilizes in every degree, this will prove the proposition.\\
To start the induction we look at $F^1\simeq K(\pi_1 F,1)$, which will only have the first homotopy group $\pi_1 F^1\cong \pi_1 F$. The covering $\tilde F\stackrel n \to F$ gives a map $\alpha:\pi_1 F\to \Z/n.$ Using this, we see that the covering $\tilde F^1\stackrel n \to F^1$ will be an Eilenberg-MacLane space:
	$$\tilde F^1\simeq  K(\ker \alpha, 1).$$
The group $\Z/n$ acts trivially on $\ker \alpha\subseteq \pi_1 F$ because $\pi_1 F$ is abelian, and therefore $\Z/n$ acts trivially on $\tilde F^1\simeq K(\ker \alpha, 1)$ and $H_*(\tilde F^1;\R).$ This starts the induction.\\
Now assume that $\Z/n$ acts trivially on $H_*(\tilde F^{l-1};\R)$ with $l>1$. We have the fibration
	$$K(\pi_l \tilde F, l)\to \tilde F^l\to \tilde F^{l-1}.$$
Since we know $\pi_l F\cong \pi_l \tilde F,$ the group $\Z/n$ will act trivially on $\pi_l \tilde F$ and thereby also trivially on $K(\pi_l \tilde F, l)$ and $H_*(K(\pi_l \tilde F, l); \R)$. By induction assumption it must also act trivially on	
	$$H_i(\tilde F^{l-1}; H_k (K(\pi_l(\tilde F),l);\R)),$$
and thereby it acts trivially on the whole Leray Serre spectral sequence for the fibration $K(\pi_l \tilde F, l)\to \tilde F^l\to \tilde F^{l-1}.$ From this it follows that $\Z/n$ acts unipotently on $H_*(\tilde F_l;\R),$ and since $\R[\Z/n]$ is semisimple, this includes that $\Z/n$ acts trivially on $H_*(\tilde F;\R).$
\end{proof}

From this we get the following important Corollary:

\begin{Corollary}\label{simplefiber} If $F$ is a simple topological space with local system inducing an $n$-fold holonomy covering $\tilde F\stackrel n\to F$, then we have
	$$H_l(\Sigma(N)^k \tilde F;\R)\cong H_l(\Sigma_n^k F;\R)$$
for all $l<2N$.
\end{Corollary}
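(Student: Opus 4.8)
The plan is to combine three ingredients: the identification (in low degrees) of the holonomy cover of an iterated lens-space suspension with the ordinary finite-stage suspension of the cover, the triviality of the deck action on the rational homology of the cover, and the transfer isomorphism for $n$-fold covers over $\R$. First I would iterate Lemma \ref{coveringlemma}, exactly as in the proof of the preceding homotopy-group lemma, to obtain an $n$-equivariant map $\Sigma(N)^k\tilde F \to \tilde{\Sigma_n^k F}$ which is $2N$-connected; in particular it induces a $\Z/n$-equivariant isomorphism $H_l(\Sigma(N)^k\tilde F;\R)\cong H_l(\tilde{\Sigma_n^k F};\R)$ for every $l<2N$. This reduces the Corollary to comparing $H_l(\tilde{\Sigma_n^k F};\R)$ with $H_l(\Sigma_n^k F;\R)$.

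The heart of the argument is to show that $\Z/n$ acts trivially on $H_l(\Sigma(N)^k\tilde F;\R)$ for $l<2N$, which I would do by induction on $k$. The case $k=0$ is precisely the Proposition above: since $F$ is simple and path connected, $\Z/n$ acts trivially on $H_*(\tilde F;\R)$. For the inductive step, apply the Mayer--Vietoris sequence of the homotopy push-out $\Sigma(N)X = S^{2N}\cup_X S^{2N}$ with $X=\Sigma(N)^{k-1}\tilde F$; because $S^{2N}$ is rationally acyclic in positive degrees below $2N$ and the push-out is simply connected for $N$ large, this yields a natural isomorphism $H_l(\Sigma(N)X;\R)\cong H_{l-1}(X;\R)$ for $2\le l<2N$, together with $H_0(\Sigma(N)X;\R)=\R$ and $H_1(\Sigma(N)X;\R)=0$. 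Naturality makes this isomorphism $\Z/n$-equivariant, and the induction hypothesis gives triviality of the action on $H_{l-1}(X;\R)$, so the action on $H_l(\Sigma(N)^k\tilde F;\R)$ is trivial throughout $l<2N$. Transporting back along the $2N$-connected map of the first paragraph, $\Z/n$ acts trivially on $H_l(\tilde{\Sigma_n^k F};\R)$ for all $l<2N$.

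Finally I would invoke the transfer argument for the $n$-fold covering $\tilde{\Sigma_n^k F}\to \Sigma_n^k F$: since $\R[\Z/n]$ is semisimple, transfer followed by the quotient projection identifies $H_l(\Sigma_n^k F;\R)$ with the invariant subspace $H_l(\tilde{\Sigma_n^k F};\R)^{\Z/n}$, which by the triviality just established is all of $H_l(\tilde{\Sigma_n^k F};\R)$. Chaining the three isomorphisms gives $H_l(\Sigma(N)^k\tilde F;\R)\cong H_l(\Sigma_n^k F;\R)$ for every $l<2N$, as claimed.

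I expect the main obstacle to be bookkeeping rather than anything conceptual: one must verify that the $2N$-connected map obtained by iterating Lemma \ref{coveringlemma} is genuinely $\Z/n$-equivariant, that the Mayer--Vietoris identification respects the action, and that every degree shift keeps all invocations inside the range $l<2N$ — which is harmless given the standing convention that $N$ is taken very large relative to $k$. The one substantive input, simplicity of $F$, enters only through the Proposition in the base case of the induction.
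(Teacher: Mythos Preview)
Your proposal is correct, but it takes a somewhat more elaborate route than the paper. The paper applies the transfer argument \emph{once}, at the unsuspended level: simplicity of $F$ plus the Proposition give $H_*(\tilde F;\R)\cong H_*(F;\R)$ directly, and then one simply applies the suspension degree-shift isomorphisms on both sides independently---$H_l(\Sigma(N)^k\tilde F;\R)\cong H_{l-k}(\tilde F;\R)$ via the $2N$-connected map $\Sigma(N)\tilde F\to\Sigma\tilde F$, and $H_l(\Sigma_n^k F;\R)\cong H_{l-k}(F;\R)$ via Lemma~\ref{homologylemma} iterated---and chains through the base isomorphism. Your version instead applies the transfer at the top, for the covering $\tilde{\Sigma_n^k F}\to\Sigma_n^k F$, which forces you to run an induction on $k$ (via Mayer--Vietoris) to propagate the triviality of the $\Z/n$-action through the iterated $\Sigma(N)$'s. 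That induction is correct and the equivariance bookkeeping you flag is harmless, but it is extra work: the paper avoids it entirely by shifting degrees \emph{after} invoking the trivial action, rather than before. What your approach buys is that it never separately invokes Lemma~\ref{homologylemma}; what the paper's approach buys is a two-line proof with no induction.
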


\begin{proof} Since $F$ is simple, the group $\Z/n$ will act trivially on $H_*(\tilde F;\R)$. It is well known that this implies 
	$$H_*(F;\R)\cong H_*(\tilde F;\R)$$
(See for example Proposition 3.G.1 in Hatcher's book \cite{Hatcher}). By using Lemma \ref{homologylemma} and the $2N$-connected map $\Sigma(N)\tilde F\to \Sigma \tilde F$ we get
	$$H_*(\Sigma(N)^k \tilde F;\R)\cong H_{*-k}(\tilde F;\R)\cong H_{*-k}(F;\R)\cong H_*(\Sigma_n^k F;\R)$$
up to degree $2N.$
\end{proof}

Now we are turning back to bundles. For a fiber bundle $F\into E\to B$ with local system $\F$ on $F$ inducing a finite cyclic $n$-fold holonomy covering, we get a fiberwise map $E\to B\times L_n^{2N}$ and can use this to define the fiberwise lens space suspension as the push-out
	$$\xymatrix{
			E\ar[r]\ar[d]	&	B\times L_n^{2N}\ar[d]\\
			B\times L_n^{2N}\ar[r]	&	\Sigma_{n,B} E.
		}
	$$
It is easy to see that $\Sigma_{n,B}E\to B$ is a bundle with fiber $\Sigma_n F$ and as before we get a local system $\Sigma \F$ on $\Sigma_{n,B}E.$ We have the following Lemma:

\begin{Lemma}\label{torsionofsuspension}
The bundle $\Sigma_{n,B}E$ is smoothable if $E$ was smoothabl and we have
	$$\tau^\delta(E,\F)=-\tau^\delta(\Sigma_{n,B}E,\Sigma\F).$$
\end{Lemma}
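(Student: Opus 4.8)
The plan is to realize $\Sigma_{n,B}E$, up to fiber homotopy, by a concrete smooth bundle obtained from a thickening of its defining homotopy push-out, and then to read off the identity from the additivity axiom together with the vanishing results for $\tau^\delta$ already proven.

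First I would construct the smooth model. Replacing $E$ by a smooth bundle in its fiber homotopy class (which changes neither side of the asserted identity, since $\tau^\delta$ and $\Sigma_{n,B}(-)$ are fiber homotopy functors), choose $N$ large and a fiberwise smooth embedding $\iota\colon E\into B\times L^{2N}_n\times\{0\}\subseteq\partial^v(B\times L^{2N}_n\times I)$ over $B$ whose underlying map $E\to B\times L^{2N}_n$ is the holonomy classifying map defining $\Sigma_{n,B}E$; such an $\iota$ exists because $\dim F\ll 2N$, and homotoping the classifying map does not alter $\Sigma_{n,B}E$. Let $\nu_+\into B\times L^{2N}_n\times I$ be a half tubular neighbourhood of $\iota(E)$, put $Z:=\overline{(B\times L^{2N}_n\times I)\setminus\nu_+}$, and let $D_E\subseteq\partial^v Z$ be the roof of $\nu_+$, a linear disk bundle over $E$. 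Gluing two copies $Z_1,Z_2$ of $Z$ along their roofs produces a smooth bundle over $B$ with vertical boundary on which $\Sigma\F$ induces a local system; since each $Z_i$ deformation retracts fiberwise onto its top face $B\times L^{2N}_n\times\{1\}$, and the inclusion $D_E\into Z_i$ corresponds under this retraction to $\iota$, the bundle $Z_1\cup_{D_E}Z_2$ is a smooth model of the homotopy push-out of $B\times L^{2N}_n\xleftarrow{\iota}E\xrightarrow{\iota}B\times L^{2N}_n$, that is, it is fiber homotopy equivalent to $\Sigma_{n,B}E$. In particular $\Sigma_{n,B}E$ is smoothable whenever $E$ is.

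Now I would compute. Applying additivity in the boundary case (Lemma \ref{additivity}) to $Z_1\cup_{D_E}Z_2$ with $\partial_0Z_1=\partial_0Z_2=D_E=Z_1\cap Z_2$ gives
$$\tau^\delta(\Sigma_{n,B}E,\Sigma\F)=\tau^\delta(Z_1,\Sigma\F)+\tau^\delta(Z_2,\Sigma\F)-\tau^\delta(D_E,\Sigma\F).$$
Because $D_E$ is a linear disk bundle over $E$ and $\Sigma\F|_{D_E}$ is pulled back from $\F$ on $E$, Lemma \ref{disklemma} yields $\tau^\delta(D_E,\Sigma\F)=\tau^\delta(E,\F)$. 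For the two halves, the point is that $Z_i$, viewed as a bundle pair whose $\partial_0$ is the top face $B\times L^{2N}_n\times\{1\}$, is fiberwise the trivial $h$-cobordism $L^{2N}_n\times I$ with a boundary finger removed, hence again an $h$-cobordism, and its holonomy is cyclic of order $n$ since $\pi_1 Z_i\cong\Z/n$ and $\Sigma\F|_{Z_i}$ is one of the systems $\F_\zeta$. By the vanishing of $\tau^\delta$ on $h$-cobordism bundle pairs with cyclic holonomy proven above, the relative torsion $\tau^\delta(Z_i,\partial_0 Z_i,\Sigma\F)$ vanishes, so $\tau^\delta(Z_i,\Sigma\F)=\tau^\delta(B\times L^{2N}_n\times\{1\},\Sigma\F)=0$, the last equality by naturality (twisted torsion vanishes on trivial product bundles, the local system here being pulled back from the fiber). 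Combining the three evaluations gives $\tau^\delta(\Sigma_{n,B}E,\Sigma\F)=-\tau^\delta(E,\F)$.

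The step I expect to be the main obstacle is the construction of the smooth model: one must produce it so that it is genuinely fiber homotopy equivalent to the homotopy push-out defining $\Sigma_{n,B}E$ despite the large gap between $\dim F$ and $\dim L^{2N}_n$ (which is precisely why a naive mapping cylinder will not do and the half tubular neighbourhood is needed), and one must verify that each half $Z_i$ really is an $h$-cobordism bundle pair carrying a cyclic holonomy, so that the earlier vanishing lemma applies and so that the roof $D_E$ is exactly the interface demanded by the additivity lemma. Once these geometric points are in place, the identity follows formally from additivity, the disk lemma, and the $h$-cobordism vanishing, with no further calculation.
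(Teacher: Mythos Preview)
Your argument is correct and in spirit identical to the paper's, but you do considerably more bookkeeping than the paper bothers with. The paper's proof is literally one line: ``additivity gives $\tau^\delta(\Sigma_{n,B}E,\Sigma\F)=\tau^\delta(B\times L_n^{2N}\cup_E B\times L_n^{2N},\Sigma\F)=-\tau^\delta(E,\F)$,'' using that $\tau^\delta$ vanishes on the trivial lens space bundles forming the two halves. Your construction of the smooth model $Z_1\cup_{D_E}Z_2$ via a half tubular neighbourhood is exactly what justifies that one line, since Lemma~\ref{additivity} only applies to honest boundary gluings and not to homotopy push-outs; the paper is tacitly relying on fiber homotopy invariance of $\tau^\delta$ (already proven in Section~6) to pass freely between the push-out and such a model. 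Where you invoke the $h$-cobordism vanishing lemma for the halves $Z_i$ and Lemma~\ref{disklemma} for the middle $D_E$, the paper simply collapses each piece to its fiber homotopy type ($B\times L_n^{2N}$ and $E$, respectively) and reads off the answer. So you have written out the rigorous version of what the paper sketches; nothing is wrong, and the extra machinery you invoke (h-cobordism vanishing, disk lemma) is available at this point and does the job, even if a reader who accepts ``additivity for homotopy push-outs of fiber homotopy invariants'' would not need it.

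One small point worth watching: for the smooth embedding $\iota\colon E\into B\times L_n^{2N}\times\{0\}$ and the subsequent $h$-cobordism argument you want the target to be a smooth manifold bundle, so you should take an odd-dimensional lens space $L_n^{2N-1}$ (a genuine closed manifold) rather than the $2N$-skeleton $L_n^{2N}$. This is harmless for the homotopy push-out defining $\Sigma_{n,B}E$ since everything is ``in low degrees,'' and the paper is equally casual about this.
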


\begin{proof} The bundle $\Sigma_{n,B}E$ is smoothable since it is the fiberwise push-out of smooth bundles along a smoothable bundle. Additivity gives
	$$\tau^\delta(\Sigma_{n,B}E,\Sigma\F)=\tau^\delta(B\times L^{2N}_n\cup_EB\times L_n^{2N},\Sigma \F)=-\tau^\delta(E,\F).$$
\end{proof}

\subsection{Reducing the Homology of the Fiber}

We now attempt to make the fiber of a bundle $F\into E\to B$ with a local system on $F$, simply connected base $B$, and simple fiber $F$ rationally homologically trivial without changing the difference torsion. For this we first need two Lemmas. In the following, let $N$ always be an arbitrarily large integer.

\begin{Lemma}\label{Lemma1}
Suppose $F\into E\to B$ is a  fibration with local system $\F$ on $F$ inducing a finite cyclic $n$-fold holonomy covering. Let $m\in \N$ denote the largest integer for which $\bar{H}_m(F;\R)\neq 0$. Suppose that we have $H_l(F;\R)\cong H_l(\tilde F;\R)$ for $0<l<m+\dim B$. Suppose further that $m$ is odd and let $\alpha$ be a map
	 $$\alpha:B\times L_n^{m}\to E$$
with the following properties: On each fiber we have $\alpha^*\F\cong \F_\zeta$ for some $n$-th root of unity $\zeta$ and $\alpha_*:\bar H_m(L_n^m;\R)\to \bar H_m(E;\R)$ is non-trivial. Then if we look at the bundle 
	$$E_1=E\cup_{B\times L_n^{m+k}} B\times L_n^{2N}$$
with fiber $F_1$ with local system $\F_1:=\F\cup_{\F_\zeta}\F_\zeta$ and corresponding covering $\tilde F_1\stackrel n\to F_1$, we have $\dim_\R H_*(F_1;\R)<\dim_\R H_*(F;\R)$ and $H_l(F_1;\R)\cong H_l(\tilde F_1;\R)$ for $0<l<m+\dim B.$
\end{Lemma}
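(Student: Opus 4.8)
\textit{Sketch of proof.} The plan is to work on the fiber: $E_1\to B$ has fiber the homotopy push-out $F_1 = F\cup_{L_n^m}L_n^{2N}$, where the two gluing maps are a fiber of $\alpha$ (replaced, if necessary, by the inclusion into its mapping cylinder, so as to be a cofibration) and the skeletal inclusion $L_n^m\into L_n^{2N}$, whence the push-out is a homotopy push-out and Mayer--Vietoris applies. Recall that $L_n^{2N}$ is rationally acyclic in all positive degrees $\le 2N$ (this is the input to Lemma \ref{homologylemma}), while $L_n^m$ — which, $m$ being odd, is the closed lens space $S^m/(\Z/n)$ — is rationally acyclic in positive degrees $<m$ and has $H_m(L_n^m;\R)\cong\R$. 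Plugging this into the Mayer--Vietoris sequence for $F_1=F\cup_{L_n^m}L_n^{2N}$, every term coming from $L_n^{2N}$ and every term coming from $L_n^m$ in degrees $<m$ vanishes, and one is left, in the range of degrees $<2N$, with isomorphisms $H_l(F_1;\R)\cong H_l(F;\R)$ for $0<l<m$, an exact sequence $0\to H_{m+1}(F_1;\R)\to H_m(L_n^m;\R)\stackrel{\alpha_*}{\to}H_m(F;\R)\to H_m(F_1;\R)\to 0$ in degrees $m$ and $m+1$, and $H_l(F_1;\R)=0$ for $m<l<2N$.

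Since $\alpha$ is fiberwise, a fiber of it factors through the fiber inclusion $F\into E$, so the hypothesis that $\alpha_*\colon\bar H_m(L_n^m;\R)\to\bar H_m(E;\R)$ is non-trivial forces $\alpha_*\colon H_m(L_n^m;\R)\cong\R\to H_m(F;\R)$ to be non-zero, hence injective. The exact sequence above then gives $H_{m+1}(F_1;\R)=0$ (as it must, $m$ being the top degree with $\bar H_m(F;\R)\neq0$) and $\dim_\R H_m(F_1;\R)=\dim_\R H_m(F;\R)-1$. Summing over all degrees, $\dim_\R H_*(F_1;\R)=\dim_\R H_*(F;\R)-1$, which is the first assertion.

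For the statement about the holonomy covers, argue as in Lemma \ref{coveringlemma}: the $n$-fold holonomy cover $\tilde F_1\to F_1$ is, up to homotopy in degrees $<2N$, the $\Z/n$-equivariant push-out $\tilde F\cup_{S^m}S^{2N}$, where $S^m\to L_n^m$ and $S^{2N}\to L_n^{2N}$ are the sphere covers and $\Z/n$ acts freely and orientation-preservingly on both spheres, hence trivially on $H_*(S^m;\R)$ and $H_*(S^{2N};\R)$. The elementary point I would use is that, over $\R$, transfer gives $H_l(F;\R)\cong H_l(\tilde F;\R)^{\Z/n}$, so the hypothesis $H_l(F;\R)\cong H_l(\tilde F;\R)$ is equivalent to $\Z/n$ acting trivially on $H_l(\tilde F;\R)$ — which is therefore the case for $0<l<m+\dim B$. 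Now $\Z/n$ acts trivially on each of $H_l(\tilde F;\R)$, $H_l(S^m;\R)$, $H_l(S^{2N};\R)$ in that range; feeding this into the $\Z/n$-equivariant Mayer--Vietoris sequence of $\tilde F_1=\tilde F\cup_{S^m}S^{2N}$ and using that a finite group acting on a long exact sequence of $\R$-vector spaces, trivially on the two outer families of terms, must act trivially on the middle family, one concludes that $\Z/n$ acts trivially on $H_l(\tilde F_1;\R)$ for $0<l<m+\dim B$. Reading the transfer isomorphism for $F_1$ backwards gives $H_l(F_1;\R)\cong H_l(\tilde F_1;\R)$ in that range, as claimed.

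The part that requires genuine care — the main obstacle — is the bookkeeping for the covers: one must verify that $\F_1=\F\cup_{\F_\zeta}\F_\zeta$ really does induce an $n$-fold cyclic holonomy cover and that this cover is, in the relevant range, the equivariant push-out $\tilde F\cup_{S^m}S^{2N}$. This is where the choice of a primitive $n$-th root of unity $\zeta$ and the compatibility of the covers restricted over $F$, $L_n^m$ and $L_n^{2N}$ enter, and it is dealt with exactly as in Lemma \ref{coveringlemma}. Everything else is routine, provided $N$ is taken large enough that $m+\dim B<2N$, so that the spheres and lens spaces contribute nothing to homology in the range $0<l<m+\dim B$.
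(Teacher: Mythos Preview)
Your argument is correct. The first half---the Mayer--Vietoris computation for $F_1$ and the use of injectivity of $\alpha_*$ to drop the dimension by one---is exactly what the paper does, only written out in more detail.

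For the second half the paper takes a slightly different route: rather than arguing via the $\Z/n$-action, it runs the Mayer--Vietoris sequence for $\tilde F_1=\tilde F\cup_{S^m}S^{2N}$ in parallel with that for $F_1$, observes that the lifted map $\tilde\alpha:S^m\to\tilde F$ is also non-trivial on $H_m$, and concludes directly that $H_m(\tilde F_1;\R)\cong H_m(\tilde F;\R)/\R$ while all other groups are unchanged; the desired isomorphism then follows by comparing the two computations degree by degree. Your approach---reformulating the hypothesis $H_l(F;\R)\cong H_l(\tilde F;\R)$ as triviality of the $\Z/n$-action via transfer, then propagating triviality through the equivariant Mayer--Vietoris sequence using semisimplicity---is a clean alternative that avoids having to check separately that $\tilde\alpha_*$ is injective. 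Both arguments rest on the same identification $\tilde F_1\simeq\tilde F\cup_{S^m}S^{2N}$ from Lemma~\ref{coveringlemma}, and both are short; yours is perhaps more conceptual, the paper's more hands-on.
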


\begin{proof}
Assume that we have a map $\alpha:B\times L_n^{m}\to E$ such that the induced map
	$$\alpha_*:\bar{H}_{m}(L_n^{m+k};\R)\to \bar{H}_{m}(\Sigma_n^k F;\R)$$
is non-trivial. Then the homology of the fiber $F_1$ will be given by the Mayer Vietoris sequence as 
	$$H_{m}(L_N^{m};\R)\stackrel {\alpha_*}\to H_{m}( F;\R)\oplus 0\to H_{m}(F_1;\R)\to 0$$
and thereby we have $\dim_\R H_*(F_;\R)<\dim_\R H_*(F;\R).$\\
To show that this $F_1$ will satisfy the second property, we look at the covering of $\alpha$
	$$\tilde\alpha: B\times S^{m}\to \tilde E,$$
which will also be non-trivial on homology. We get the following cubic diagram, where the back face is a push-out covering a push-out, forming the front face:
	$$\xymatrix{
			&	B\times S^{m}\ar[dl]_n\ar[rr]^{\tilde\alpha}\ar[dd]|!{[dl];[dr]}\hole	&&\tilde E\ar[ld]_n\ar[dd]\\
		B\times L_n^{m}\ar[rr]^\alpha\ar[dd]	&&	E\ar[dd]\\
			&	B\times S^{2N}\ar[rr]|!{[dl];[dr]}\hole\ar[dl]_n	&&	\tilde E_1\ar[dl]_n\\
		B\times L_n^{2N}\ar[rr]	&&	E_1.
		}
	$$
Since we have $H_{l}(F;\R)\cong H_{l}(\tilde F;\R)$ for $0<l<m+\dim B$ by assumption, and the only non-vanishing real homology group of the source is in the odd degree $m$, the homology up to degree $2N$ of $F_1$ and $\tilde F_1$ will be copies of the homology of $F$ and $\tilde F$ in any low degree except for $m.$ In degree $m$ we have 
	$$H_{m}(F_1;\R)\cong H_{m}(F;\R)/\im(\alpha)\cong H_{m}(F;\R)/\R$$
and 
	$$H_{m}(\tilde F_1;\R)\cong H_{m+k}(\tilde F;\R)/\im(\tilde\alpha)\cong H_{m}(\tilde F;\R)/\R.$$
Therefore we get
	$$H_{l}(F_1;\R)\cong H_{l}(\tilde F_1;\R)$$
for $0<l<m+\dim B.$\\
\end{proof}

\begin{Lemma}\label{Lemma2} Suppose $F\into E\to B$ is a  fibration with simply connected base $B$ and local system $\F$ on $F$ inducing a finite cyclic $n$-fold holonomy covering. As before  let $m\in \N$ denote the largest integer for which $\bar{H}_m(F;\R)\neq 0$ and suppose that we have $H_l(F;\R)\cong H_l(\tilde F;\R)$ for $0<l<m+\dim B$. Then there exists an integer $k\in \N$ and a map 
	$$\alpha:B\times L_n^{m+k}\to \Sigma_{n,B}^kE$$
such that $\alpha^*\Sigma^k\F\cong \F_\zeta$ for some $n$-th root of unity $\zeta$ and $\alpha_*:\bar H_{m+k}(L_n^{m+k};\R)\to \bar H_{m+k}(\Sigma_n^kF;\R)$ is non-trivial.
\end{Lemma}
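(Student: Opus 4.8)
Here is a plan for proving Lemma \ref{Lemma2}.

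The plan is to obtain $\alpha$ by correcting, on a single top cell, the obvious ``polar'' map into the iterated fibrewise lens space suspension. First I would choose $k$ sufficiently large (larger than $m+\dim B$) and of the parity making $m':=m+k$ odd, so that $L_n^{m'}$ is a closed orientable $m'$-manifold, namely the $m'$-skeleton of $L_n^\infty$ and hence a subcomplex of $L_n^{2N}$, with $\bar H_{m'}(L_n^{m'};\R)\cong\R$. Write $F':=\Sigma_n^kF$, $E':=\Sigma_{n,B}^kE$ and $\Sigma^k\F$ for the induced local system. Iterating Lemma \ref{coveringlemma}, the holonomy cover satisfies $\tilde F'\simeq\Sigma^k\tilde F$ in degrees below $2N$, so by the homology suspension isomorphism of Lemma \ref{homologylemma} the top nonvanishing reduced rational homology of $F'$ sits in degree $m'$ with $\bar H_{m'}(F';\R)\cong\bar H_m(F;\R)\neq0$. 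A van Kampen computation on the iterated pushout $F'=L_n^{2N}\cup_{\Sigma_n^{k-1}F}L_n^{2N}$, whose attaching maps classify $\Sigma^{k-1}\F$ and are therefore surjections onto $\Z/n$, yields $\pi_1F'\cong\Z/n$ and shows that each polar inclusion $L_n^{2N}\into F'$ induces the identity on $\pi_1$. Consequently the composite
	$$\alpha_0\colon B\times L_n^{m'}\into B\times L_n^{2N}\longrightarrow E'$$
(polar inclusion restricted to $L_n^{m'}$) is on each fibre a $\pi_1$-isomorphism, so $\alpha_0^*\Sigma^k\F\cong\F_\zeta$ for a primitive $n$-th root of unity $\zeta$; but $\alpha_0$ is trivial on $\bar H_{m'}$ of the fibre because it factors through $L_n^{2N}$, which is rationally acyclic below degree $2N$.

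Next I would produce the homotopy class with which to correct $\alpha_0$. Since $k>m$ we are in the stable range: $\pi_{m'}(F')\cong\pi_{m'}(\tilde F')\cong\pi_{m'}(\Sigma^k\tilde F)\cong\pi_m^S(\tilde F)$, and the rational Hurewicz homomorphism identifies $\pi_m^S(\tilde F)\otimes\R$ with $\bar H_m(\tilde F;\R)$, exactly as in the proof of the Lemma above computing $\pi_{m+k}(\Sigma_n^kF)\otimes\R$. The hypothesis $H_l(F;\R)\cong H_l(\tilde F;\R)$ for $0<l<m+\dim B$ forces the deck group $\Z/n$ to act trivially on $H_l(\tilde F;\R)$ throughout that range, because $H_l(F;\R)\cong H_l(\tilde F;\R)^{\Z/n}$ by the transfer and the dimensions agree; in particular $\bar H_m(\tilde F;\R)\cong\bar H_m(F;\R)\neq0$ with trivial action. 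So I may pick $\tilde\beta\in\pi_{m'}(\tilde F')$ with nonzero Hurewicz image in $H_{m'}(\tilde F';\R)$, and then naturality of the Hurewicz map together with the fact that $p_*\colon H_{m'}(\tilde F';\R)\to H_{m'}(F';\R)$ is an isomorphism (a connected real cover with trivial deck action on homology) shows that the class $v\in\pi_{m'}(F')=\pi_{m'}(\tilde F')$ determined by $\tilde\beta$ has $h(v)\neq0$ in $H_{m'}(F';\R)$.

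Finally I would correct $\alpha_0$ fibrewise on the top cell of $L_n^{m'}$. Writing $L_n^{m'}=L_n^{m'-1}\cup e^{m'}$ and retaining $\alpha_0$ on $B\times L_n^{m'-1}$, the extensions over $B\times e^{m'}$ rel boundary form, after fixing one, a section problem over $B$ for a bundle whose fibre has $\pi_j$ equal to $\pi_{m'+j}(F')$. Because $\pi_1B=0$, the only obstructions to realizing the constant difference $v$ lie in $H^{j+1}(B;\pi_{m'+j}(F'))$ for $1\le j\le\dim B-1$, and for such $j$ one has $\pi_{m'+j}(F')\cong\pi_{m+j}^S(\tilde F)$ with $\pi_{m+j}^S(\tilde F)\otimes\R\cong\bar H_{m+j}(\tilde F;\R)=0$, since $m<m+j<m+\dim B$ lies in the interval of the hypothesis and $m$ is the top nonvanishing degree. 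Hence these groups are finite and some nonzero multiple $N'v$ of $v$ is realized by a fibrewise correction. The resulting map $\alpha\colon B\times L_n^{m'}\to E'$ still satisfies $\alpha^*\Sigma^k\F\cong\F_\zeta$, as the correction is supported on a cell of dimension $m'\ge2$ and so does not alter $\pi_1$, and on each fibre $\alpha_*[L_n^{m'}]=(\alpha_0)_*[L_n^{m'}]+N'h(v)=N'h(v)\neq0$; that is, $\alpha_*\colon\bar H_{m'}(L_n^{m'};\R)\to\bar H_{m'}(\Sigma_n^kF;\R)$ is nontrivial, as required. (If $\dim B=0$ the base is a point and the same construction applies with the $B$-parametrization and the obstruction theory removed.)

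The step I expect to be the main obstacle is the fibrewise obstruction-theoretic correction, and precisely the bookkeeping of how large $k$ must be so that the higher homotopy groups of $\Sigma_n^kF$ governing the obstructions become rationally trivial. This is exactly where the hypothesis $H_l(F;\R)\cong H_l(\tilde F;\R)$ is used on the whole interval $0<l<m+\dim B$ rather than at $l=m$ alone — it both places us in the stable range and annihilates $\bar H_{m+j}(\tilde F;\R)$ for $1\le j<\dim B$ — and it is the reason the construction may only realize a nonzero multiple of $v$, which nonetheless suffices.
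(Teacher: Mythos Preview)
Your argument is correct and shares the paper's core strategy: start from the polar inclusion $L_n^{m+k}\hookrightarrow\Sigma_n^kF$, modify it by a rationally nontrivial homotopy class, and extend over $B$ by obstruction theory using that the relevant homotopy groups $\pi_{m+k+j}(\Sigma_n^kF)$ are rationally trivial for $1\le j<\dim B$. The organization differs in two places. First, for the fibre map the paper works upstairs on the sphere cover, equivariantly gluing $n$ translated copies of a representative $\tilde\alpha_1$ into small disjoint disks in $S^{m+k}$ and then descending; you instead work directly on $L_n^{m+k}$, altering only the top cell by a class $v\in\pi_{m'}(F')$ whose Hurewicz image is nonzero. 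These produce the same kind of map, and your verification that $p_*$ is a rational homology isomorphism (via triviality of the deck action, forced by the dimension hypothesis $H_l(F;\R)\cong H_l(\tilde F;\R)$) replaces the paper's explicit count $[\tilde\alpha]=n[\tilde\alpha_1]$. Second, to extend over $B$ the paper runs a double induction, filtering simultaneously by skeleta of $B$ and of $L_n^{m+k}$ via the cofibrations $L_n^i\hookrightarrow L_n^{i+2}\to M(\Z/n,i)$; because you keep $\alpha_0$ fixed on $B\times L_n^{m'-1}$ and perturb only on the single top cell, your section problem lives in a bundle with fibre $\Omega^{m'}F'$ and needs only the single induction over skeleta of $B$. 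This buys you a shorter argument at the cost of possibly realizing only a multiple $N'v$ rather than $v$ itself, which as you note is harmless. Both approaches ultimately defer the ``finite obstruction groups suffice'' step to the same standard rational argument (the paper cites \cite{Igusa1} for it).
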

\begin{proof} Note that in the following, $m$ and $n$ are fixed, already determined integers, whereas $k$ is an arbitrarily large integer bounded by the arbitrarily large integer $N$. Furthermore $m+k$ must be odd, such that $L^{m+k}_n$ has a non-vanishing rational homology group in degree $m+k$, but we can choose $k$ in such a way that this is satisfied.\\
Such a map $\alpha$ will correspond to a section $s$ of the bundle 
	$$\Map(L_n^{m+k}, F)\into \Map(B\times L_n^{m+k},\Sigma_{n,B}^k E)\to B,$$
which is a homologically non-trivial map in each fiber. In this context $\Map(B\times L_n^{m+k},\Sigma_{n,b}^E)$ will always denote the space of fiberwise maps between $B\times L_n^{m+k}$ and $\Sigma_{n,B}^kE$. We will construct this section using obstruction theory. Let $B_l$ denote the $l$-skeleton of $B$. Firstly, we will give $s_1:B_1\to \Map(B\times L_n^{m+k},\Sigma_{n,B}^k E)$. By the choice of $m$ we have a non-zero element:
	$$\tilde\gamma\in \bar H_{m+k}(\Sigma(N)^k \tilde F;\R)\cong\bar{H}_{m+k}(\Sigma_n^k  F;\R)\cong \bar{H}_{m}(F;\R).$$
Since the reduced homology is isomorphic to rationalized stabilized homotopy, we can view $\tilde\gamma$ as element of  $\pi_{m+k}(\Sigma(N)^k\tilde F)\otimes \R$, if $k$ is large enough.  Now choose a representative $\tilde\alpha_1: S^{m+k}\to \Sigma(N)^k \tilde F$ of $\tilde{\gamma}$. The map $\tilde \alpha_1$ will clearly be non-trivial on homology.\\ 
Our goal is now to modify $\tilde\alpha_1$ to $ \tilde\alpha: S^{m+k}\to \Sigma(N)^k\tilde F$ such that it covers an $\alpha:L_n^{m+k}\to \Sigma_n^kF$. Since $H_{m+k}(\Sigma(N)^k \tilde F;\R)\cong H_{m+k}(\Sigma_n^k F;\R)$, $\alpha$ will be non-trivial on homology. Furthermore the covering will ensure that $\alpha^*\F\cong \F_\zeta$ for some $n$-th root of unity $\zeta$. To begin with, we have from the last lens space suspension an inclusion
	$$i:L_n^{m+k}\into \Sigma_n^k F$$
trivial on homology. This will be covered by a homologically trivial equivariant inclusion
	$$\tilde i:S^{m+k}\into \tilde {\Sigma_n^k F}.$$
The idea now is to take a small disk $D^{m+k}$ in $S^{m+k}\subseteq \tilde{\Sigma_n^k F}$ and connect it to the image $\tilde \alpha_1(S^{m+k}).$ Then we can map $S^{m+k}$ to this new image instead and this map will be non-trivial on homology because $\tilde \alpha_1$ is non-trivial on homology. To make it equivariant we do the same construction equivariantly to every disk $p^i D^{m+k}$ in the orbit of $D^{m+k}$ under the $\Z/n$ action on $S^{m+k}$. Hereby $p\in \Z/n$ denotes a generator. This is illustrated in figure 1.
	
\begin{figure}
\label{modify}

\begin{picture}(200,200)
\setlength{\unitlength}{1cm}
\put(0,0){\includegraphics[width=1.00\textwidth]{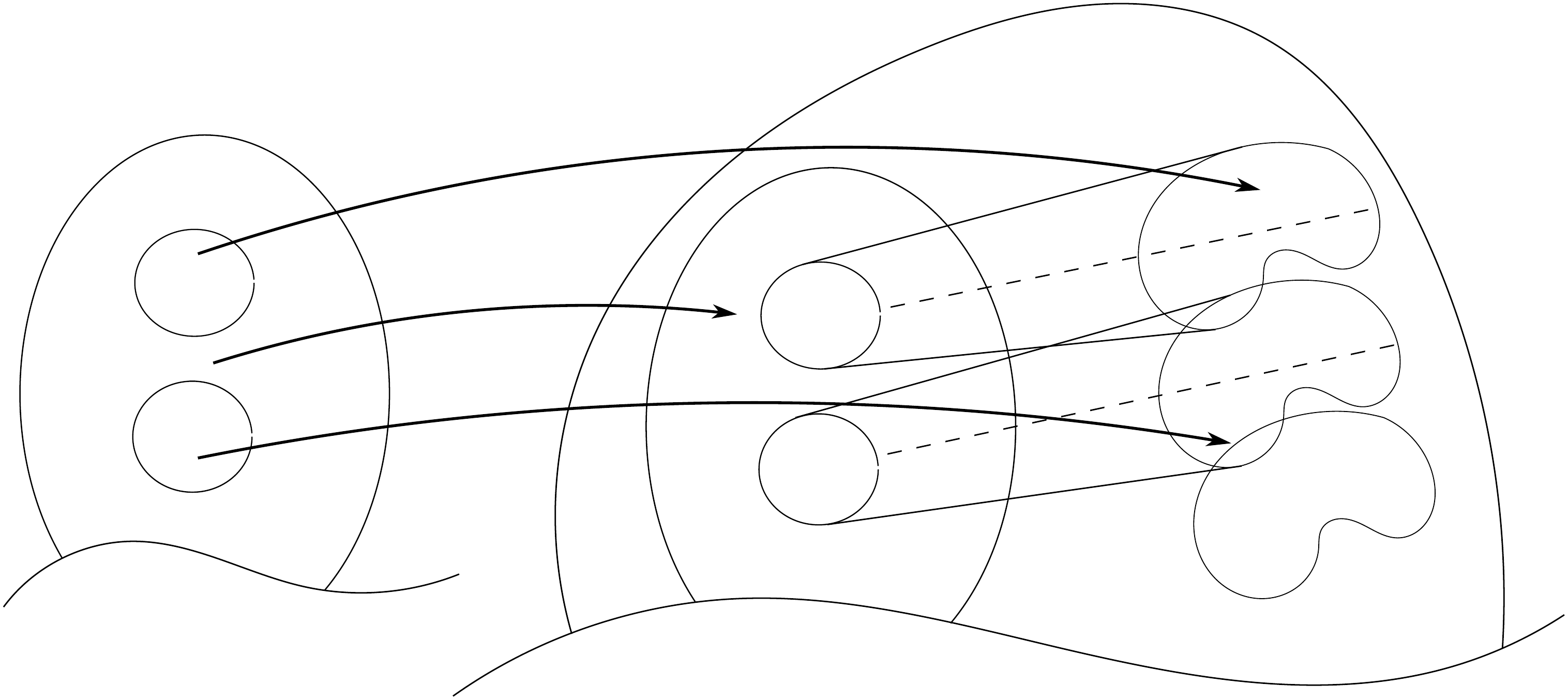}}
\put(4.5,4.2){$\tilde i$}
\put(0.55,3.2){$S^{m+k}$}
\put(0.4,5){\line(3,-2){1.4}}
\put(0.1,5.2){$D^{m+k}$}
\put(1,1){\line(1,2){0.8}}
\put(0.65,0.6){$pD^{m+k}$}
\put(10.3,6.1){$\tilde{\Sigma_n^kF}$}
\put(13.2,5){\line(3,1){1.3}}
\put(13.2,3.8){\line(5,1){1.8}}
\put(14.6,5.4){$\tilde\alpha_1(S^{m+k})$}
\put(15.1,4.2){$p\tilde\alpha_1(S^{m+k})$}
\end{picture}
\caption{Modifying the inclusion $\tilde i:S^{m+k}\into \tilde{\Sigma_n^kF}.$}
\end{figure}	
\noindent The formal construction is the following: Choose a small disk $D^{m+k}\subseteq S^{m+k}$. By doing this in a slightly bigger disk, we can modify the inclusion such that it factorizes 
	$$D^{m+k}\to *\into \tilde{\Sigma_n^kF}.$$
Using $D^{m+k}/\partial D^{m+k}\simeq S^{m+k}$, we can glue in $\tilde \alpha_1$ and modify the inclusion again so that it factorizes
	$$D^{m+k}\stackrel {\tilde\alpha_1}\to \tilde{\Sigma_n^k F}.$$
Now let $p\in \Z/n$ be a generator. If we make $D^{m+k}$ small enough, it will not intersect with any of the $p^iD^{m+k}\subseteq S^{m+k}$ for $0<i<n$. Doing the same construction to every $p^i D^{m+k}$ using $p^i\tilde \alpha_1,$ we can modify the inclusion to a map
	$$\tilde \alpha:S^{m+k}\to \tilde{\Sigma_n^kF},$$
which will clearly be  $n$-equivariant and thus cover a map
	$$\alpha: L_n^{m+k}\to \Sigma_n F.$$
The corresponding rationalized homotopy class of $\tilde \alpha$ in $\pi_{m+k}(\Sigma_n^k F)\otimes \R$ will be given by
	$$[\tilde \alpha]=[\tilde \alpha_1]+p[\tilde\alpha_1]+\ldots+p^{n-1}[\tilde\alpha_1]=n[\tilde\alpha_1]\neq 0,$$
since $\pi_1 F$ acts trivially on 

\begin{eqnarray*}
	\pi_{m+k}\tilde{\Sigma_n^kF}\otimes\R	&	\cong	&	\pi_{m+k}(\Sigma(N)^k\tilde F)\otimes\R\cong H_{m+k}(\Sigma(N)^k\tilde F; \R)\\
				&	\cong	&	H_m(\tilde F; \R)\cong H_m(F;\R)
\end{eqnarray*}
(otherwise the map $H^m(F;\R)\into H^m(\tilde F;\R)$ would not be an isomorphism and thereby $H_m(F;\R)$ would not be isomorphic to $H_m(\tilde F;\R)$ either). So $\alpha$ will be non-trivial in rational homology.\\
With this we can define $s_0:B_0\simeq *\to \Map(B\times L_n^{m+k},\Sigma_{n,B}^k E)$ non-trivial in the homology of the fiber. Since $B$ is simply connected, this section, defined over a point of $B$, can be extended to a section $s_1:B_1\to\Map(B\times L_n^{m+k},\Sigma_{n,B}^k E)$.\\
Let us now continue inductively. Suppose we already have a section $s_l:B_l\to \Map(B\times L_n^{m+k},\Sigma_{n,B}^k E)$ with $1\leq l <\dim B$. By restriction, we will get sections
	$$s_{l,i}:B_l\to \Map(B\times L_n^{i},\Sigma_{n,B}^kE).$$
Let us first extend $s_{l,1}$ to $s_{l+1,1}:B_{l+1}\to \Map(B\times L_n^{1},\Sigma_n^kE)$: This depends on the obstruction class
	$$\theta(s_{l},1)\in H^{l+1}(B,B_l;\pi_l(\Map(L_n^1,\Sigma_n^kF)))\cong H^{l+1}(B,B_l;\pi_{l+1}(\Sigma_n^kF)),$$
because $L_n^1\simeq S^1$. So $\theta(s_{l,1})$ is rationally trivial, if $k$ is large enough (larger than $l+1$). This is enough to extend $s_{l,1}$ as, for example, K. Igusa showed in the non-twisted version of this Lemma in \cite{Igusa1}.\\
We now want to extend $s_{l+1,1}$ to $s_{l+1,2}$ relative to $s_{l,2}$. For this we look at the cofibration sequence
	$$L_n^1\into L_n^2\to S^2,$$
which gives us the fibration sequence
	$$\Omega^2(\Sigma_n^kF)\into \Map(B\times L^2_n,\Sigma_{n,B}^k E)\to \Map(B\times L^1_n,\Sigma_{n,B}^k E).$$
From this we get the following commutative diagram:
	$$\xymatrix
		{
				&																		\Omega^2(\Sigma_n^kF)\ar@{^{(}->}[d]\\
			B_l\ar@{^{(}->}[d]\ar[r]^(.25){s_{l,2}}	&	\Map(B\times L_n^2,\Sigma_{n,B}^kE)\ar[d]\\
			B_ {l+1}\ar@{.>}[ur]^(.4){s_{l+1,2}}\ar[r]^(.25){s_{l+1,1}}	&	\Map(B\times L_n^1,\Sigma_{n,B}^kE),
		}
	$$
where the right column is a fibration sequence. Consequently the extension from $s_{l+1,1}$ to $s_{l+1,2}$ depends on the obstruction class
	$$\theta(s_{l,1})\in H^{l+1}(B, B_l; \pi_{l}(\Omega^2(\Sigma^k_n F)))\cong H^{l+1}(B, B_l; \pi_{l+2}(\Sigma^k_n F)),$$
which is, again, rationally trivial for large $k$.\\
Now assume that we have already constructed $s_{l+1,i}$ with $i\in \N$ even. Next, look at the cofibration
	$$L_n^i\into L_n^{i+2}\to M(\Z_n,i),$$
where
	$$M(\Z_n,i):=cof(S^{i}\stackrel n \to S^{i})$$
is the Moore space. Directly from the definition of the Moore space, we get that $\pi_l(\Map(M(\Z_n, i),X))$ is finite for any space $X$. Using the fibration
	$$\Map(M(\Z_n,i),\Sigma_n^kF)\into \Map(B\times L^{i+2}_n,\Sigma_{n,B}^k E)\to \Map(B\times L^i_{n,B}\Sigma_n^k E),$$
the commutative diagram
	$$\xymatrix
		{
				&																		\Map(M(\Z_n,i),\Sigma_n^kF)\ar@{^{(}->}[d]\\
			B_l\ar@{^{(}->}[d]\ar[r]^(.25){s_{l,i+2}}	&	\Map(B\times L_n^{i+2},\Sigma_{n,B}^kE)\ar[d]\\
			B_ {l+1}\ar@{.>}[ur]^(.4){s_{l+1,i+2}}\ar[r]^(.25){s_{l+1,i}}	&	\Map(B\times L_n^i,\Sigma_{n,B}^kE),
		}
	$$
tells us that extending $s_{l+1,i}$ to $s_{l+1, i+2}$ depends on the obstruction class
	$$\theta(s_{l+1,i})\in H^{l+1}(B,B_l;\pi_l(\Map(M(\Z_n, i),\Sigma_n^kF))),$$
which is rationally trivial.\\
Using this inductively, we get $s_{l+1,k+m-1}$. To extend this to $s_{l+1,k+m}=s_{l+1}$, we use again the cofibration sequence
	$$L_n^{k+m-1}\into L_N^{k+m}\to S^{k+m}$$
and the induced fibration sequence
	$$\Omega^{k+m}(\Sigma_n^kF)\into \Map(B\times L^{k+m}_n,\Sigma_{n,B}^k E)\to \Map(B\times L^{k+m-1}_n,\Sigma_{n,B}^k E)$$
and the commutative diagram
	$$\xymatrix
		{
				&	&																	\Omega^{k+m}(\Sigma_n^kF)\ar@{^{(}->}[d]\\
			B_l\ar@{^{(}->}[d]\ar[rr]^(.25){s_{l,k+m}}	&	&\Map(B\times L_n^{k+m},\Sigma_{n,B}^kE)\ar[d]\\
			B_ {l+1}\ar@{.>}[urr]^(.4){s_{l+1,k+m}}\ar[rr]^(.3){s_{l+1,k+m-1}}	&	&\Map(B\times L_n^{k+m-1},\Sigma_{n,B}^kE),
		}
	$$
making the obstruction class
	$$\theta(s_{l+1,k+m-1})\in H^{l+1}(B,B_l;\pi_{k+m+l}(\Sigma_n^kF)).$$
However, if $k$ is large enough, we have
	\begin{eqnarray*}
		\pi_{k+m+l}(\Sigma_n^kF)\otimes \R	&	\cong	& \pi_{k+m+l}(\Sigma(N)^k\tilde F)\otimes \R\\
																				&	\cong &	\bar H_{k+m+l}(\Sigma(N)^k\tilde F;\R)\\
																				&	\cong &	H_{k+m+l}(\Sigma_n^k F;\R)\\
																				&	\cong	&	H_{m+l}(F;\R)\cong 0
	\end{eqnarray*}
by assumption because $m+l<m+\dim B$. This guarantees that we can extend $s_{l+1,k+m-1}$ to $s_{l+1}$ and completes the proof.
\end{proof}

\begin{Lemma} Let $F\into E\to B$ be a  fibration with simply connected base $B$ and local system $\F$ on $F$ inducing a finite cyclic $n$-fold holonomy covering. Suppose further that $F$ is simple. Then there exists a bundle $F'\into E'\to B$ with local coefficient system $\F'$ on $F',$ where $F'$ is rationally homologically trivial such that
	$$\tau^\delta(E,\F)=\pm \tau^\delta(E',\F').$$
\end{Lemma}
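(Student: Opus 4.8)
\emph{Proof idea.} The plan is to induct on the total real Betti number $\dim_\R H_*(F;\R)$ of the fiber, at each stage stripping off the top nonvanishing rational homology group of the fiber by combining Lemmas~\ref{Lemma1} and~\ref{Lemma2}, and keeping track of the difference torsion by means of Lemma~\ref{torsionofsuspension} and the additivity of $\tau^\delta$. To start, I would observe that since $F$ is simple, the Proposition preceding Corollary~\ref{simplefiber} gives that $\Z/n$ acts trivially on $H_*(\tilde F;\R)$, hence $H_l(F;\R)\cong H_l(\tilde F;\R)$ for all $l$ (Proposition~3.G.1 in \cite{Hatcher}, as used in Corollary~\ref{simplefiber}); in particular the homology hypothesis required by both key Lemmas holds. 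If $\bar H_*(F;\R)=0$ there is nothing to prove, so let $m$ be the largest degree with $\bar H_m(F;\R)\neq 0$.

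For the inductive step I would first pass to a high iterated fiberwise lens space suspension: choose $k$ large (bigger than $\dim B$ and large enough for Lemma~\ref{Lemma2}) with $m+k$ odd and replace $E$ by $\Sigma_{n,B}^k E$. By Lemma~\ref{torsionofsuspension} this multiplies $\tau^\delta$ by $(-1)^k$, the suspended fiber $\Sigma_n^k F$ has the same total Betti number as $F$, its top reduced real homology sits in the odd degree $m+k$ and is isomorphic to $H_m(F;\R)$, and it still satisfies the homology hypothesis (this condition is equivalent, over $\R$, to triviality of the $\Z/n$-action on the relevant $H_l(\tilde F;\R)$, which is preserved under suspension by the Mayer--Vietoris argument of Corollary~\ref{simplefiber}). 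Lemma~\ref{Lemma2} then supplies a fiberwise map $\alpha\colon B\times L_n^{m+k}\to\Sigma_{n,B}^k E$ with $\alpha^*\Sigma^k\F\cong\F_\zeta$ for some root of unity $\zeta$ and $\alpha_*$ nontrivial on $\bar H_{m+k}$ of the fiber, and Lemma~\ref{Lemma1} (with top degree $m+k$) then produces the bundle
$$E_1:=\Sigma_{n,B}^k E\cup_{B\times L_n^{m+k}}B\times L_n^{2N}$$
with fiber $F_1$ and local system $\F_1$, where $\dim_\R H_*(F_1;\R)<\dim_\R H_*(F;\R)$ and $F_1$ still satisfies the homology hypothesis.

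Next I would compute $\tau^\delta(E_1,\F_1)$. The bundle $E_1$ is a fiberwise homotopy pushout of smoothable bundles along a smoothable subbundle, hence smoothable, so $\tau^\delta$ is defined on it by fiber homotopy invariance. After crossing everything with a large disk $D^M$ and replacing $\alpha$ by a codimension-zero embedding of a linear disk bundle over $B\times L_n^{m+k}$ (legitimate by the stability theorem and fiber homotopy invariance of $\tau^\delta$), $E_1$ is fiber homotopy equivalent to a genuine union, along a common part of their vertical boundaries, of a thickening of $\Sigma_{n,B}^k E$ and a thickening of the product bundle $B\times L_n^{2N}$, meeting in a thickening of the lens space bundle $B\times L_n^{m+k}$. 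The additivity Lemma~\ref{additivity}, valid for the twisted torsion invariant $\tau^\delta$, then yields
$$\tau^\delta(E_1,\F_1)=\tau^\delta(\Sigma_{n,B}^k E,\Sigma^k\F)+\tau^\delta(B\times L_n^{2N},\,\cdot\,)-\tau^\delta(B\times L_n^{m+k},\,\cdot\,).$$
The middle term vanishes since $B\times L_n^{2N}$ is a trivial bundle over the simply connected base $B$ (the Remark following the Naturality axiom, which applies to $\tau^\delta$), and the last term vanishes because $B\times L_n^{m+k}$ is a linear odd-dimensional lens space bundle, on which $\tau^\delta$ is zero by the results of the Lens Space Bundles subsection. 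Hence $\tau^\delta(E_1,\F_1)=(-1)^k\tau^\delta(E,\F)=\pm\tau^\delta(E,\F)$, and iterating finitely many times produces a bundle $F'\into E'\to B$ with $\bar H_*(F';\R)=0$ and $\tau^\delta(E',\F')=\pm\tau^\delta(E,\F)$, as claimed.

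The hard part will be the bookkeeping in the third paragraph: turning the homotopy pushout $E_1$ into an honest union of manifold bundles along their vertical boundaries so that Lemma~\ref{additivity} literally applies, and simultaneously arranging the various ``large'' parameters ($N$, $k$, $M$) consistently. Everything else is a direct concatenation of Lemmas~\ref{torsionofsuspension}, \ref{Lemma2}, \ref{Lemma1} together with the already-established vanishing of $\tau^\delta$ on lens space, disk, and sphere bundles and on products over simply connected bases.
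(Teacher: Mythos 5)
Your proposal follows essentially the same route as the paper's proof: strip the top rational homology of the fiber one step at a time by suspending to make the top degree odd and large, using Lemma \ref{Lemma2} to produce the map $\alpha$, Lemma \ref{Lemma1} to build $E_1$ with strictly smaller total Betti number while preserving the $H_l(F)\cong H_l(\tilde F)$ hypothesis, and the combination of additivity, vanishing of $\tau^\delta$ on trivial and odd-dimensional lens space bundles, and Lemma \ref{torsionofsuspension} to get $\tau^\delta(E_1,\F_1)=(-1)^k\tau^\delta(E,\F)$; then iterate. Your third paragraph spells out the three-term additivity decomposition and the smoothability bookkeeping more explicitly than the paper does, but the lemmas invoked and the overall induction are identical.
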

\begin{proof} Let again $m$ be the largest integer such that $H_m(F;\R)$ is non-trivial. Since $F$ is simple we get by Corollary \ref{simplefiber} $H_*(F;\R)\cong H_*(\tilde F;\R)$ and we can use Lemma \ref{Lemma2} to get a bundle map
	$$\alpha:B\times L_n^{m+k}\to \Sigma_{n,B}^kE$$
for a integer $k$ non-trivial on the $m+k$-th homology. By Lemma \ref{coveringlemma} the $n$-fold covering of $\Sigma_n^kF$ is given in low degrees by $\Sigma(N)^k\tilde F$. Since both $\Sigma_n^k$ and $\Sigma(N)^k$ only shift rational homology up by $k$ degrees we have 
	$$H_l(\Sigma_n^kF;\R)\cong H_l(\Sigma(N)^kF;R)\cong H_l(\tilde{\Sigma_n^kF};\R)$$
for all $0<l<m+k+\dim B.$ Furthermore the highest non-trivial homology group of $\Sigma_n^k F$ is in degree $m+k$ and we also have 
	$$\dim_\R H_*(\Sigma_n^kF;\R)=\dim_\R H_*(F;\R).$$
Now we can apply the construction of Lemma \ref{Lemma1} to get a bundle $F_1\into E_1\to B$ such that
	$$\dim_\R H_*(F_1,\R)<\dim_\R H_*(\Sigma_n^kF;\R)=\dim_\R H_*(F;\R).$$
By definition of $E_1$, and since the torsion of trivial bundles is zero, we get with additivity and Lemma \ref{torsionofsuspension}
	$$\tau^\delta(E_1,\F_1)=\tau^\delta(\Sigma_{n,B}^kE,\Sigma^k\F)=(-1)^k\tau^\delta(E,\F).$$
Since Lemma \ref{Lemma1} also guarantees that $H_l(F_1;\R)\cong H_l(\tilde F_1;\R)$ for $0<l<m+k+\dim B$ we now can repeat this process and decrease the dimension of the rational homology until we will get the bundle $F'\into E'\to B$ with local system $\F'$ on $F$ such that 
	$$\tau^\delta(E,\F)=\pm \tau^\delta(E',\F')$$
and $F'$ is rationally homologically trivial.
\end{proof}

\begin{Remark}
As a consequence of this Lemma it suffices to only determine $\tau^\delta$ on bundles with rationally trivial fiber. We will proof in the next section that the difference torsion will always be zero on these bundles and this will conclude the proof of the main theorem.
\end{Remark}

\subsection{Triviality on Fibers with Trivial Real Homology}

\begin{Lemma} We have $\tau^\delta(Z,\F)=0$ for any torsion invariant, smoothable bundle $X\into Z\to B$ with $\bar H_*(X;\R)=0,$ simply connected base $B$ and local system $\F$ inducing an $n$-fold holonomy covering.
\end{Lemma}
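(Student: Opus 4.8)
The plan is to embed the bundle fiberwise into a trivial disk bundle and to exploit that the complement of a tubular neighbourhood is simply connected, so that the twisted statement collapses onto K.~Igusa's vanishing of the non-twisted difference torsion. By the smoothability hypothesis and the fiber homotopy invariance of $\tau^\delta$ proved in Section 6.2, I would first replace $Z\to B$ by a smooth bundle whose fiber $X$ is a compact manifold, still rationally acyclic and still equipped with a local system (again written $\F$) inducing an $n$-fold cyclic holonomy covering. Then I would fix an integer $N\gg\dim X+\dim B$ with $N-\dim X$ even and choose a fibered smooth embedding of $Z$ into the fiberwise interior of $B\times D^N$. Let $q\colon D\to Z$ be a closed fibered tubular neighbourhood, a linear disk bundle over $Z$, with vertical sphere bundle $S:=\partial^v D\to Z$ (fibers $S^{N-\dim X-1}$), and let $W$ be the closure of $(B\times D^N)\setminus D$, so that $B\times D^N=D\cup W$, $D\cap W=S=\partial^v D$, and $\partial^v W=S\cup(B\times S^{N-1})$.

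Next I would apply the additivity axiom in the boundary case (Lemma~\ref{additivity}) to this decomposition. Using $\tau^\delta(B\times D^N,\1)=0$ (a trivial bundle) and $\tau^\delta(D,q^*\F)=\tau^\delta(Z,\F)$ (Lemma~\ref{disklemma}), this gives
$$\tau^\delta(Z,\F)=\tau^\delta(S,\F_{|S})-\tau^\delta(W,\F_{|W}).$$
The first summand vanishes by the boundary-case transfer formula: one has $\tau^\delta_B(S,\F_{|S})=\chi(S^{N-\dim X-1})\,\tau^\delta(Z,\F)+\tr_B^Z\big(\tau^\delta_Z(S,\F_{|S})\big)$, in which $\tau^\delta_Z(S,\F_{|S})=0$ because $S\to Z$ is a sphere bundle (Section 6.1) and $\chi(S^{N-\dim X-1})=0$ because $N-\dim X-1$ is odd.

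For the second summand, since $X$ has codimension at least $3$ in $D^N$, a general position argument shows that each fiber of $W\to B$ — that is, $D^N$ with an open neighbourhood of a copy of $X$ removed — is simply connected, so the restricted system $\F_{|W}$ is the trivial local system $\1$. Hence $\tau^\delta(W,\F_{|W})=\tau^\delta(W,\1)$ equals the non-twisted difference torsion $\tau(W,\1)-a\tau^{IK}(W,\1)-bM(W,\1)$ of the bundle pair $W\to B$ (the scalars $a$ and $b$ coincide with the non-twisted ones, as noted in the Remark after the Main Theorem), and this vanishes identically by Igusa's theorem (Section 3.2 and \cite{Igusa1}), which also applies to bundle pairs. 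Combining the two steps yields $\tau^\delta(Z,\F)=0$.

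The only genuinely geometric input, and the step I expect to be the main obstacle to make precise, is the assertion that the complement $W$ of the tubular neighbourhood is fiberwise simply connected: this is exactly what forces the restricted local system to be trivial and reduces the entire problem to the non-twisted vanishing theorem. The remaining ingredients — the boundary-case additivity and transfer formulas of Section 4, choosing $N-\dim X$ even so that the normal sphere is odd-dimensional, and bookkeeping the vertical-boundary (bundle pair) structure when $X$ has a boundary — are routine.
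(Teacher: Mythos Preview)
There is a genuine gap, and it occurs before the step you flag as the main obstacle. The additivity formula in the boundary case (Lemma~\ref{additivity}) requires a local system on the \emph{union} $B\times D^N$ whose restrictions to $D$, $W$, and $S$ are the systems appearing in your displayed identity. But the fibers of $B\times D^N$ are disks, hence simply connected, so the only local system on $B\times D^N$ is the trivial one. Its restriction to $D$ is $\1$, not $q^*\F$, and the identity you obtain from additivity computes $\tau^\delta(D,\1)=\tau^\delta(Z,\1)$ rather than $\tau^\delta(Z,\F)$. Equivalently: the system $q^*\F$ on $D$ and the trivial system on $W$ do \emph{not} glue along $S$, because $S\to Z$ is a sphere bundle with simply connected fiber (you chose $N-\dim X\geq 3$), so $\pi_1 S\cong\pi_1 X$ fiberwise and $(q^*\F)_{|S}$ is still nontrivial, whereas any system on the simply connected $W$ restricts trivially to $S$. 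The very simple connectivity of the complement that you rely on is what prevents the local system from extending across it.

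This is why the paper's argument is so much more elaborate: one cannot kill the local system by embedding into a trivial disk bundle. Instead the paper (after a lens space suspension to produce a section and hence a trivial sub-disk-bundle $D\cong B\times D^m$ inside $E$) passes to the classifying space $B\Diff_0(M\rel D)$, pulls $\tau^\delta$ back along a map $p$ from $B\Diff_{\ker\psi}(M\rel M_0)$ whose universal bundle deformation retracts onto a \emph{trivial} subbundle (so $p^*\tau^\delta=0$ by fiber homotopy invariance), and then shows $p$ is rationally $2k$-connected via Cerf's fibration, immersion theory, and the rational acyclicity of $M$. The local system is carried along throughout because one works equivariantly with the $n$-fold cover; it is never discarded by an embedding into a simply connected ambient space.
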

\begin{proof}
By taking the fiberwise lens space suspension, we may assume that the map $Z\to B$ has a section. We choose a smooth bundle $E\to B$ fiber homotopy equivalent to $Z$. By embedding $E$ into $B\times \R^m$ for large $m$ and taking a tubular neighborhood, we may assume that the vertical tangent bundle of $E$ is trivial and that the fiber is a compact $m$-manifold $M$ with boundary $\partial M$ embedded in $\R^m$ so that $M\simeq X$. Also, the image of the section $B\to E$ will have a neighborhood, which is a trivial disk bundle $D\cong B\times D^m$. Let $\tilde M$ be the $n$-fold universal cover of $M$ (using lens space suspension we can ensure that $\pi_1M\cong \Z/n$). This gives raise to the fiberwise covering $\tilde E\stackrel{n}{\to} E$. The disk $D\subseteq M$ lifts to $n$ disks $\tilde D\subseteq \tilde M$. The bundle $\tilde E\to B$ is classified by a map $B\to B\Diff(\tilde M \rel \tilde D)$; but since it is a covering of an $M$-bundle, this map restricts to $f:B\to B\Diff_n(\tilde M\rel \tilde D)$, where $B\Diff_n(\tilde M\rel \tilde D)$ is the space of $n$-equivariant diffeomorphisms of $\tilde M$ which may permute the components of the lifted disk $\tilde D$. By construction $\tilde E$ will be the quotient of $F\times G$, where $F:=\Diff_n(\tilde M \rel \tilde D)$ and $G:=f^*E\Diff_n(\tilde M \rel \tilde D)$ under the diagonal $F$-action. Putting this together we get the following commutative diagram:
	$$\xymatrix{
					&	F\times G\ar[ld]\ar[ddd]\ar[rr]	&					&	F\times E\Diff_n(\tilde M \rel \tilde D)\ar[ld]\ar[ddd]\\
		\tilde E\ar[d]_n\ar[ddr]\ar[rr]|!{[ur];[ddr]}\hole	&&	\tilde E_{univ}\ar[d]_n\ar[ddr]\\
		E\ar[dr]\ar[rr]|!{[u];[dr]}\hole|!{[uur];[dr]}\hole	&		&	E_{univ}\ar[dr]\\
			&	B\ar[rr]	&&	B\Diff_n(\tilde M \rel \tilde D).
		}$$
$\tilde E_{univ}$ is hereby the quotient of $F\times E\Diff_n(\tilde M \rel \tilde D)$ under the diagonal action. This will admit an $n$-action, and $\tilde E\to \tilde E_{univ}$ will be $n$-equivariant. Therefore it covers the universal bundle $E_{univ}$ and $\tau^\delta(E,\rho)$ is the pull back of
	$$\tau^\delta:=\tau^\delta(E_{univ},\rho)\in H^{2k}(B\Diff_n(\tilde M \rel \tilde D);\R).$$
Thus it suffices to show that $\tau^\delta=0$.\\
Since $B$ is simply connected, we know that the classifying map will factorize 
	$$B\to B\Diff_{0,n}(\tilde M \rel \tilde D)\to B\Diff_{n}(\tilde M \rel \tilde D),$$
where $B\Diff_{0,n}(\tilde M \rel \tilde D)$ is the identity component. Ergo we consider
	$$\tau^\delta\in H^{2k}(B\Diff_{0,n}(\tilde M \rel \tilde D);\R).$$
Since the identity component will only contain maps that leave a certain base point fixed, we have
	$$B\Diff_{0,n}(\tilde M \rel \tilde D)\cong B\Diff_0(M \rel D).$$
Now choose $M_0$ to be $M$ without an open collar neighborhood of $\partial M$. We can choose $M_0$ in such a way that $D\subseteq M_0$. Naturally, we get a covering $\tilde M_0\stackrel n \to M_0$. Let  $\Diff_n(\tilde M\rel \tilde M_0)$ be the space of $n$-equivariant diffeomorphisms of $\tilde M$, which leave $\tilde M_0$ fixed (and do not permute any components thereof). For this we have
	$$\Diff_n(\tilde M\rel \tilde M_0)\subseteq \Diff_n(\tilde M\rel \tilde D)$$
and get a map 
	$$\psi:\pi_0\Diff_n(\tilde M\rel \tilde M_0)\to \pi_0 \Diff_n(\tilde M\rel \tilde D).$$
The kernel of $\psi$ will be the set of connected components of $\Diff_n(\tilde M\rel \tilde M_0)$, which map into the identity component $\Diff_{0,n}(\tilde M\rel \tilde D)$. Taking only these components, we get a space with inclusion
	$$\Diff_{\ker \psi,n}(\tilde M\rel \tilde M_0)\subseteq \Diff_{0,n}(\tilde M\rel\tilde D),$$
and from this we get a map
	$$p:B\Diff_{\ker \psi,n}(\tilde M\rel \tilde M_0)\to B\Diff_{0,n}(\tilde M\rel\tilde D).$$
The pull-back of $\tau^\delta$ along $p$ will be the torsion class of an $M$-bundle containing a trivial $M_0$-bundle as a fiberwise deformation retract. Since $\tau^\delta$ is a fiber homotopy invariant and trivial on trivial bundles, the pull-back $p^*\tau^\delta$ will be $0$.\\
Therefore it suffices to show that
	$$p^*:H^{2k}(B\Diff_{0,n}(\tilde M\rel\tilde D);\R)\to H^{2k}(B\Diff_{\ker \psi,n}(\tilde M\rel \tilde M_0);\R)$$
is injective. To do this, we will show that $p$ is rationally $2k$-connected: \\
Since the maps in $\Diff_{n}(\tilde M\rel \tilde M_0)$ fix a base point, we have 
	$$\Diff_{n}(\tilde M\rel \tilde M_0)\cong \Diff(M\rel M_0).$$
Now we can form the space $\Diff_{\ker\psi}(M\rel M_0)$ by taking the corresponding connected components such that $\Diff_{\ker \psi}(M\rel M_0)=\Diff_{\ker \psi, n} (\tilde M\rel \tilde M_0)$. We can view the map $p$ as
	$$p:B\Diff_{\ker\psi}(M\rel M_0)\to B\Diff_0(M\rel D).$$
We will show that this is rationally $2k$-connected. At first we will get the following sequence we want to show to be a fibration:
	$$\Diff_{\ker\psi}(M\rel M_0)\into \Diff_0(M\rel D)\stackrel \pi\to Emb_0(M_0,M\rel D),$$
where $Emb_0(M_0,M\rel D)$ is the identity component of the space of embeddings $M_0\into M$ fixing $D$. Here, $\pi$ is simply the restriction map. By the isotopy extension theorem, $\pi$ will be surjective. From this it follows from a theorem of Cerf (\cite{Cerf}, Appendix) that $\pi$ is a fibration with fiber being the preimage of any point. We easily see that $\pi^{-1}(id)\cong \Diff_{\ker \psi}(M\rel M_0),$ and therefore the sequence above is a fibration. Applying the $B$-functor, we get another fibration:
	$$Emb_0(M_0,M\rel D)\into B\Diff_{\ker\psi}(M\rel M_0)\stackrel p \to B_0(M\rel D).$$
So we just need to show that
	$$\pi_iEmb_0(M_0,M\rel D)\otimes \R\cong 0\quad \textnormal{for }0<i<2k.$$
When the dimension $m$ we embedded $M$ in is large, the homotopy dimension of $M_0$ will be much smaller than $m$. Therefore, by transversality, we have that the embedding space is homotopy equivalent in low degrees to the corresponding space of immersions
	$$\pi_i Imm_0(M_0,M\rel D)\cong \pi_i Emb_0(M_0,M\rel D).$$
By immersion theory, $Imm_0(M_0,M\rel D)$ is homotopy equivalent to the space of all pointed homotopy equivalences $M_0\to M$ and the space of all pointed maps $M_0\to O(m)$:
	$$\pi_i Imm_0(M_0,M\rel D)\cong \pi_ih-eq_*(M_0,M)\oplus \pi_i(\Map_*(M_0,O(m))).$$
Since $M_0\simeq M$ and the space of pointed homotopy equivalences is the identity component of the space of pointed maps, we have
	$$\pi_i h-eq_*(M_0,M)\cong \pi_i(\Map_*(M,M)).$$
The theorem will follow from the next Proposition.
\end{proof}

\begin{Proposition}
Let $M$ be a pointed space with $\bar H_*(M;\R)=0$. Then 
	$$\pi_i \Map_*(M,X)\otimes \R\cong 0$$
for any $i>0$ and any pointed finite CW complex $X$.
\end{Proposition}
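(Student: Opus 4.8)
The plan is to show that the based mapping space $\Map_*(M,X)$ is rationally weakly contractible by climbing the Postnikov tower of $X$, the base of the induction being the case of an Eilenberg--MacLane target. First I would record two harmless reductions. Since $\R$ is flat over $\Z$, the hypothesis $\bar H_*(M;\R)=0$ is equivalent to $\bar H_*(M;\Q)=0$, which by universal coefficients over the field $\Q$ also gives $\bar H^*(M;\Q)=0$. Secondly, the finite complexes $X$ we care about are nilpotent (in our situation $X$ is either the simple fibre $M$ itself or the Lie group $O(m)$) and $M$ is a finite complex, so I may rationalize in the target: $\Map_*(M,X)_\Q\simeq\Map_*(M,X_\Q)$ and $\pi_i\Map_*(M,X)\otimes\Q\cong\pi_i\bigl(\Map_*(M,X)_\Q\bigr)$ for $i>0$. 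Hence it suffices to prove $\Map_*(M,X_\Q)\simeq *$, where $X_\Q$ carries a principal Postnikov tower $\{(X_\Q)_n\}$ with fibres $K(V_n,n)$, $V_n:=\pi_n X\otimes\Q$ a $\Q$-vector space.

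The heart of the argument is the observation that
	$$\Map_*\bigl(M,K(V,n)\bigr)\simeq *$$
for every $\Q$-vector space $V$ and every $n$. Indeed $K(V,n)$ is a topological abelian group, so $\Map_*(M,K(V,n))$ is one too, its components are all equivalent, and for $i\ge 0$
	$$\pi_i\Map_*\bigl(M,K(V,n)\bigr)\cong[\,S^i\wedge M,\,K(V,n)\,]_*\cong\bar H^{n-i}(M;V).$$
Because $V$ is a $\Q$-module, the reduced cochain complex $C^*(M;V)$ equals $\operatorname{Hom}_\Q\bigl(C_*(M;\Q),V\bigr)$ and $\operatorname{Hom}_\Q(-,V)$ is exact, so $\bar H^{n-i}(M;V)\cong\operatorname{Hom}_\Q\bigl(\bar H_{n-i}(M;\Q),V\bigr)$; this vanishes for $i>0$ since $\bar H_{>0}(M;\Q)=0$, for $i=0$ since $M$ is connected, and trivially for $i>n$. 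So $\Map_*(M,K(V,n))$ is connected with no higher homotopy.

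Now I would induct up the tower. For the base, $(X_\Q)_1$ is a point (or $K(V_1,1)$ in the nilpotent case), killed by the previous paragraph. For the step, apply $\Map_*(M,-)$ — which preserves fibre sequences — to the principal fibration $K(V_{n+1},n+1)\to(X_\Q)_{n+1}\to(X_\Q)_n$; the resulting fibration has contractible fibre and, by induction, contractible base, so $\Map_*(M,(X_\Q)_{n+1})$ is contractible. Finally $\Map_*(M,X_\Q)\simeq\operatorname{holim}_n\Map_*(M,(X_\Q)_n)$, and the Milnor $\lim^1$ sequence shows the homotopy inverse limit of a tower of weakly contractible spaces is weakly contractible. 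Combined with the first paragraph this yields $\pi_i\Map_*(M,X)\otimes\R=0$ for all $i>0$.

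As for difficulty: the Eilenberg--MacLane computation, which is exactly where the $\Q$-acyclicity of $M$ is used, is short; the point that needs a little care is the rationalization bookkeeping — the interchange $\Map_*(M,X)_\Q\simeq\Map_*(M,X_\Q)$ and the nilpotence of the components of $\Map_*(M,X)$ out of the finite complex $M$ — which I would simply quote from Hilton--Mislin--Roitberg. One can alternatively avoid rationalizing the target and run the same induction on the ordinary Postnikov tower via the Federer spectral sequence $E_2^{s,t}=\bar H^s(M;\pi_t X)\Rightarrow\pi_{t-s}\Map_*(M,X)$, noting that the torsion of $\pi_t X$ contributes only torsion to $\bar H^s(M;\pi_t X)$ and its free part contributes copies of $\bar H^s(M;\Z)$, which is torsion in positive degrees; but the rationalized version above is cleaner.
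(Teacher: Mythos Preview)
Your proposal is correct and follows essentially the same strategy as the paper: induct up the Postnikov tower of the target, the key input being the identification $\pi_i\Map_*(M,K(A,n))\cong \bar H^{n-i}(M;A)$, which vanishes rationally by the hypothesis on $M$. The only difference is packaging: you rationalize $X$ up front (hence your added nilpotence hypothesis and the Hilton--Mislin--Roitberg interchange), whereas the paper works with the ordinary Postnikov tower of $X$ and tensors with $\R$ only at the very end---precisely the alternative you sketch in your last paragraph. The paper's route sidesteps the rationalization bookkeeping and the nilpotence restriction on $X$; yours buys the cleaner statement that $\Map_*(M,K(V,n))$ is actually contractible for rational $V$, rather than merely having torsion homotopy.
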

\begin{proof}
We can take the Postnikow tower $\{X^n\}$ of $X$. This is a sequence of spaces $X^n$ such that $X=\lim_n X^n$, and we have a fibration
	$$K(\pi_nX,n)\into X^n\to X^{n-1}$$
for all $n.$ We will now use induction on $n$ to show for all $i$
	$$\pi_i(\Map_*(M,X^n))\otimes \R\cong 0.$$
We can use the fibration sequence
	$$\Map_*(M,K(\pi_nX,n))\into \Map_*(M,X^n)\to \Map_*(M, X^{n-1})$$
to see that the homotopy group $\pi_i \Map_*(M,X^n)$ will stabilize for every $i$ as $n$ gets larger. So proving $\pi_i(\Map_*(M,X^n))\otimes \R\cong 0$ for every $n$ will prove the Proposition.\\
Since $X^0$ is just a point, the start of induction is trivial.\\
Now suppose that we know $\pi_i(\Map_*(M,X^{n-1}))\otimes \R\cong 0$ for all $i$. Again, we have the fibration
	$$\Map_*(M,K(\pi_nX,n))\into \Map_*(M,X^n)\to \Map_*(M, X^{n-1}).$$
The long exact sequence of homotopy groups thereof gives
	$$\pi_i\Map_*(M,K(\pi_nX,n))\to \pi_i\Map_*(M,X^n)\to \pi_i\Map_*(M, X^{n-1}).$$
Since $\pi_i\Map_*(M, X^{n-1})$ is rationally trivial, it is enough to show that $\pi_i\Map_*(M,K(\pi_nX,n))$ for all $i$ as well: we have for given $i$
	\begin{eqnarray*}
		\pi_i\Map_*(M,K(\pi_nX,n))	&	\cong	&	[S^i, \Map_*(M,K(\pi_n X,n))]\\
															&	\cong	&	[\Sigma^iM,K(\pi_nX,n)]\\
															&	\cong	&	\bar H^{n-i}(M;\pi_nX),
	\end{eqnarray*}
and this is rationally trivial by assumption.
\end{proof}

\bibliography{mybib1}{}
\bibliographystyle{plain}

\end{document}